\documentclass[12pt]{article}
\usepackage{amsthm}
\usepackage{amsmath}
\usepackage{amssymb}
\usepackage{graphicx}
\usepackage{float}
\usepackage{amsfonts}
\usepackage{latexsym}
\usepackage{marvosym}
\usepackage{wasysym}

\usepackage[utf8]{inputenc}
\usepackage[english]{babel}

\newtheorem{theorem}{Theorem}[section]
\newtheorem{corollary}[theorem]{Corollary}
\newtheorem{lemma}[theorem]{Lemma}

\newtheorem{proposition}[theorem]{Proposition}

\setlength{\oddsidemargin}{0in} \setlength{\evensidemargin}{0in}
\setlength{\textwidth}{6.5in} \setlength{\headsep}{0pt}
\setlength{\headheight}{0pt} \setlength{\topmargin}{0in}
\setlength{\textheight}{9in}

\DeclareMathOperator{\boundary}{boundary}
\DeclareMathOperator{\interior}{interior}

\title{Intersection Graphs of Maximal Sub-polygons of $k$-Lizards}
\author{Caroline Daugherty \\ \small{Operations Research Center} \\ \small{Massachusetts Institute of Technology} \\ \small{Cambridge, MA} \and  Joshua D. Laison \\ \small{Mathematics Department} \\ \small{Willamette University}  \\ \small{Salem, OR } \and Rebecca Robinson \\ \small{Mathematical and Statistical Sciences Department} \\ \small{University of Colorado Denver} \\ \small{Denver, CO} \\ \and Kyle Salois \\ \small{Mathematics Department} \\ \small{Colorado State University}  \\ \small{Fort Collins, CO}}

\begin{document}

\maketitle
\begin{abstract}
We introduce $k$-maximal sub-polygon graphs ($k$-MSP graphs), the intersection graphs of maximal polygons contained in a polygon with sides parallel to a regular $2k$-gon.  We prove that all complete graphs are $k$-MSP graphs for all $k>1$; trees are $2$-MSP graphs; trees are $k$-MSP graphs for $k>2$ if and only if they're caterpillars; and $n$-cycles are not $k$-MSP graphs for $n>3$ and $k>1$.  We derive bounds for which $j$-cycles appear as induced subgraphs of $k$-MSP graphs.  As our main result, we construct examples of graphs which are $k$-MSP graphs and not $j$-MSP graphs for all $k>1$, $j>1$, $k \neq j$.
\end{abstract}

Keywords: intersection graph, MSP graph

Mathematics Subject Classifications: 05C62, 05C75, 68R10, 68U05

\section{Introduction}

Let $S$ be a set of geometric objects in the plane, and construct a graph $G$ with a vertex for every object in $S$, and an edge between two vertices if and only if their corresponding objects intersect.  We say that $S$ is an \textit{\textbf{intersection representation}} of $G$, and $G$ is an \textit{\textbf{intersection graph}}.    Intersection graphs of many different geometric objects have been studied \cite{bessy2020independent,hlinveny2001representing,mckee1999topics,pach2020almost}.

Recall that a \textit{\textbf{polyomino}} is the union of unit squares identified along their edges, as shown on the left in Figure~\ref{2-MSP-bull} \cite{golomb1996polyominoes}. In 1981 Berge, Chen, Chv\'{a}tal, and Seow first defined the intersection graph $G(P)$ of maximal axis-aligned rectangles in a polyomino $P$ \cite{berge1981combinatorial}.  They proved that if $P$ is semiconvex, then $G(P)$ is a comparability graph, and asked if $G(P)$ is always perfect.  Shearer answered this question in the affirmative in 1982 \cite{shearer}, and Maire provided an alternate proof in 1993 \cite{maire}.

In this paper we loosen the restriction of integer side lengths, and generalize from polygons with axis-aligned edges to polygons with edges chosen from $k$ distinct slopes, as follows.  We say the \textit{\textbf{direction}} of a line segment in the plane is the angle it makes with the $x$-axis.  For an integer $k>1$, an \textit{\textbf{allowed}} direction with respect to $k$ is a direction in the set $\{ i\pi/k \,|\, 0 \leq i < k \}.$

For an integer $k>1$, a \textit{\textbf{k-lizard}} is a polygon whose sides all have allowed directions with respect to $k$.  Equivalently, a $k$-lizard is a polygon whose sides are all parallel to the sides of a regular $2k$-gon.  This definition generalizes the \textit{\textbf{k-snakes}} defined in \cite{church2008snakes}, which also have integer side lengths. The $2$-snakes are polyominoes and the $3$-snakes are \textit{\textbf{polyiamonds}}, the union of unit equilateral triangles identified along their edges \cite{golomb1996polyominoes}.  In Section~\ref{open questions section} we briefly discuss our choice to investigate $k$-lizards instead of $k$-snakes.

The angle between two sides of a $k$-lizard is a member of the set $\{0, \pi/k, 2\pi/k, \ldots, (2k-1)\pi/k\}$.  We use the shorthand $\theta_i$ for the angle $i\pi/k$.  If the measure of angle $\alpha$ is $\theta_i$ and $i<k$, then $\alpha$ is convex, and if $i>k$ then $\alpha$ is reflex.  We don't consider angles of $\theta_k= \pi$ to be vertices of a polygon.

A convex $k$-lizard $S \subseteq L$ is a \textit{\textbf{maximal convex sub-polygon}}, or \textit{\textbf{scale}}, of $L$ if whenever $T$ is a convex $k$-lizard with $S \subseteq T \subseteq L$, $S=T$.  Figure~\ref{2-MSP-bull} shows a $2$-lizard with 5 scales, and a $3$-lizard with 6 scales.

A graph $G$ is a \textit{\textbf{$k$-maximal sub-polygon graph}} or \textit{\textbf{$k$-MSP graph}} if there exists a $k$-lizard $L$ and a bijection between the vertices of $G$ and the scales of $L$, such that two vertices are adjacent in $G$ if and only if the interiors of their corresponding scales intersect.  Figure~\ref{2-MSP-bull} shows examples of a 2-MSP graph and a 3-MSP graph.  Note that every $k$-lizard has exactly one corresponding $k$-MSP graph, but a $k$-MSP graph has many corresponding $k$-lizards.

\begin{figure}[ht]
\begin{center}
\includegraphics[width=.4\textwidth]{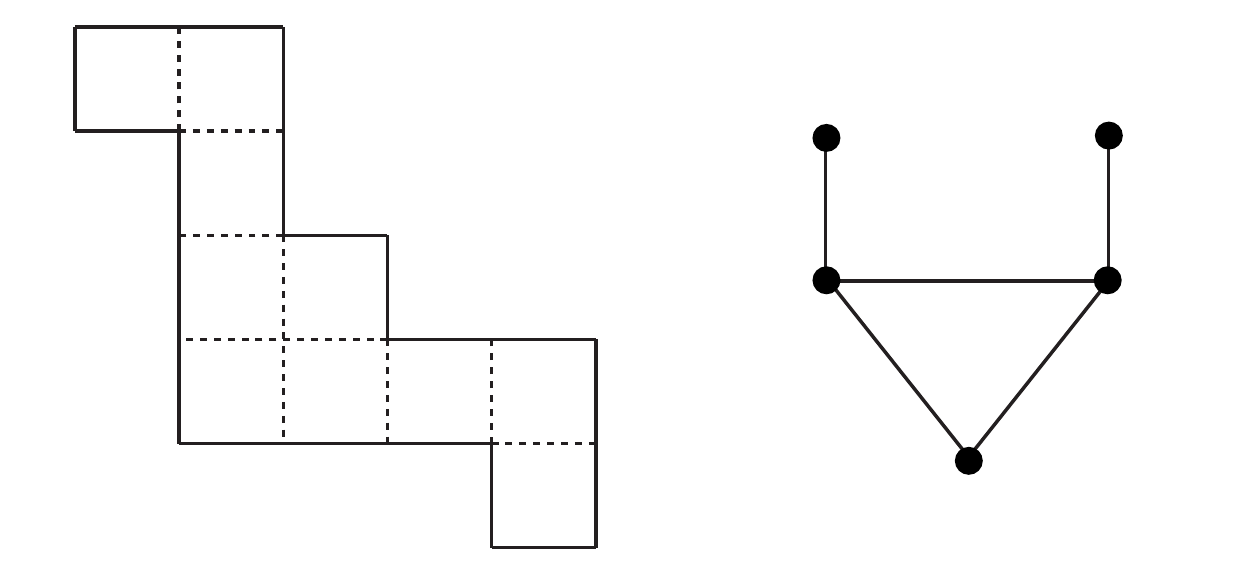} \includegraphics[width=.55\textwidth]{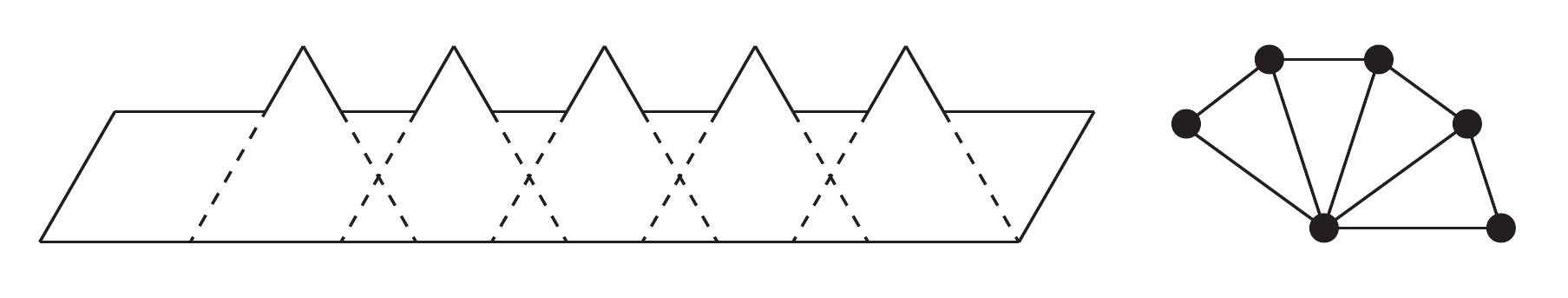}
\end{center}
\caption{A $2$-lizard and its $2$-MSP graph, and a $3$-lizard and its $3$-MSP graph.}
\label{2-MSP-bull}
\end{figure}

\section{Geometric Properties of $k$-Lizards}

In this section we derive geometric tools for analyzing $k$-lizards which we use to prove results about $k$-MSP graphs in Section~\ref{families section} and Section~\ref{separating examples}.

Suppose $L$ is a $k$-lizard.  We define a \textbf{\textit{proto-scale}} of $L$ to be a line segment in an allowed direction, contained in $L$ and containing a reflex angle of $L$ in its interior.

\begin{lemma}\label{Convex Subset Lemma}
Let $L$ be a $k$-lizard. Suppose $R$ is  a line segment contained in $L$ in an allowed direction whose intersection with $\boundary(L)$ is either empty, a vertex of $L$, or contained in an edge of $L$. Then $R$ is contained in at least one scale of $L$.  In particular, every proto-scale of $L$ is contained in a scale of $L$.
\end{lemma}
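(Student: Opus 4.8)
The plan is to show that any line segment $R$ satisfying the hypotheses can be enlarged, via a sequence of convex $k$-lizards containing it, until we reach a maximal one, which will be the desired scale. First I would observe that $R$ itself is a (degenerate) convex $k$-lizard contained in $L$: a segment has no reflex angles, and it trivially has all sides in allowed directions. So the family $\mathcal{F}$ of convex $k$-lizards $T$ with $R \subseteq T \subseteq L$ is nonempty. The heart of the argument is to produce a maximal element of $\mathcal{F}$; once we have one, say $S$, it is by definition a scale, and $R \subseteq S$, which proves the first statement. The ``in particular'' clause then follows immediately, since a proto-scale is by definition a segment in an allowed direction contained in $L$, and a reflex vertex of $L$ lying in the interior of the segment forces the segment's intersection with $\boundary(L)$ to avoid that vertex; one checks the intersection with $\boundary(L)$ is then of the permitted form (each endpoint lies on an edge or at a vertex of $L$, and the open segment meets $\boundary(L)$ only in reflex vertices of $L$, which do not obstruct convexity)—so the hypotheses of the lemma are met.

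To obtain the maximal element, I would argue by a compactness / ``greedy expansion'' argument rather than Zorn's lemma, to keep things concrete and to control the geometry. Since all the $T \in \mathcal{F}$ are contained in the fixed bounded region $L$, their areas are bounded above by $\operatorname{area}(L)$. Take a sequence $T_n \in \mathcal{F}$ whose areas approach $\sup_{T \in \mathcal{F}} \operatorname{area}(T)$. Because a convex $k$-lizard is a convex polygon with at most $k$ distinct edge directions, each $T_n$ has at most $k$ edges, hence at most $k$ vertices; passing to a subsequence, we may assume all $T_n$ have the same number of vertices $m \le k$, and that the $i$-th vertex of $T_n$ converges (the vertices live in the compact set $\overline{L}$). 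The limit $S$ of these polygons is a convex polygon, contained in $\overline{L}$, with edges only in allowed directions (a limit of segments in a fixed finite set of directions still has that direction, modulo degeneration of an edge to a point), containing $R$, and with area equal to the supremum. The main technical point is to check $S \subseteq L$ (not merely $\overline{L}$): here I would use the hypothesis that $R$ meets $\boundary(L)$ only in a controlled way to ensure $S$ does not ``bulge out'' past $\boundary(L)$—and more carefully, to rule out the degenerate possibility that the sup is attained only in the limit by a polygon touching the boundary improperly.

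Then I would verify $S$ is actually maximal in $\mathcal{F}$: if some convex $k$-lizard $T'$ had $S \subsetneq T' \subseteq L$, then $\operatorname{area}(T') \ge \operatorname{area}(S)$, with equality only if $T' = S$ (a strictly larger convex region has strictly larger area), contradicting that $\operatorname{area}(S)$ is the supremum. Hence $S$ is a scale of $L$ containing $R$.

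I expect the main obstacle to be the boundary bookkeeping in the compactness step: ensuring that the limiting polygon $S$ is genuinely a sub-polygon of $L$ in the paper's sense (contained in $L$, a $k$-lizard, not collapsing to lower dimension if $R$ is nondegenerate) rather than merely a convex subset of the closure. This is exactly where the hypothesis on $R \cap \boundary(L)$ is used, and it is the step most likely to need the explicit local structure of $k$-lizards near their reflex vertices—precisely the kind of local analysis the preceding ``proto-scale'' definition is setting up. An alternative, possibly cleaner, route that avoids limits altogether: expand $R$ in finitely many steps—at each step, if the current convex $k$-lizard $T$ is not yet maximal, translate one of its bounding lines (in one of the $k$ allowed directions) outward until it is blocked by $\boundary(L)$; since there are only $k$ directions, after at most $k$ such pushes one reaches a polygon that cannot be enlarged in any allowed direction, hence a scale. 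I would likely present this second version as the main proof and relegate the compactness remark to a sentence, as the finite-step expansion makes the ``$S \subseteq L$'' issue transparent at each stage.
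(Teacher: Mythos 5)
There is a genuine gap, and it sits at exactly the one step the paper's proof actually carries out. You seed the family $\mathcal{F}$ with $R$ itself, admitted as a ``degenerate'' convex $k$-lizard. But a scale is a genuine (two-dimensional) convex polygon, so your area-maximizing limit $S$ is a scale only if $\sup_{T\in\mathcal{F}}\operatorname{area}(T)>0$, i.e.\ only if $\mathcal{F}$ contains some nondegenerate convex $k$-lizard containing $R$; if all members of $\mathcal{F}$ were segments, your construction would return a segment, not a scale. You never establish this nondegeneracy, and it is precisely here --- not in checking that the limit stays inside $L$, which is automatic because $L$ is closed --- that the hypothesis on $R\cap\boundary(L)$ does its work. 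The paper's entire proof is this one step: thicken $R$ slightly in a second allowed direction to obtain a parallelogram $P$ with $R\subseteq P\subseteq L$ (the assumption that $R$ meets $\boundary(L)$ in at most a vertex or within a single edge is what guarantees such a thickening exists), and then invoke the fact that the convex $k$-lizard $P$ lies inside some maximal one. So your proposal expends its effort on the part the paper treats as immediate from the definition of ``maximal,'' and omits the part the paper actually proves. You even flag uncertainty about where the boundary hypothesis enters, but locate it in the wrong place.

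Two further points. Your compactness argument for the existence of a maximal element is essentially sound, and making it explicit is not unreasonable, since the paper's ``by definition $P$ is contained in a scale'' does quietly assume that every convex $k$-lizard in $L$ lies in a maximal one; note, though, that a convex polygon with $k$ allowed edge directions has up to $2k$ edges, not $k$. The alternative greedy argument is shakier: the claim that a polygon which ``cannot be enlarged in any allowed direction'' is maximal is true but needs proof (if $T\subsetneq T'\subseteq L$, some edge-supporting half-plane of $T$ can be relaxed to the corresponding half-plane of $T'$ while staying inside $T'$, hence inside $L$, and this strictly enlarges $T$ because that constraint is active along an entire edge); as written, ``hence a scale'' is an assertion rather than an argument, and the count should again be $2k$ pushes, with a justification that a side, once pushed to its limit, can never become pushable again.
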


\begin{proof}
Suppose $R$ is a line segment in an allowed direction whose intersection with $\boundary(L)$ is a vertex of $L$ or contained in an edge of $L$.  We can thicken $R$ by a small amount in a different allowed direction, to create a parallelogram $P$ containing $R$ and contained in $L$. Since $P$ is a convex $k$-lizard, by definition $P$ is contained in a scale of $L$, so $R$ is also contained in that scale.
\end{proof}

\begin{lemma} \label{reflex lemma}
Let $L$ be a $k$-lizard.  If two scales $A$ and $B$ of $L$ intersect in their interiors, then the polygon $A\cup B$ has at least one reflex angle.
\end{lemma}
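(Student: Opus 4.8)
The plan is to argue by contradiction using the maximality built into the definition of a scale: were $A\cup B$ to have no reflex angle, it would be a convex $k$-lizard strictly containing $A$, contradicting the maximality of the scale $A$.

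First I would pin down that $A\cup B$ really is a simple polygon, as the statement presumes. Since $A$ and $B$ meet in their interiors, fix a point $p\in\interior(A)\cap\interior(B)$. For any $q\in A\cup B$ the segment from $p$ to $q$ lies in $A$ or in $B$ by convexity, so $A\cup B$ is star-shaped about $p$; being a bounded polygonal region that is star-shaped, it is a simple polygon. Moreover, each edge of $A\cup B$ lies along an edge of $A$ or an edge of $B$ — near a non-vertex point of $\boundary(A\cup B)$ the boundary of the union coincides with a piece of $\boundary(A)$ or of $\boundary(B)$ — so every side of $A\cup B$ is in an allowed direction with respect to $k$. Degenerate overlaps, such as a side of $A$ collinear with a side of $B$, or $\boundary(A)$ and $\boundary(B)$ crossing at a common vertex, do not introduce any new edge directions and do not create a hole.

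Next, suppose toward a contradiction that $A\cup B$ has no reflex angle. By the paper's conventions every vertex of $A\cup B$ has interior angle $\theta_i$ for some $i\neq k$, and having no reflex angle means $i<k$ at each vertex; hence $A\cup B$ is a convex polygon, and combined with the previous step it is a convex $k$-lizard. Then $A\subseteq A\cup B\subseteq L$ forces $A=A\cup B$ by the definition of the scale $A$, so $B\subseteq A$. Applying the same definition to $B$ (via $B\subseteq A\subseteq L$ with $A$ a convex $k$-lizard) gives $A=B$. But adjacency in a $k$-MSP graph is declared between two \emph{distinct} scales, so $A\neq B$ — a contradiction. Therefore $A\cup B$ has at least one reflex angle.

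The one place that needs care, and which I expect to be the main obstacle, is the first step: confirming that the union of two overlapping convex polygons is a genuine simple polygon whose sides inherit allowed directions. Once that is in hand, the rest of the argument is essentially a single invocation of the maximality in the definition of a scale, and Lemma~\ref{Convex Subset Lemma} is not even needed here.
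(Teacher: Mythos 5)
Your proof is correct and follows essentially the same route as the paper's: if $A\cup B$ had no reflex angle it would be a convex $k$-lizard containing both $A$ and $B$, and maximality of the scales forces $A=B$, a contradiction. The extra care you take to verify that $A\cup B$ is a simple polygon with sides in allowed directions is a reasonable elaboration of a step the paper leaves implicit, but it does not change the argument.
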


\begin{proof}
If the polygon $A\cup B$ has no reflex angles, then it's convex, and so by definition $A\cup B$ is contained in some scale $C$. This contradicts the fact that $A$ and $B$ are maximal convex $k$-lizards in $L$.
\end{proof}

\begin{figure}[ht]
\begin{center}
\includegraphics[width=.35\textwidth]{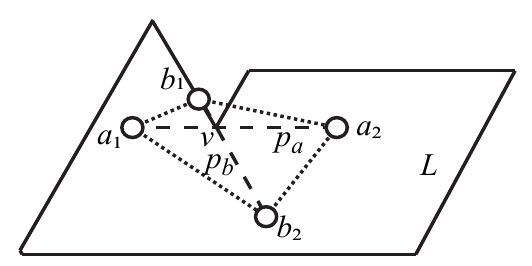}
\end{center}
\caption{Distinct proto-scales in the proof of Lemma~\ref{proto-scales-lemma}.}
\label{extending-scale-boundary} \label{distinct-scales-fig}
\end{figure}

\begin{lemma}
For a proto-scale $x$ and a scale $S$, if $x \subset S$ then $x \subset \boundary{(S)}$.
\end{lemma}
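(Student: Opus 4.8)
The plan is to argue directly, using a supporting line of the convex set $S$ at the reflex vertex that $x$ carries. By definition of a proto-scale, there is a vertex $v$ of $L$ with a reflex angle lying in the (relative) interior of the segment $x$. Since $x \subseteq S$ we have $v \in S$, so it suffices to show $v \in \boundary(S)$ and then propagate this along all of $x$.

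First I would observe that $v \notin \interior(S)$. Indeed, $S$ and $L$ are closed regions with $S \subseteq L$, so $\interior(S) \subseteq \interior(L)$; but $v$ is a vertex of $L$, so no neighborhood of $v$ is contained in $L$ and hence $v \notin \interior(L) \supseteq \interior(S)$. Therefore $v \in \boundary(S)$. Next, since $S$ is convex, there is a line $\ell$ through $v$ with $S$ contained in one of the closed half-planes $H$ bounded by $\ell$ (a supporting line of $S$ at the boundary point $v$). Now $x \subseteq S \subseteq H$, and $v$ lies in the relative interior of the segment $x$. Writing the endpoints of $x$ as $a$ and $b$, the signed-distance-to-$\ell$ function is affine, nonnegative at $a$ and at $b$, and a positive convex combination of those values equals its value at $v$, which is $0$; hence it vanishes at both $a$ and $b$, so $x \subseteq \ell$. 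Finally, $\ell \cap S \subseteq \boundary(S)$, since any point of $S$ lying on $\ell$ has every neighborhood meeting both sides of $\ell$ and so cannot be interior to $S$. Combining, $x \subseteq \ell \cap S \subseteq \boundary(S)$, which is the claim.

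I do not expect a genuine obstacle here; the content is elementary convex geometry together with the observation relating $\interior(S)$ to $\interior(L)$. The only points requiring care are the facts used above: that a full-dimensional convex subset of a closed region has interior contained in the region's interior, and that a segment lying in a convex set with a relative-interior point on the boundary lies entirely on the boundary (equivalently, lies in a supporting line). If one prefers to avoid the signed-distance computation, the last step can instead be phrased using convexity of the half-plane $H$: were some endpoint of $x$ strictly inside $H$, extending the segment slightly past $v$ would leave $H$ while staying on the line through $x$, contradicting $x \subseteq H$ together with $v$ being interior to $x$.
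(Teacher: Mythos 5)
Your proof is correct and follows essentially the same route as the paper's: both first place the reflex vertex $v$ on $\boundary(S)$ via $\interior(S)\subseteq\interior(L)$, and then use convexity of $S$ together with the fact that $v$ lies in the relative interior of the segment to force all of $x$ onto $\boundary(S)$. The only difference is cosmetic --- you package the convexity step as a supporting-line argument, whereas the paper argues by contradiction that a point of $x$ in $\interior(S)$ would push the points of $x$ beyond $v$ outside of $S$.
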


\begin{proof}
Suppose $x$ contains the reflex angle $q$ of $L$ in its interior.  Since $q \in x \subset S \subset L$ and $q \in \boundary(L)$, $q \in \boundary(S)$.  Suppose by way of contradiction that $x \not\subset \boundary(S)$.  Then $x$ contains a point $r$ in the interior of $S$.  Since $q$ is in the interior of $x$ and $S$ is convex, points in $x$ on the other side of $r$ from $q$ are not in $S$, contradicting our hypothesis.
\end{proof}

\begin{lemma}
\label{proto-scales-lemma}
Suppose $v$ is a vertex of a $k$-lizard $L$ with interior angle measure $\theta_{k+j}$, for an integer $j \geq 1$. Then there is a point $x$ in $L$ near $v$ contained in the interiors of at least $j+1$ scales.
\end{lemma}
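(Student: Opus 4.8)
The plan is to exhibit $j{+}1$ pairwise distinct scales of $L$ together with a single point $x$ arbitrarily close to $v$ lying in the interior of every one of them. Note first that, since the interior angle $\theta_{k+j}$ at $v$ is one of $\theta_0,\dots,\theta_{2k-1}$, we have $k+j\le 2k-1$, i.e. $1\le j\le k-1$; in particular $\theta_j=j\pi/k<\pi$. Choose coordinates so that, near $v$, $L$ is the wedge $W$ of opening $\theta_{k+j}$ at $v$, bounded by the ray from $v$ at angle $0$ and the ray from $v$ at angle $\theta_{k+j}$; the first of these two edges of $L$ has direction $\theta_0=0$ and the second has direction $\theta_{k+j}-\pi=\theta_j$. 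For each $i\in\{0,1,\dots,j\}$ let $\ell_i$ be the line through $v$ in direction $\theta_i$. Its two rays leave $v$ at angles $\theta_i\in[0,\theta_j]$ and $\theta_i+\pi\in[\pi,\theta_{k+j}]$, both inside $\overline W$, so a short segment $P_i$ of $\ell_i$ centred at $v$ lies in $L$ and contains the reflex vertex $v$ in its interior; thus $P_i$ is a proto-scale, and by Lemma~\ref{Convex Subset Lemma} it lies in a scale $S_i$ of $L$. (For $i=0$ and $i=j$ the segment $P_i$ runs partly along an edge of $L$, which is exactly the configuration the hypothesis of Lemma~\ref{Convex Subset Lemma} is written to cover.)

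The scales $S_0,\dots,S_j$ are pairwise distinct: if $S_i=S_{i'}$ with $i\neq i'$, this scale would contain $P_i$ and $P_{i'}$, hence their convex hull; but $P_i$ and $P_{i'}$ are segments along the distinct lines $\ell_i\neq\ell_{i'}$ crossing transversally at a point interior to each, so their convex hull is a quadrilateral whose diagonals meet at $v$, forcing $v\in\interior(S_i)\subseteq\interior(L)$ and contradicting $v\in\boundary(L)$.

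It remains to locate the common point, and this is the crux. I claim that in a neighbourhood of $v$ each $S_i$ is exactly the closed half-disk bounded by $\ell_i$ on the side lying inside $W$, i.e. the sector of directions $[\theta_i,\theta_i+\pi]$ at $v$. Indeed, the set of directions $u$ with $v+tu\in S_i$ for all small $t>0$ (the tangent cone of the convex polygon $S_i$ at the boundary point $v$) is a closed arc of angular length at most $\pi$, and because $v$ is interior to the segment $P_i\subseteq S_i$ it contains the two antipodal directions $\theta_i$ and $\theta_i+\pi$; hence it is one of the two semicircular arcs joining them, and since it must be contained in the local cone $[0,\theta_{k+j}]$ of $L$ at $v$, whose opening is less than $2\pi$, it must be the arc $[\theta_i,\theta_i+\pi]$ (using $0\le\theta_i\le\theta_j$ and $\pi\le\theta_i+\pi\le\theta_{k+j}$). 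This arc has opening exactly $\pi$, so $v$ is not a vertex of $S_i$ but lies in the relative interior of an edge of $S_i$ along $\ell_i$; hence $S_i$ coincides with that half-disk near $v$. Now pick any $\gamma\in(\theta_j,\pi)$ (nonempty since $\theta_j<\pi$) and let $x$ be the point at angle $\gamma$ from $v$ at sufficiently small radius. For every $i\in\{0,\dots,j\}$ we have $\theta_i\le\theta_j<\gamma<\pi\le\theta_i+\pi$, so $x\in\interior(S_i)$; thus $x$ is a point near $v$ contained in the interiors of the $j{+}1$ distinct scales $S_0,\dots,S_j$.

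The main obstacle is precisely this last step: showing that each scale, once localized at $v$, is a \emph{full} half-disk on a prescribed side of $\ell_i$ rather than merely some sub-sector of $W$, so that one point $x$ can be forced into all $j{+}1$ interiors simultaneously. The tangent-cone computation, together with the convention that straight angles are not counted as vertices, is what pins this down; everything else (counting the admissible directions $\theta_0,\dots,\theta_j$, the bound $j\le k-1$, and distinctness via the crossing-segments convex hull) is routine once the local picture at $v$ is set up.
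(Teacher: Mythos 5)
Your proof is correct and follows essentially the same route as the paper's: you build $j+1$ proto-scales at $v$ in the allowed directions $\theta_0,\dots,\theta_j$, obtain distinct scales via the crossing-diagonals quadrilateral argument, and intersect the resulting half-planes near $v$. The only difference is one of rigor: the paper simply asserts that the half-planes bounded by the edges of the scales through $v$ have a common intersection whose nearby points lie in all the interiors, whereas your tangent-cone computation identifies each half-plane explicitly as the sector $[\theta_i,\theta_i+\pi]$ and exhibits a common direction $\gamma\in(\theta_j,\pi)$, thereby filling in the small detail that this common intersection has nonempty interior.
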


\begin{proof}
Let $p_a$ and $p_b$ be proto-scales containing $v$ with distinct directions, with endpoints $a_1$, $a_2$, $b_1$, and $b_2$, respectively, as shown on the right in Figure~\ref{distinct-scales-fig}.  Construct the quadrilateral $Q=a_1b_1a_2b_2$.  Since $p_a$ and $p_b$ are diagonals of $Q$ and $v$ is the intersection of $p_a$ and $p_b$, $v$ is in the interior of $Q$.  Since $v$ is on the boundary of $L$, $Q$ contains points outside of $L$.  Thus scales containing $p_a$ and $p_b$ are distinct.

There are $j+1$ proto-scales at $v$, and each one is contained in a scale of $L$ by Lemma~\ref{Convex Subset Lemma}.  So $v$ is in the intersection of at least $j+1$ scales $S_1$, $\ldots$, $S_{j+1}$.  By construction, each scale $S_i$ contains $v$ in the interior of one of its edges.  Consider the half-planes defined by these edges and containing the scales $S_1$, $\ldots$, $S_{j+1}$.  Since all of these half-planes contain $v$ in their boundary, they have a common intersection, which also contains $v$.  Points in this intersection near $v$ are contained in the interiors of all the scales $S_1$, $\ldots$, $S_{j+1}$.
\end{proof}

\begin{lemma} \label{two-proto-scales-lemma}
Suppose two sides $s_1$ and $s_2$ of a $k$-lizard $L$ lie on the same line $\ell$, and the interior of the line segment $s_3$ between $s_1$ and $s_2$ on $\ell$ is in $\interior(L)$.  Say the endpoint of $s_1$ on $s_3$ is $v_1$, the endpoint of $s_2$ on $s_3$ is $v_2$, and the interior angle measures of $v_1$ and $v_2$ are $\theta_{k+j}$ and $\theta_{k+m}$, for integers $j \geq 1$ and $m \geq 1$. Then there is a point $x$ in $L$ near $v_1$ contained in the interiors of at least $jm+1$ scales.
\end{lemma}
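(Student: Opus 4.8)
The plan is to produce $jm+1$ pairwise distinct scales whose interiors all contain a single point near $v_1$, built in a product fashion from the $j$ proto-scales at $v_1$ not along $\ell$ and the $m$ proto-scales at $v_2$ not along $\ell$. First I would normalise coordinates so that $\ell$ is horizontal, $v_1$ lies left of $v_2$, and $s_3=[v_1,v_2]$. Since $\interior(s_3)\subseteq\interior(L)$, there is $\delta_0>0$ with the $\delta_0$-neighbourhood of $s_3$ contained in $L$; moreover, exactly as in the proof of Lemma~\ref{proto-scales-lemma}, near $v_1$ the polygon $L$ is a closed half-plane bounded by $\ell$ together with a wedge of angular width $\theta_j$ adjacent to $\ell$ on the side of $s_3$, cut off by the second side $t_1$ of $L$ at $v_1$; symmetrically near $v_2$, with wedge width $\theta_m$ and second side $t_2$. (Reflecting in $\ell$ if needed, assume the wedge at $v_1$ is above $\ell$; the position of the wedge at $v_2$ gives two analogous subcases that affect only the construction step below.) As in Lemma~\ref{proto-scales-lemma}'s setup there are, up to identifying a direction with its reverse, $j$ allowed directions $\alpha_1,\dots,\alpha_j$ strictly between $\ell$ and $t_1$; let $p_a$ be a short proto-scale through $v_1$ in direction $\alpha_a$, and likewise let $p'_b$ be a short proto-scale through $v_2$ in one of the $m$ directions between $\ell$ and $t_2$. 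Finally let $R_0$ be the maximal chord of $L$ along $\ell$; since $\interior(s_3)\subseteq\interior(L)$, $R_0$ contains $v_1$ and $v_2$ in its interior, so $R_0$ is also a proto-scale.

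For the construction, by Lemma~\ref{Convex Subset Lemma} the proto-scale $R_0$ lies in a scale $C_0$, and for each pair $(a,b)\in\{1,\dots,j\}\times\{1,\dots,m\}$ I would exhibit a small convex $k$-lizard $T_{a,b}\subseteq L$ containing both $p_a$ and $p'_b$, so that $T_{a,b}$ lies in a scale $S_{a,b}$ with $p_a,p'_b\subseteq S_{a,b}$. Concretely, after shrinking $p_a$ and $p'_b$ so that their ends above and below $\ell$ can be taken at two common heights $h^+,h^-<\delta_0$, one lets $T_{a,b}$ be the convex quadrilateral with one pair of opposite sides horizontal (at heights $h^+$ and $-h^-$) and the other two sides on the lines of $p_a$ and of $p'_b$; a direct check using the local description of $L$ at $v_1$, along $s_3$, and at $v_2$ shows $T_{a,b}\subseteq L$. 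This produces the $jm+1$ scales $C_0$ and $S_{a,b}$.

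It remains to show these scales are pairwise distinct and that their interiors meet near $v_1$. If $S_{a,b}=S_{a',b'}$ with $a\neq a'$, this scale contains the two proto-scales $p_a$ and $p_{a'}$, both containing $v_1$ in their interiors, so their convex hull, lying inside the scale and hence inside $L$, contains a neighbourhood of $v_1\in\boundary(L)$ --- a contradiction; the case $b\neq b'$ is identical at $v_2$, and $C_0\neq S_{a,b}$ follows the same way from $p_a$ and $R_0$. For the common point I would repeat the argument in the proof of Lemma~\ref{proto-scales-lemma}: each of the $jm+1$ scales contains a proto-scale through $v_1$ (namely $p_a$ for $S_{a,b}$, and $R_0$ for $C_0$), and since that scale is convex with $v_1$ on its boundary and contains both rays of the proto-scale at $v_1$, its only supporting line at $v_1$ is the line of the proto-scale; hence near $v_1$ the scale equals a fixed half-plane $H$ through $v_1$, namely $H_a$ bounded by the line in direction $\alpha_a$ and containing the $\ell$-direction toward $v_2$ (for $S_{a,b}$), respectively the half-plane $H_0$ below $\ell$ (for $C_0$). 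All of $H_0,H_1,\dots,H_j$ have $v_1$ on their boundary, and an angular computation shows $\left(\bigcap_{a=1}^{j}H_a\right)\cap H_0$ is a cone at $v_1$ of angular width $\theta_{k-j}>0$; any point $x$ in the relative interior of this cone sufficiently close to $v_1$ lies in $\interior(L)$ and in the interior of every $S_{a,b}$ and of $C_0$. The step I expect to be the main obstacle is the verification that $T_{a,b}\subseteq L$, i.e.\ that $p_a$ and $p'_b$ genuinely lie in a common scale: this is where the hypothesis that $s_3$ is a bridge with reflex ends of measures $\theta_{k+j}$ and $\theta_{k+m}$ is used quantitatively, and it requires coordinating the lengths of the proto-scales with $\delta_0$ and with the two wedge widths across the possible orientations of the wedges at $v_1$ and $v_2$.
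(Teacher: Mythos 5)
Your proposal is correct and follows essentially the same route as the paper: pair each of the $j$ non-$\ell$ proto-scale directions at $v_1$ with each of the $m$ at $v_2$, join each pair by a thin quadrilateral contained in $L$ (the paper's ``parallelogram $R$''), invoke Lemma~\ref{Convex Subset Lemma} to place each in a scale, use the crossing-diagonals argument of Lemma~\ref{proto-scales-lemma} for distinctness, add the one scale along $\ell$, and intersect the supporting half-planes at $v_1$ to find the common point. You are merely more explicit than the paper about the containment $T_{a,b}\subseteq L$ and the orientation of the wedges, which the paper glosses over with ``choose $p_1$ and $p_2$ small enough.''
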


\begin{figure}[ht]
\begin{center}
\includegraphics[width=.3\textwidth]{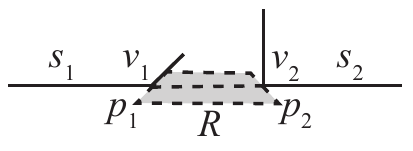}
\end{center}
\caption{Constructing the parallelogram $R$ in the proof of Lemma~\ref{two-proto-scales-lemma}.}
\label{two-proto-scales-figure}
\end{figure}

\begin{proof}
Suppose the hypotheses of the lemma, as shown in Figure~\ref{two-proto-scales-figure}.  By the proof of Lemma~\ref{proto-scales-lemma}, there are proto-scales at $v_1$ in $j+1$ different directions and proto-scales at $v_2$ in $m+1$ different directions.  One of the directions at $v_1$ is along $\ell$ and one of the directions at $v_2$ is along $\ell$.  Let $p_1$ be a proto-scale at $v_1$ not along $\ell$ and let $p_2$ be a proto-scale at $v_2$ not along $\ell$.  Since $v_1v_2$ is in an allowed direction, we may choose $p_1$ and $p_2$ small enough so that a parallelogram $R$ with two sides $p_1$ and $p_2$ and two sides parallel to $v_1v_2$ is contained in $L$.  By Lemma~\ref{Convex Subset Lemma}, $R$ is contained in a scale of $L$, and by the proof of Lemma~\ref{proto-scales-lemma}, this scale is distinct from scales formed from different proto-scales at $v_1$ and $v_2$.  Thus there are at least $jm$ of these scales, and at least one additional scale with side along $\ell$ containing $v_1$.  We can find a point near $v_1$ contained in all of them.
\end{proof}

\begin{lemma}
\label{Ray Lemma}
Given three vertices $a$, $b$, and $c$ in a $k$-MSP graph $G$ with corresponding scales $A$, $B$, and $C$ such that $abc$ forms an induced path in $G$, if from every point on $\boundary(B-A)$ there exists a ray in an allowed direction not along $\boundary(B-A)$ that intersects $A$, then there is a scale $D$ intersecting the interiors of $A$, $B$, and $C$.
\end{lemma}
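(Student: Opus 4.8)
The plan is to reduce everything to a single segment and then invoke Lemma~\ref{Convex Subset Lemma}. Concretely, it suffices to produce a line segment $R$ in an allowed direction, contained in $L$, whose intersection with $\boundary(L)$ is empty, a single vertex, or contained in a single edge, and which meets $\interior(A)$, $\interior(B)$, and $\interior(C)$. Given such an $R$, Lemma~\ref{Convex Subset Lemma} puts $R$ inside some scale $D$, and $D$ then automatically meets all three interiors. So the task is to build $R$.

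First I would extract what the induced path gives. Since $ab$ and $bc$ are edges and $ac$ is not, $\interior(A)\cap\interior(B)\neq\emptyset$, $\interior(B)\cap\interior(C)\neq\emptyset$, and $\interior(A)\cap\interior(C)=\emptyset$. Because $A$ and $C$ are convex polygons with nonempty interior, $A=\overline{\interior(A)}$, so a point in $A\cap\interior(C)$ would have a neighborhood inside $\interior(C)$ meeting $\interior(A)$; hence in fact $A\cap\interior(C)=\emptyset$ and symmetrically $C\cap\interior(A)=\emptyset$. In particular $\interior(B)\cap\interior(C)$ is a nonempty open subset of $B-A$, and $\interior(A)\cap\interior(B)$ is a nonempty open subset of $B$ disjoint from $C$; maximality of scales also gives $A\not\subseteq B$, $B\not\subseteq A$, and $C\not\subseteq B$.

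Next I would locate a point of $\boundary(B-A)$ lying inside $\interior(C)$ and feed it to the hypothesis. Since $\interior(B)\cap\interior(C)\neq\emptyset$ and $C\not\subseteq B$, a straight segment from a point of $\interior(B)\cap\interior(C)$ to a point of $\interior(C)\setminus B$ stays in $\interior(C)$ and crosses $\boundary(B)$; let $y$ be a crossing point, so $y\in\boundary(B)\cap\interior(C)$. Since $A\cap\interior(C)=\emptyset$ we have $y\notin A$, and $y\in\boundary(B)$, so $y\in\boundary(B-A)$. By hypothesis there is an allowed-direction ray $\varrho$ from $y$, not running along $\boundary(B-A)$ (which near $y$ is just $\boundary(B)$), that meets $A$. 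In the favorable case --- $\varrho$ enters $\interior(B)$ at $y$, and reaches $\interior(A)$ while still inside $\interior(B)$, crossing $\boundary(A)$ transversally at a point $z\in\interior(B)$ --- a short subsegment of $\varrho$ straddling $z$ together with the initial portion of $\varrho$ near $y$ is an allowed-direction segment contained in $\interior(B)\subseteq\interior(L)$ that meets $\interior(C)$ (near $y$), $\interior(B)$ (throughout), and $\interior(A)$ (past $z$); this is the required $R$.

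The obstacle --- and the reason the hypothesis is stated for \emph{every} point of $\boundary(B-A)$ rather than one --- is ruling out the unfavorable alternatives: $\varrho$ could point out of $B$ at $y$, or leave $B$ before it meets $A$, or $\varrho$ could meet $A$ only in a degenerate tangency or along an edge of $A$. To deal with these I would (i) set aside the measure-zero tangency degeneracies by a limiting argument, and (ii) exploit connectedness of $\boundary(B-A)$: the arc $\boundary(B)\cap\interior(C)$ is connected, and $\boundary(B-A)$ also contains the arc $\boundary(A)\cap\interior(B)$, from whose points \emph{any} valid ray automatically crosses into $\interior(A)$. Tracking the valid allowed directions as the base point sweeps along $\boundary(B-A)$ from the $\boundary(B)\cap\interior(C)$ arc toward the $\boundary(A)\cap\interior(B)$ arc should force a base point (or a point $w\in\boundary(A)\cap\interior(B)$ reachable from $\interior(C)$ along an allowed segment inside $L$) at which the construction of the previous paragraph succeeds --- possibly after allowing $R$ to leave $B$ into other parts of $L$, in which case one must additionally check $R\cap\boundary(L)$ stays inside one edge. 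Making this sweeping/connectedness step precise is the part I expect to require the most care.
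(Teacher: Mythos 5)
Your ``favorable case'' is precisely the paper's entire proof: it picks a point $p\in\boundary(B)\cap\interior(C)$, notes $p\notin A$ so $p\in\boundary(B-A)$, applies the hypothesis to get a ray $r$ meeting $\interior(A)$, chooses $q\in r\cap\interior(A)$ with the segment $pq$ in $\interior(L)$, and feeds $pq$ to Lemma~\ref{Convex Subset Lemma} to obtain the scale $D$. The degenerate alternatives you flag (the ray exiting $B$ or $L$ before reaching $A$, or meeting $A$ only tangentially) are simply assumed away in the paper --- it asserts without further argument that such a $q$ exists --- so your main line matches the published proof, and the additional sweeping/connectedness machinery you sketch, though left unfinished, is extra caution rather than a required missing step relative to what the paper itself provides.
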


\begin{proof}
Let scales $A$, $B$, and $C$ be in a $k$-lizard $L$ as described above. Since $\interior(B)$ and $\interior(C)$ intersect, let $p$ be a point on the boundary of $B$ in the interior of $C$, as shown in Figure~\ref{extending-scale}. Since $A$ and $C$ are not adjacent in $G$, $p \not\in A$.  Hence $p \in \boundary(B-A)$, and so by hypothesis there is a ray $r$ from $p$ in an allowed direction, not along $\boundary(B-A)$, that intersects the interior of $A$.  Choose a point $q \in r \cap \interior(A)$ such that the line segment $pq$ is contained in the interior of $L$.  Since $r$ is not along $\boundary(B-A)$, $pq$ also contains points in $\interior(B)$. Since $pq$ is in an allowed direction and contained in the interior of $L$, by Lemma~\ref{Convex Subset Lemma} it must be contained in some scale $D$ of $L$, and since $pq$ intersects the interiors of $A$, $B$, and $C$, $D$ intersects the interiors of $A$, $B$, and $C$.
\end{proof}

\begin{figure}[ht]
\begin{center}
\includegraphics[width=.43\textwidth]{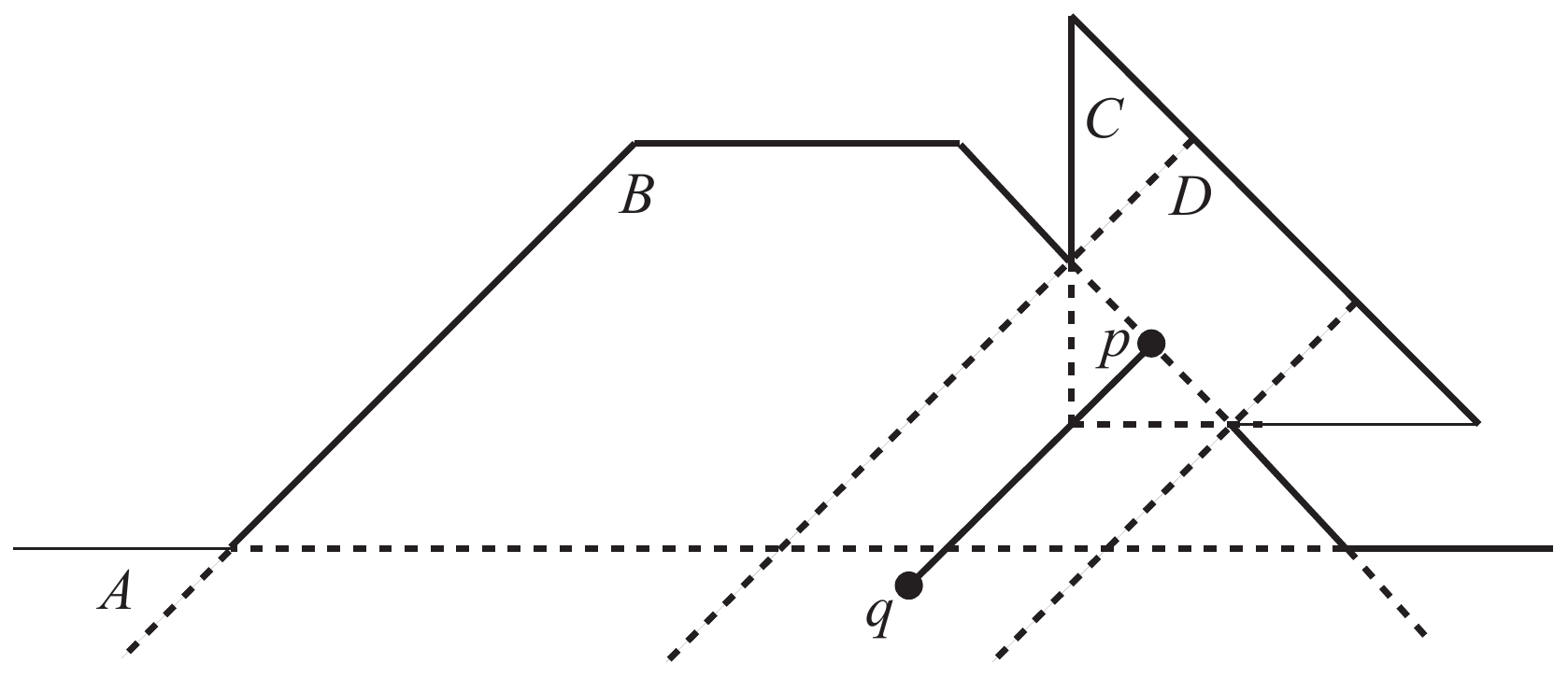} \hspace*{1cm} \includegraphics[width=.47\textwidth]{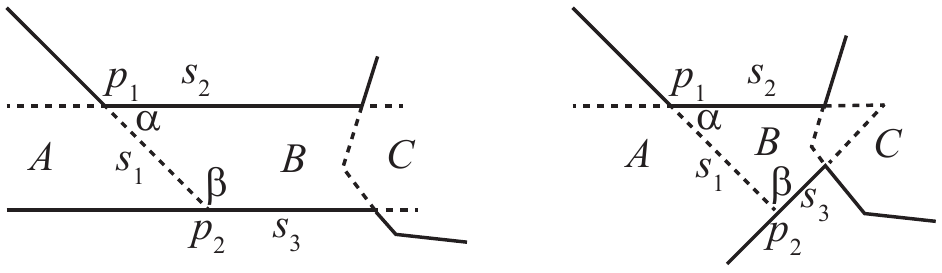}
\end{center}
\caption{On the left, constructing the scale $D$ in the proof of Lemma~\ref{Ray Lemma}.  In the center and right, extending the line segment $s_1$ to form the angle $\beta$ in the proof of Lemma \ref{parallel sides lemma}.}
\label{extending-scale}
\end{figure}

Suppose $A$ and $B$ are two scales of a $k$-lizard $L$ that intersect in their interiors.  Since $A$ and $B$ are convex, $\boundary(A)$ and $\boundary(B)$ intersect in exactly two points $p_1$ and $p_2$.  We say the sides of $B$ \textit{\textbf{incident}} with $A$ are the sides containing $p_1$ and $p_2$, and not contained in $A$.

\begin{lemma}
\label{parallel sides lemma}
Let $L$ be a $k$-lizard, and $A$, $B$, and $C$ be scales in $L$ with corresponding vertices $a$, $b$, and $c$ in the $k$-MSP graph $G$ of $L$. Suppose $abc$ is a path in $G$, and no other vertex in $G$ is adjacent to both $a$ and $b$.  Then the sides of $B$ incident with $A$ are parallel.
\end{lemma}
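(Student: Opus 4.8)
The plan is to argue by contradiction. Suppose the two sides $s_1$ and $s_2$ of $B$ incident with $A$, through the points $p_1$ and $p_2$ of $\boundary(A)\cap\boundary(B)$, are not parallel. Since $abc$ is a path, $c$ is adjacent to $b$; since no other vertex of $G$ is adjacent to both $a$ and $b$, $c$ is not adjacent to $a$, so $\interior(A)\cap\interior(C)=\emptyset$ while $\interior(B)\cap\interior(C)\neq\emptyset$. In particular there is a point $p$ on $\boundary(B)\cap\interior(C)$ with $p\notin A$, and such a $p$ lies on the arc $\gamma$ of $\boundary(B)$ that runs from $p_1$ to $p_2$ outside $A$. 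I will also write $\delta$ for the arc of $\boundary(A)$ from $p_1$ to $p_2$ inside $\interior(B)$, so that $\boundary(B-A)=\gamma\cup\delta$ and $\gamma$ leaves $p_1$ along $s_1$ and reaches $p_2$ along $s_2$. Everything is aimed at producing a scale $D\neq A,B$ whose interior meets both $\interior(A)$ and $\interior(B)$; its vertex is then a common neighbor of $a$ and $b$ other than $a$ and $b$, contradicting the hypothesis.

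The first step is a local analysis at $p_1$, and symmetrically at $p_2$. By Lemma~\ref{reflex lemma}, $A\cup B$ has a reflex angle, and since away from $p_1$ and $p_2$ the boundary of $A\cup B$ follows the convex curves $\boundary(A)$ or $\boundary(B)$, this reflex angle sits at $p_1$ or $p_2$. Near $p_1$, $A$ and $B$ fill two overlapping half-planes whose intersection is a wedge of angle $\theta_i$ with $1\le i\le k-1$ --- the angle between the sides of $A$ and $B$ through $p_1$ --- and the interior angle of $A\cup B$ there is $\theta_{2k-i}\ge\theta_{k+1}$. If $L$ is ``loose'' at $p_1$ (meaning $p_1\in\interior(L)$, or $p_1\in\boundary(L)$ but the interior angle of $L$ at $p_1$ is strictly larger than that of $A\cup B$) and moreover $i\ge 2$, then some allowed direction lies strictly inside the overlap wedge and the line through $p_1$ in that direction continues into the exterior wedge while staying in $L$; by Lemma~\ref{Convex Subset Lemma} this segment lies in a scale $D$, which then meets $\interior(A)\cap\interior(B)$ as well as the complement of $A\cup B$, so $D\neq A,B$ and its vertex is the sought common neighbor. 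If instead $L$ is ``tight'' at $p_1$, so that $p_1$ is a reflex vertex of $L$ of interior angle $\theta_{2k-i}$, and $i\le k-2$, then Lemma~\ref{proto-scales-lemma} gives a point near $p_1$ in at least three scales, two of which are $A$ and $B$ (each contains a proto-scale along its side through $p_1$), so the third scale again gives the common neighbor. Hence the hypothesis forces, at each of $p_1$ and $p_2$, either $i=1$ or ($L$ tight there with $i=k-1$): a rigid local picture in which the angles that $\delta$ makes with $s_1$ and with $s_2$ are determined.

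In that rigid picture $s_1$ and $s_2$ are reflections of one another across $\delta$, which --- when $\delta$ is a single segment --- already forces $s_1\parallel s_2$, contrary to assumption; this settles the case $k=2$. Otherwise I would exploit either a vertex of $A$ lying on $\delta$ or the vertex $C$ together with the angle $\beta$ obtained by extending $s_1$ past $p_1$ into $A$ (Figure~\ref{extending-scale}, center and right): $\beta$ is a positive multiple of $\pi/k$, and using this and the reflection symmetry one shows that from the point $p\in\boundary(B)\cap\interior(C)$ outside $A$ there is a ray in an allowed direction, not along $\boundary(B-A)$, meeting $\interior(A)$. That is exactly the property the proof of Lemma~\ref{Ray Lemma} uses; following that proof (and extending the segment it builds slightly past $p$ into $\interior(C)\setminus B$, so that it stays in $\interior(L)$ and so that the scale containing it is not $B$) yields a scale $D$ meeting $\interior(A)$, $\interior(B)$ and $\interior(C)$, and its vertex is the common neighbor we want.

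The step I expect to be the real obstacle is managing the many degenerate configurations that sit underneath the above: $p_1$ and $p_2$ may each lie in the relative interior of sides of $A$ and of $B$ or may be vertices of one or both polygons, the arcs $\gamma$ and $\delta$ may contain further vertices (so $\delta$ need not be a single segment and the two reflection axes may differ), and at every stage one must check that the segment constructed lies in $\interior(L)$ up to allowed boundary contact --- so that Lemma~\ref{Convex Subset Lemma} applies --- and that the scale it determines is genuinely distinct from $A$, $B$ and $C$. A smaller point is that for the final step the ray condition only needs to hold at the single point $p\in\boundary(B)\cap\interior(C)$ outside $A$, not at every point of $\boundary(B-A)$, where it can fail (for instance at a sharp vertex of $A$ on $\delta$); phrasing the appeal to Lemma~\ref{Ray Lemma} so that this is enough is what lets the argument close.
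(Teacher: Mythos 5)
Your overall plan---derive a contradiction by producing a third scale meeting $\interior(A)\cap\interior(B)$, using a local angle analysis at the crossing points $p_1,p_2$ together with Lemmas~\ref{Convex Subset Lemma}, \ref{reflex lemma}, \ref{proto-scales-lemma} and finally Lemma~\ref{Ray Lemma}---is the same as the paper's, and your local analysis at $p_1$ is in places more careful than the paper's (you treat the case where $p_1$ is interior to $L$, which the paper glosses over). But the proposal has a genuine gap exactly where the lemma's content lies: the step ``using this and the reflection symmetry one shows that from the point $p$ there is a ray in an allowed direction meeting $\interior(A)$'' is asserted, not proved, and you say so yourself. Worse, the intermediate claim that the rigid local picture makes $s_1$ and $s_2$ reflections of one another across $\delta$ and hence parallel is false: when the angle of $A\cup B$ is exactly $\theta_{k+1}$ at \emph{both} $p_1$ and $p_2$, the region $B-A$ is a ``tent'' with base $\delta$ a single segment and base angles $\theta_1$ at each end, its two legs (the sides of $B$ incident with $A$) converging rather than parallel for $k\geq 3$. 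So the surviving local configurations do \emph{not} force parallelism, and no purely local argument can; the tent must be excluded globally, using $C$. (Your surviving case list is also not a single rigid picture: for $k\geq 3$ the ``loose, $i=1$'' and ``tight, $i=k-1$'' alternatives give different angles, so four combinations at $\{p_1,p_2\}$ remain.)

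The missing argument, which is the heart of the paper's proof, runs as follows. At the reflex point $p_1$ of $A\cup B$ one of the two sides, $s_1$, is a side of $A$; extending $s_1$ past $p_1$ must keep it inside $A$ (otherwise Lemma~\ref{Convex Subset Lemma} puts the extension in a third scale meeting both $A$ and $B$), so it meets $\boundary(B)$ again at $p_2$ and the portion of $\boundary(A)$ inside $B$ is a single straight segment---this also disposes of your worry that $\delta$ might not be one segment. The incident sides $s_2,s_3$ of $B$ then make angles $\alpha=\theta_1$ and $\beta\leq\theta_{k-1}$ with this line, and parallelism is exactly $\beta=\theta_{k-1}$. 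If instead $\beta\leq\theta_{k-2}$, the triangle obtained by extending $s_2$ and $s_3$ contains all of $B-A$, so from \emph{every} point of $\boundary(B-A)$ a ray parallel to $s_2$ or to $s_3$ avoids $\boundary(B-A)$ and reaches $A$; Lemma~\ref{Ray Lemma} then yields a scale meeting the interiors of $A$, $B$, and $C$, which is a forbidden common neighbor of $a$ and $b$. Without this containment-in-a-triangle step (or an equivalent), your appeal to Lemma~\ref{Ray Lemma} has no hypothesis to stand on, so the proof does not close.
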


\begin{proof}
Assume the hypotheses of the lemma.

Let $p_1$ and $p_2$ be the intersection points of $\boundary(A)$ and $\boundary(B)$.  The vertices of $A\cup B$ that are not also vertices of $A$ or vertices of $B$ must be $p_1$ or $p_2$. Therefore by Lemma~\ref{reflex lemma}, one of these vertices, say $p_1$, is reflex in $A\cup B$.

Since $p_1$ is reflex in $A \cup B$, it has an internal angle of at least $\theta_{k+1}$.  If it has an angle of greater than $\theta_{k+1}$, by Lemma~\ref{proto-scales-lemma} there are at least three scales containing $p_1$, contradicting our hypotheses.  So the angle of $A\cup B$ at $p_1$ is exactly $\theta_{k+1}$.

Again since $p_1$ is a reflex angle of $A \cup B$, one of the sides $s_1$ with endpoint $p_1$ in $A\cup B$ must be a side of $A$, and the other must be a side of $B$.  Extend $s_1$ past $p_1$ until it intersects $\boundary(B)$ again, as shown in Figure~\ref{extending-scale}.  If this line segment goes outside of $A$, it's contained in a third scale intersecting $A$ and $B$, contradicting our hypotheses.  So $s_1$ intersects $\boundary(B)$ a second time at $p_2$.

Call the sides of $B$ incident with $A$ $s_2$ and $s_3$.  The small angle $\alpha$ between $s_1$ and $s_2$ is $\theta_1$ since $L$ has an angle of $\theta_{k+1}$ at $p_1$.  To show that $s_2$ and $s_3$ are parallel, we show that the angle $\beta$ between $s_1$ and $s_3$ is $\theta_{k-1}$.  Since $B$ is convex and $s_1$ is contained in $B$, $\beta$ is convex, and $\beta \leq \theta_{k-1}$.

Suppose by way of contradiction that $\beta \leq \theta_{k-2}$.  Then the triangle formed by extending $s_2$ and $s_3$ contains $B-A$, as shown on the right in Figure~\ref{extending-scale}.  Then the hypotheses of Lemma~\ref{Ray Lemma} are satisfied, since we can choose a ray from each point in this triangle parallel to either $s_2$ or $s_3$ that avoids $\boundary(B-A)$ and intersects $A$.  Therefore by Lemma~\ref{Ray Lemma} there's a scale intersecting the interiors of $A$, $B$, and $C$, contradicting our hypotheses.
\end{proof}

\begin{lemma}
\label{parallel sides corollary}
Let $L$ be a $k$-lizard, and $A$, $B$, and $C$ be scales in $L$ with corresponding vertices $a$, $b$, and $c$ in the $k$-MSP graph $G$ of $L$. Suppose $abc$ is an induced path in $G$,  no other vertex in $G$ is adjacent to both $a$ and $b$, and no other vertex in $G$ is adjacent to both $b$ and $c$.  Then the parallel sides of $B$ incident with $A$ and the parallel sides of $B$ incident with $C$ are the same.
\end{lemma}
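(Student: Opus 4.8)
The plan is to apply Lemma~\ref{parallel sides lemma} twice and then identify the two resulting pairs of sides by a convexity argument. Since $abc$ is an induced path in $G$ and no vertex of $G$ is adjacent to both $a$ and $b$, Lemma~\ref{parallel sides lemma} shows that the two sides of $B$ incident with $A$, say $s$ and $s'$, are parallel. The reversed path $cba$ is also an induced path in $G$, and by hypothesis no vertex of $G$ is adjacent to both $c$ and $b$, so Lemma~\ref{parallel sides lemma} applies again and the two sides of $B$ incident with $C$, say $t$ and $t'$, are parallel. It remains to prove $\{s,s'\}=\{t,t'\}$.

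First I would extract from the proof of Lemma~\ref{parallel sides lemma} the following structural fact about $B-A$. Letting $p_1$ and $p_2$ be the two points of $\boundary(A)\cap\boundary(B)$, the proof shows that the segment from $p_1$ to $p_2$ lies along a single side of $A$; hence it is a chord of the convex polygon $B$ that cuts it into the convex polygons $\overline{B\cap A}$ and $\overline{B-A}$. The sides of $\overline{B-A}$ are this chord together with a run of consecutive sides of $B$ whose first and last members are exactly $s$ and $s'$. In particular every side of $B$ that meets $\overline{B-A}$ lies on the one boundary arc of $B$ running from $p_1$ to $p_2$ that carries $s$, $s'$, and the sides strictly between them.

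Now suppose, for contradiction, that $\{s,s'\}\neq\{t,t'\}$. In a convex polygon at most two sides are parallel to any given line, and if there are two such sides they lie on opposite arcs of the boundary; thus $s,s'$ is precisely the pair of sides of $B$ parallel to their common line, and likewise for $t,t'$, so if those two lines were parallel the two pairs would coincide. Hence the lines are not parallel and $s,s',t,t'$ are four distinct sides of $B$. Since $a$ and $c$ are non-adjacent in $G$ we have $\interior(A)\cap\interior(C)=\varnothing$, so $\interior(B\cap C)$ is disjoint from $\interior(A)$ and therefore $B\cap C\subseteq\overline{B-A}$. Consequently the two points $q_1,q_2$ of $\boundary(B)\cap\boundary(C)$ lie on the boundary arc of $B$ cut off on the $B-A$ side, and the sides $t\ni q_1$ and $t'\ni q_2$ therefore lie among the sides of $B$ strictly between $s$ and $s'$ on that single arc. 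But $t$ and $t'$ are parallel and distinct, so one of them lies on each of the two arcs of $\boundary(B)$ determined by $s$ and $s'$; they cannot both lie on one such arc. This contradiction finishes the proof.

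I expect the main obstacle to be the degenerate configurations: a point $q_i$ might coincide with $p_1$ or $p_2$, or might be a vertex of $B$ lying on two sides, so that ``the side of $B$ containing $q_i$'' is ambiguous and the clean arc argument must be adjusted; the same issue can in principle make the two sides ``incident with $A$'' fail to be distinct. I would rule these out either by a limiting argument on the chord, or by arguing directly that such a coincidence forces a scale meeting the interiors of $A$, $B$, and $C$ (via Lemma~\ref{Ray Lemma}), which contradicts the hypotheses. A secondary point requiring care is making the structural claim about $\overline{B-A}$ fully rigorous from the argument inside the proof of Lemma~\ref{parallel sides lemma} rather than from its statement alone.
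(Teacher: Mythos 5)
Your first two steps (two applications of Lemma~\ref{parallel sides lemma}, then the observation that a convex polygon has at most two sides in a given direction, so the two pairs either coincide or consist of four distinct sides interleaved as $s,t,s',t'$ around $B$) are correct and match the paper's setup. The gap is in what you call the ``degenerate configurations'': they are not rare edge cases to be cleaned up at the end, but the \emph{only} configuration your main argument leaves standing, and ruling it out is where essentially all of the work lies. Concretely: your own interleaving observation shows that exactly one of $t,t'$ --- say $t$ --- lies on the arc of $\boundary(B)$ between $p_1$ and $p_2$ that is contained in $A$, so $t\subseteq A$. Since $t$ is incident with $C$, it contains a point $q_1\in\boundary(B)\cap\boundary(C)$, and since $\interior(A)\cap\interior(C)=\varnothing$, that point lies in $\boundary(A)\cap\boundary(B)\cap\boundary(C)$, i.e.\ $q_1\in\{p_1,p_2\}$. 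So the ``main case'' in which both $q_1,q_2$ sit cleanly on the $B-A$ arc never needs to be refuted --- it simply cannot arise --- and every four-distinct-sides configuration is a triple-contact configuration. Your contradiction therefore never materializes without handling that case.

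Neither of your proposed remedies does the job as stated. A limiting argument has nothing to converge to, since the coincidence is forced rather than perturbable. And Lemma~\ref{Ray Lemma} requires a ray condition at \emph{every} point of $\boundary(B-A)$, which is not available here; moreover a scale through the triple point meeting $\interior(A)$ and $\interior(C)$ only exists when that point is interior to $L$ or a convex vertex of $L$. The paper's proof is devoted precisely to this configuration: it shows the triple point must be a reflex vertex of $L$, that its angle must be exactly $\theta_{k+1}$ (else Lemma~\ref{proto-scales-lemma} produces a forbidden extra scale meeting $A$, $B$, and $C$), and then analyzes the far endpoints of the sides $t\subseteq A$ and $s\subseteq C$ to show they meet $\boundary(L)$ at convex angles, forcing $A$ and $C$ to extend past these sides and overlap in their interiors --- contradicting $a\not\sim c$. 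Your proposal replaces none of this; to complete it you would need to supply an argument of comparable substance at the triple point.
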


\begin{proof}
Assume the hypotheses of the lemma, $s_1$ and $s_2$ are the parallel sides of $B$ incident with $A$, $s_3$ and $s_4$ are the parallel sides of $B$ incident with $C$, and suppose by way of contradiction that $s_1$, $s_2$, $s_3$, and $s_4$ are distinct.  Since $B$ is convex, these four sides are without loss of generality in the order $s_1$, $s_3$, $s_2$, $s_4$ counterclockwise around $B$.  So again without loss of generality, $s_3 \subseteq A$, and likewise $s_1 \subseteq C$, as shown in Figure~\ref{parallel corollary figure}.

Say the endpoint of $s_3$ on $C$ is $p_1$. Since $s_3 \subseteq A$, $p_1 \in A \cap C$.  Since $\interior(A)$ and $\interior(C)$ don't intersect, $p_1$ is on the boundary of both $A$ and $C$.  If $p_1$ is in the interior of $L$ or is a convex angle of $L$, we can construct another scale intersecting both $A$ and $C$, contradicting the hypotheses of the lemma.  So $p_1$ is a reflex angle of $L$.  By Lemma~\ref{proto-scales-lemma}, the internal angle of $L$ at $p_1$ is $\theta_{k+1}$ since no other scales intersect all of $A$, $B$, and $C$.

Call the other endpoint of $s_1$ $p_2$ and the other endpoint of $s_3$ $p_3$.  By similar reasoning, $p_2$ and $p_3$ must be on the boundary of $L$.  If $L$ had a reflex angle at $p_2$, then there would be more than two scales formed from the proto-scales at $p_1$ and $p_2$. So $L$ has an angle of at most $\theta_k$ at $p_2$, and similarly at $p_3$.  Then $A$ and $C$ extend past $s_3$ and $s_1$ and intersect each other, contradicting our hypotheses.
\end{proof}

In a $k$-lizard $L$, for a scale $A$ of $L$, we say that an \textit{\textbf{end $2$-region}} of $A$ is the intersection of $A$ with another scale $B$, such that $A-B$ is connected, and $\interior(A\cap B)$ is disjoint from other scales of $L$.  In Figure~\ref{end regions figure} on the right, $A\cap B$ is an end $2$-region of $A$, but $A \cap D$ is not since $A-D$ is disconnected.

\begin{figure}[ht]
\begin{center}
\includegraphics[width=.3\textwidth]{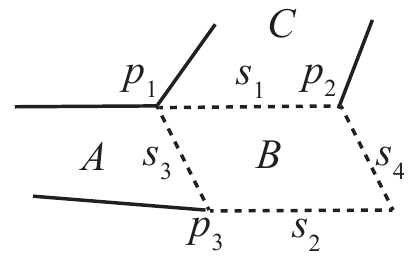} \hspace*{1cm}
\includegraphics[width=.55\textwidth]{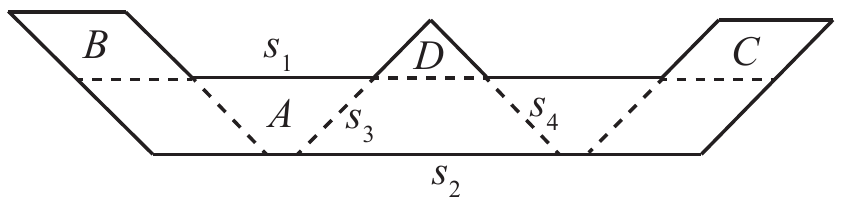}
\caption{On the left, the sides $s_1$ and $s_3$ in the proof of Lemma~\ref{parallel sides corollary}.  On the right, the intersection of $A$ and $D$ in the proof of Lemma~\ref{end regions lemma}.} \label{parallel corollary figure} \label{end regions figure}
\end{center}
\end{figure}

\begin{lemma}
\label{end regions lemma}
Any scale has at most two end 2-regions.
\end{lemma}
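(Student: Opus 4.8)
The plan is to assume, for contradiction, that a scale $A$ has three end $2$-regions, and then to exhibit a scale other than the two witnessing the ``middle'' one whose interior meets that region's interior, contradicting the definition. So suppose $R_1=A\cap B_1$, $R_2=A\cap B_2$, $R_3=A\cap B_3$ are end $2$-regions of $A$ for distinct scales $B_1,B_2,B_3$. Each $B_i$ is convex, meets $\interior(A)$, and (being maximal) is neither contained in $A$ nor contains $A$; hence $\boundary(A)$ and $\boundary(B_i)$ cross, the polygonal arc $c_i:=\boundary(B_i)\cap A$ has its two endpoints on $\boundary(A)$ and cuts $A$ into $R_i$ and its complement, and, because $A-B_i$ is connected, $I_i:=\boundary(A)\cap R_i=\boundary(A)\cap B_i$ is a single arc of $\boundary(A)$ with $\boundary(R_i)=I_i\cup c_i$.

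Next I would show the relative interiors of $I_1,I_2,I_3$ are pairwise disjoint. If $x$ lay in the relative interiors of both $I_i$ and $I_j$, then $x\in\interior(B_i)\cap\interior(B_j)$, so points of $A$ near $x$ and just inside $\boundary(A)$ would lie in $\interior(A)\cap\interior(B_i)=\interior(R_i)$ and simultaneously in the scale $B_j$, violating that $\interior(R_i)$ is disjoint from other scales. Therefore the three arcs occur in a definite cyclic order around $\boundary(A)$; relabel so that order is $I_1,I_2,I_3$, and call $R_2$ the \emph{middle} region.

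It now suffices to produce a scale $D\notin\{A,B_2\}$ with $\interior(D)\cap\interior(R_2)\neq\emptyset$. By Lemma~\ref{reflex lemma}, $A\cup B_2$ has a reflex angle, whose only possible location is an endpoint of $I_2$; call it $p$, where an edge of $A$ and an edge of $B_2$ meet, and note that $R_2$ occupies one of the wedges at $p$ between those two edges and so has angle at least $\theta_1$. If the angle of $A\cup B_2$ at $p$ exceeds $\theta_{k+1}$, then by (the proof of) Lemma~\ref{proto-scales-lemma} there is a point $x$ near $p$ lying in the interiors of at least three scales, two of which are $A$ and $B_2$; that $x$ lies in $\interior(A)\cap\interior(B_2)=\interior(R_2)$, so a third scale meets $\interior(R_2)$, a contradiction. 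Hence the angle at $p$ is exactly $\theta_{k+1}$ and the complementary ``notch'' there has angle $\theta_{k-1}\ge\theta_1$. If $p\in\interior(L)$, then a short segment through $p$ in an allowed direction with one ray in the notch and the other in $\interior(R_2)$ exists — because the notch and the $R_2$-wedge are vertical angles at $p$ bounded by the allowed directions of the two edges meeting at $p$ — and by Lemma~\ref{Convex Subset Lemma} it lies in a scale $D$; since $D$ contains a notch point it is neither $A$ nor $B_2$, and it meets $\interior(R_2)$, giving the contradiction.

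The main obstacle is the remaining case: $p\in\boundary(L)$ with interior angle exactly $\theta_{k+1}$. Here the local picture (only $A$ and $B_2$ occupying the $\theta_{k+1}$ corner) is not by itself contradictory, and I must use the global hypothesis that $A$ has caps $R_1$ and $R_3$ on \emph{both} sides of $I_2$: these force $A$, and hence $L$, to extend past $p$ along $\boundary(A)$ on both sides of the edge of $A$ at $p$, which I expect to be incompatible with $p$ being a $\theta_{k+1}$ reflex vertex of $L$ — essentially because a scale running along $\boundary(A)$ just past $p$ would, unless blocked by $\boundary(L)$, reach into $\interior(R_2)$. Pinning this down will require a careful analysis of which edges of $L$ are incident to $p$ and how the edge of $A$ and the edge of $B_2$ at $p$ extend, and I anticipate invoking Lemma~\ref{two-proto-scales-lemma} in the subcase where two edges of $L$ at $p$ lie on a common line.
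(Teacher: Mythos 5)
Your proof is incomplete, and you say so yourself: the case where the reflex vertex $p$ of $A\cup B_2$ lies on $\boundary(L)$ with interior angle exactly $\theta_{k+1}$ is left as an acknowledged obstacle. This is not a minor loose end --- it is the heart of the lemma. A $\theta_{k+1}$ reflex vertex of $L$ at which exactly two scales meet is a perfectly realizable local configuration (it occurs, for instance, throughout the caterpillar construction of Theorem~\ref{caterpillar theorem}), so no contradiction can be extracted from the local picture at a single crossing point of $\boundary(A)$ and $\boundary(B_2)$. The contradiction has to come from a global constraint, and your sketch of how the ``caps'' $R_1$ and $R_3$ would force one never materializes. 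A secondary soft spot: your disjointness claim for the arcs $I_i$ assumes a point in the relative interior of $I_i$ lies in $\interior(B_i)$, which fails if a side of $B_i$ runs along $\boundary(A)$; this is patchable but is another place where the argument is not airtight.

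The paper closes exactly the gap you hit, by a different route. It first shows two of the three scales, say $B$ and $C$, have disjoint interiors (a triangle $xyz$ with vertices in $\interior(A\cap B)$, $\interior(A\cap C)$, $\interior(B\cap C)$ would lie in a scale violating the 2-region hypothesis), and then applies Lemma~\ref{parallel sides corollary} to conclude that $A-(B\cup C)$ is a ``tube'' bounded by two parallel sides $s_1$ and $s_2$ of $A$. The third scale $D$ must therefore enter $A$ through $s_1$ (say) at two crossing points; at \emph{both} of these the angle of $A\cup D$ is forced to be exactly $\theta_{k+1}$ by the argument you already have, so the two sides of $D$ incident with $A$ diverge as they go into $A$. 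Maximality of $D$ then forces both sides to reach the opposite parallel side $s_2$, so $D$ spans the tube and $A-D$ is disconnected --- contradicting the definition of an end 2-region. Note that the contradiction is obtained from the connectivity clause of the definition via both crossing points and the parallel-sides structure, not from exhibiting a third scale meeting $\interior(R_2)$ near one corner. If you want to salvage your outline, you would need to import that tube structure (your cyclic ordering of the $I_i$ is not by itself enough) and argue about both endpoints of $c_2$ simultaneously.
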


\begin{proof}
Let $A$ be a scale in a $k-$lizard $L$, and assume by way of contradiction that there are three scales $B$, $C$, and $D$ of $L$ intersecting $A$ in three end $2$-regions.

We claim that $\interior(B)$ and $\interior(C)$ don't intersect.  Suppose by way of contradiction that they do.  By hypothesis, the region $B\cap C$ is disjoint from the regions $A\cap B$ and $A\cap C$.  Let $x\in \interior(A \cap B)$, $y \in \interior(A \cap C)$, and $z \in \interior(B \cap C)$. The triangle $xyz$ is contained in $L$.  A scale containing this triangle is distinct from $A$ and $B$ and intersects $\interior(A \cap B)$, contradicting the assumption that $\interior(A \cap B)$ is a 2-region.

Then by Lemma~\ref{parallel sides corollary}, the region $A-(B\cup C)$ has parallel sides $s_1$ and $s_2$, which are the parts of the boundary of $A$ not contained in the interior of $B$ or $C$.

Since $D \cap A$ is disjoint from $B$ and $C$, $D$ intersects $\boundary(A)$ on $s_1$ or $s_2$.  Say $D$ intersects $A$ on $s_1$, and call these sides of $D$ $s_3$ and $s_4$, as shown on the right in Figure~\ref{end regions figure}.  Furthermore, the internal angles at these points in $D \cup A$
must have measure $\theta_{k+1}$, otherwise by Lemma~\ref{proto-scales-lemma}, there would be another scale intersecting $A$ and $D$. Since $D$ is maximal in $L$ and the sides $s_3$ and $s_4$ are angled away from each other, they both intersect $s_2$.  Hence $A-D$ is disconnected, contradicting our assumptions.
\end{proof}

\section{Families of $k$-MSP Graphs} \label{families section}

In this section we prove that all complete graphs are $k$-MSP graphs for all $k$, trees are $k$-MSP graphs for all $k$ if and only if they're caterpillars, and no cycle is a $k$-MSP graph for any $k>3$.  We also investigate induced cycles in $k$-MSP graphs.

\subsection{Complete graphs}

\begin{figure}[H]
\begin{center}
\includegraphics[width=12cm]{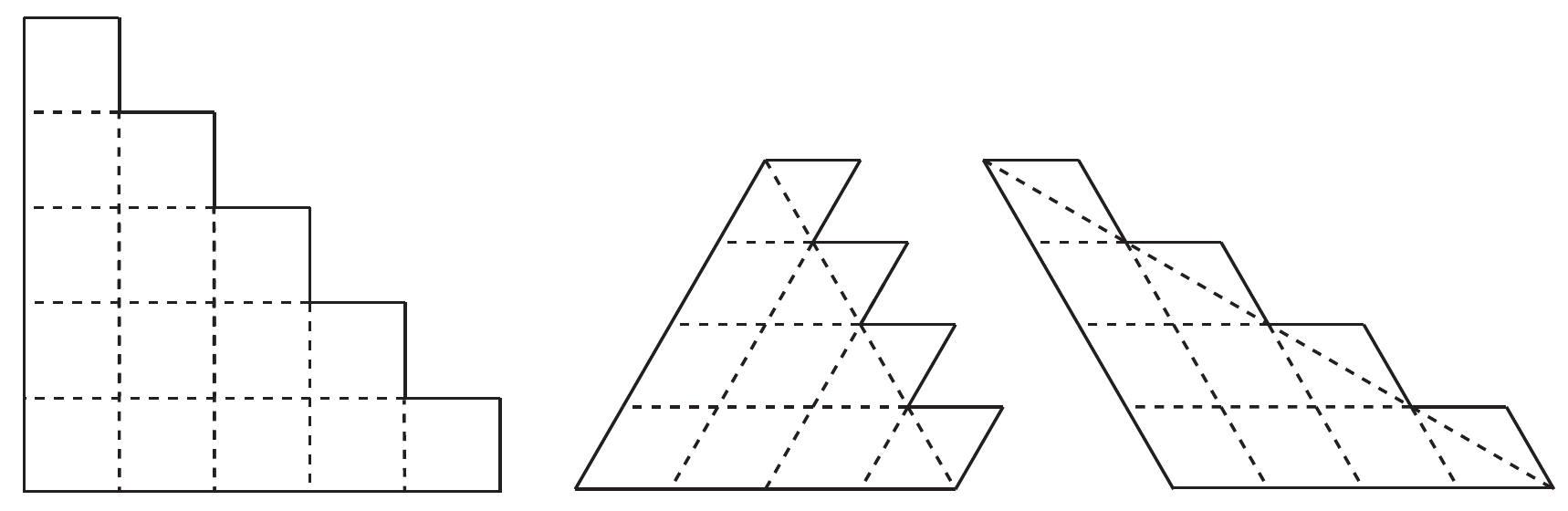}
\end{center}
\caption{Representations of $K_5$ as a $2$-MSP graph (left), a $3$-MSP graph (center), and a $6$-MSP graph (right).}
\label{complete}
\label{CompleteGraphs}
\end{figure}

\begin{proposition}
The complete graph $K_n$ is a $k$-MSP graph for all $k>1$ and $n \geq 1$.
\end{proposition}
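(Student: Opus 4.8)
The plan is to construct, for each $k > 1$ and each $n \geq 1$, an explicit $k$-lizard whose scales are exactly $n$ convex sub-polygons, all of which share a common interior point. The natural model is a ``fan'' or ``star'' construction: start with a convex $2k$-gon (or any convex $k$-lizard) and cut $n-1$ narrow notches into its boundary so that the resulting non-convex $k$-lizard $L$ has exactly $n$ maximal convex sub-polygons, each obtained by ``filling in'' all notches but one. The key design constraint is that each notch must be shallow enough that the scale which fills in the other $n-1$ notches still reaches across the whole polygon, so that all $n$ scales pairwise overlap in a common central region; this guarantees the $k$-MSP graph is $K_n$.

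The steps, in order, are as follows. First, for the base cases $n = 1$ and $n = 2$, exhibit trivial examples (a single convex $2k$-gon gives $K_1$; a single shallow notch gives $K_2$), and observe that $k > 1$ is needed so that a notch with two sides of distinct allowed directions exists. Second, for general $n$, describe the construction: take a long thin convex $k$-lizard $L_0$ (for instance a thin parallelogram-like region using two allowed directions) and place $n-1$ small triangular or parallelogram-shaped notches along one long side, each notch using allowed directions and each creating exactly one reflex vertex of angle $\theta_{k+1}$ (so that, by Lemma~\ref{proto-scales-lemma}, each notch contributes the minimum number of scales). Third, verify that the scales of the resulting $L$ are exactly the $n$ sub-polygons $S_i$, where $S_i$ is $L$ with notch $i$ ``filled in'' and all other notches left as is — using Lemma~\ref{Convex Subset Lemma} to see each $S_i$ is contained in a scale and maximality of each $S_i$ to see it \emph{is} a scale, and using Lemma~\ref{reflex lemma} (or a direct convexity argument) to rule out any other scale. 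Fourth, check that all $n$ scales contain a common interior point: since the notches are shallow and placed away from the ``core'' of $L_0$, the intersection $\bigcap_i S_i$ contains the core, which has nonempty interior; hence every pair of scales intersects in its interior and the $k$-MSP graph is $K_n$.

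The main obstacle I expect is the verification in the third step — pinning down that $L$ has \emph{exactly} $n$ scales and no more. The delicate point is that combinations of notches could in principle admit additional maximal convex sub-polygons (e.g.\ a scale avoiding two notches at once by passing between them), so the notches must be spaced and oriented carefully to prevent this; controlling the reflex angles to be exactly $\theta_{k+1}$ via Lemma~\ref{proto-scales-lemma} and keeping the notch directions fixed is the tool that makes this work, but it requires a careful case check that each reflex vertex lies on the boundary of exactly the scales it should. The common-point verification in the fourth step is comparatively routine once the geometry is set up, since shallowness of the notches is a quantitative condition we control directly.
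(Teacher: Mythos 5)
Your construction has a genuine gap, and it lies exactly where you predicted trouble but is worse than ``delicate.'' A scale is by definition a convex subset of $L$, so no scale can ``fill in'' a notch: every scale must avoid every notch simultaneously. This makes your list of $n$ scales $S_1,\ldots,S_n$ incoherent as stated, and the actual scales of the notched region do not form $K_n$. Concretely, with $n-1$ shallow notches along one long side of a thin convex region, the maximal convex sub-polygons are (i) the single full-length strip lying above all the notch apexes and (ii) the local regions over the gaps between consecutive notches, each bounded by extensions of the adjacent notch edges; the regions in (ii) are pairwise disjoint when the region is thin relative to the notch spacing, so the resulting MSP graph is star-like rather than complete. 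The obstruction is structural: if all $n$ scales are to share a common core, each must be a maximal convex region containing that core and avoiding all $n-1$ notches, and when the notches are shallow, same-side perturbations there is essentially only one such region. Two further problems: a convex $k$-lizard has at most $2k$ sides (at most two per allowed direction), so ``one notch per side of a convex $2k$-gon'' cannot reach all $n$; and a notch cut into the interior of a side with a \emph{single} reflex vertex of angle exactly $\theta_{k+1}$ does not exist, since the notch triangle's two base angles would have to be positive multiples of $\pi/k$ summing to $\pi/k$ (so the cheapest such notch already has reflex angle $\theta_{k+2}$ and, by Lemma~\ref{proto-scales-lemma}, forces at least three scales at its apex).

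The paper sidesteps all of this with a staircase rather than a notched convex region: $L$ is bounded by two long sides of directions $\theta_1$ and $\theta_{k-1}$ meeting at a corner, together with $2n-2$ unit sides alternating between those two directions. The $n$ scales are the $n$ staggered strips determined by the steps (plus, for $k>2$, the corner triangle), and because the steps are nested toward the corner rather than placed side by side, every scale contains the region near that corner --- which is precisely what forces all pairwise intersections. If you want to salvage your approach, the repair is to stagger the notch depths monotonically so that the $i$-th scale is the strip reaching depth $d_i$ on one side of the shallower notches; at that point you have rederived the staircase.
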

\begin{proof}
For $k > 2$, we construct a $k$-lizard $L$ with $2n$ sides: two incident sides $s_1$ and $s_2$ with directions $\theta_1$ and $\theta_{k-1}$ respectively and both with length $n-1$, and $2n-2$ unit sides alternating between directions $\theta_{k-1}$ and $\theta_1$.  Examples of this construction with $n=5$ and $k=3$ and $k=6$ are shown in Figure~\ref{complete}.

Each of the steps create individual maximal sub-polygons that all contain the region in $L$ near the corner between $s_1$ and $s_2$. The triangular region bounded on two sides by $s_1$ and $s_2$ forms an additional maximal sub-polygon, also containing this region. Thus all scales of $L$ intersect in their interiors, and $L$ has $K_n$ as its $k$-MSP graph.

For $k=2$, we modify the construction so that $s_2$ has direction $\theta_2$ and $L$ has $2n+2$ total sides instead of $2n$. The first polygon in Figure~\ref{complete} is an example of this construction with $n=5$.  This has the same maximal sub-polygons as the construction above, without the corner triangle, and again they all overlap.

Therefore $K_n$ is a $k$-MSP graph for all $k$.
\end{proof}

\subsection{Trees}

Theorems~\ref{2 tree theorem} and~\ref{caterpillar theorem} below completely characterize which trees are $k$-MSP graphs for any $k \geq 2$.

\begin{theorem}
Every tree is a $2$-MSP graph. \label{2 tree theorem}
\end{theorem}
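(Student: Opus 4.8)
The plan is to prove, by induction on $|V(T)|$, the stronger statement that every tree $T$ has a $2$-lizard representation in which every scale is \emph{accessible} in a suitable sense. Since a convex $2$-lizard is exactly an axis-parallel rectangle, the scales of a $2$-lizard $L$ are precisely its inclusion-maximal axis-parallel rectangles; call such a scale $S=[x_L,x_R]\times[y_B,y_T]$ \emph{accessible} if (up to a rotation of the plane by a multiple of $\pi/2$, which I suppress) there is a slab $W=[p,q]\times[y_B,y_T]\subseteq S$ with $x_L<p<q<x_R$ such that $[p,q]\times\{y_B\}$ and $[p,q]\times\{y_T\}$ both lie on $\boundary(L)$, and some open neighbourhood $\mathcal N$ of $W$ meets no scale of $L$ other than $S$. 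Informally, $W$ reaches clear across $S$, is flush with two opposite exposed sides of $S$, and is isolated from every other scale. The base case $T=K_1$ is a single square, its own unique scale, accessible via the middle third of its width.

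For the inductive step, choose a leaf $v$ of $T$ with neighbour $u$, apply the hypothesis to $T-v$ to obtain a $2$-lizard $L'$ in which every scale is accessible, and let $S$ be the scale corresponding to $u$, with accessibility witnessed by a slab $W=[p,q]\times[y_B,y_T]$ and neighbourhood $\mathcal N$. Pick $p<p'<q'<q$, pick $d>0$ small enough that the rectangle $N=[p',q']\times[y_T,y_T+d]$ lies inside $\mathcal N$, and set $L=L'\cup N$; since $\mathcal N$ contains nothing of $L'$ above $y_T$, the rectangle $N$ meets $L'$ only along $[p',q']\times\{y_T\}$, so $L$ is again a $2$-lizard. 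The scale to be assigned to $v$ is $R:=[p',q']\times[y_B,y_T+d]$, obtained by pushing the bump $N$ straight down across $S$ until it meets the exposed side $[p',q']\times\{y_B\}$.

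It then remains to check four things. First, $R$ is a scale of $L$: it lies in $L$ because $[p',q']\times[y_B,y_T]\subseteq S$ and $[p',q']\times[y_T,y_T+d]=N$, and it is maximal because $[p',q']\times\{y_B\}$ and the top, left, and right sides of $N$ all lie on $\boundary(L)$ and nothing lies to the left of $p'$ or right of $q'$ at heights in $(y_T,y_T+d]$, so $R$ cannot be widened either. Second, $R$ is the only new scale and every scale of $L'$ persists unchanged: any rectangle of $L$ meeting $\interior(N)$ has $x$-extent inside $[p',q']$ and $y$-extent inside $[y_B,y_T+d]$ (a larger one would reach a point just outside $L$), hence lies in $R$; and an old scale cannot be enlarged inside $L$, since any enlargement either stays inside $L'$ (impossible) or lies inside $R$, but $R\not\supseteq S$ because $[p',q']\subsetneq(x_L,x_R)$, and $R$ is disjoint from every other old scale because $R\subseteq\mathcal N$. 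Third, the intersection graph is $T$: $R\cap S=[p',q']\times[y_B,y_T]$ has nonempty interior, while $R\subseteq\mathcal N$ forces $\interior(R)$ to be disjoint from the interiors of all other old scales, and $R$ is the only new scale, so $R$ is adjacent exactly to $u$. Fourth, accessibility is preserved: every old scale except $S$ is untouched; $S$ is still accessible via a sub-slab of $W$ to the left of $N$ (its two relevant sides are still exposed and its neighbourhood still isolated, since $N$ sits strictly to the right inside $\mathcal N$); and $R$ is accessible via the slab $[p',q']\times[y_T+d/3,\,y_T+2d/3]$ inside the bump, whose bounding sides are the exposed left and right sides of $N$ and which is clearly isolated. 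This completes the induction, so every tree is a $2$-MSP graph.

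The part I expect to require the most care is the bookkeeping in the second and third items — confirming that gluing on the bump introduces exactly one new maximal rectangle and disturbs nothing else. The accessibility invariant is tailored precisely to make this go through: it guarantees that the new scale $R$ drops straight down across $S$, is stopped at once by an exposed side of $S$, is too narrow to spread sideways past the bump, and — crucially — lies inside the isolating neighbourhood $\mathcal N$, which is what confines $R$'s overlap to $S$ alone. Demanding that the opposite side of $S$ be exposed (rather than hoping it is) is exactly what prevents $R$ from plunging through $S$ into a neighbouring scale. A minor separate point is the degenerate geometry of the base-case square, which has no genuinely thin direction; this is why accessibility is phrased via a slab flush with two opposite sides rather than via the ends of a thin bar.
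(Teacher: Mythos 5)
Your proof is correct and follows essentially the same strategy as the paper's: induction on a leaf, with a strengthened inductive hypothesis guaranteeing that each scale has a private region touching $\boundary(L)$ where a new pendant scale can be attached without disturbing the rest of the representation. Your ``accessibility'' invariant is a more explicit version of the paper's hypothesis that each scale contains a rectangle containing no other scale, and your externally-glued bump plays the role of the paper's Y-shaped replacement; the extra bookkeeping you carry out (confining any rectangle meeting the bump to $R$, and re-verifying the invariant for $S$, for $R$, and for the untouched scales) is exactly the verification the paper leaves implicit.
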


\begin{proof}
We prove a stronger statement by induction.  We prove that every tree has a $2$-MSP representation $L$ in which each scale contains a rectangle that contains no other scale. For the base case, a rectangle has only one scale, and has $2$-MSP graph $K_1$.
For the induction step, suppose $T$ is a tree with $n$ vertices, let $x$ be a leaf of $T$, and let $z$ be the neighbor of $x$ in $T$.  By induction, $T-x$ has a $2$-MSP representation $L$ in which the scale $Z$ corresponding to the vertex $z$ contains a rectangle $R$ that contains no other scale.  We replace $R$ with the Y-shaped construction $Y$ shown in Figure~\ref{tree figure}.  This is always possible since there's always a region next to $R$ of some small width outside of $L$.  This new 2-lizard has one new scale for $x$ that intersects $Z$ and no other scale, so it's a 2-MSP representation of $T$.
\end{proof}

\begin{figure}[ht]
\begin{center}
\includegraphics[width=10cm]{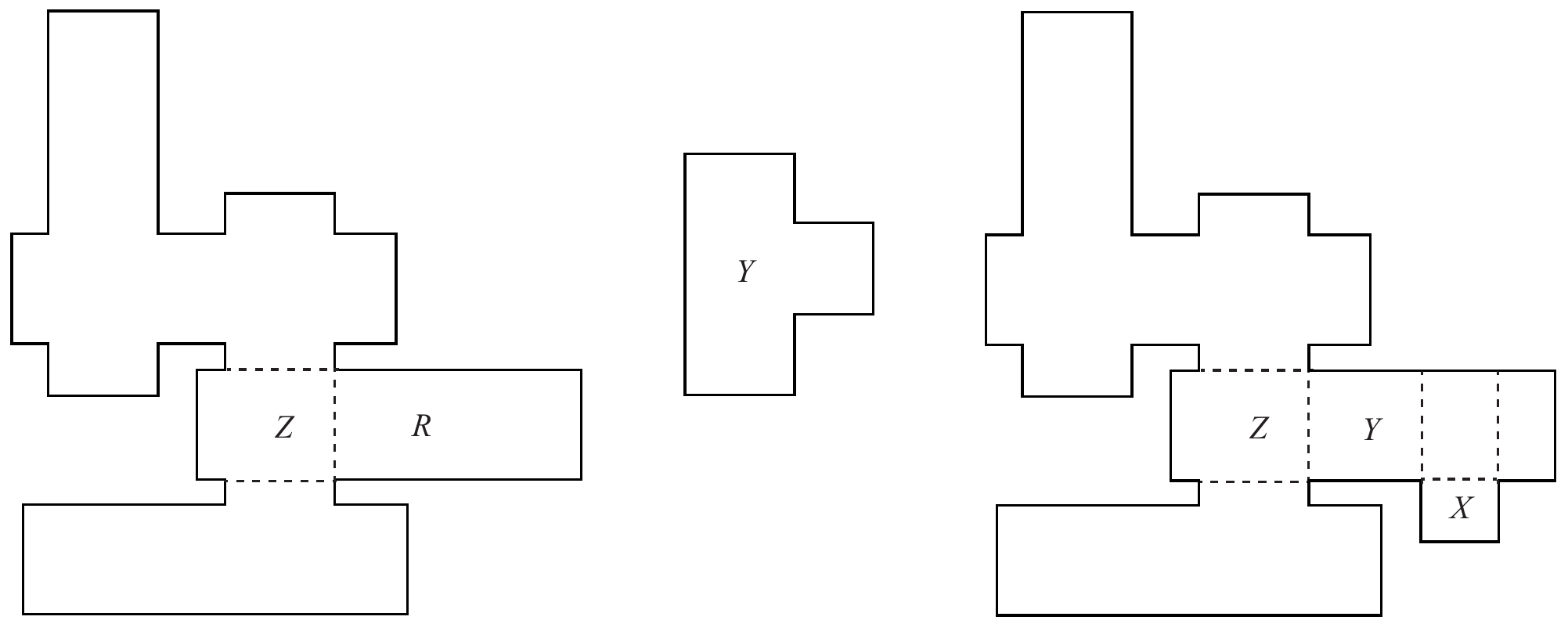}
\end{center}
\caption{The induction step in the proof of Theorem~\ref{2 tree theorem}.}
\label{tree figure}
\end{figure}

\begin{figure}[ht]
\begin{center}
\includegraphics[width=0.5\textwidth]{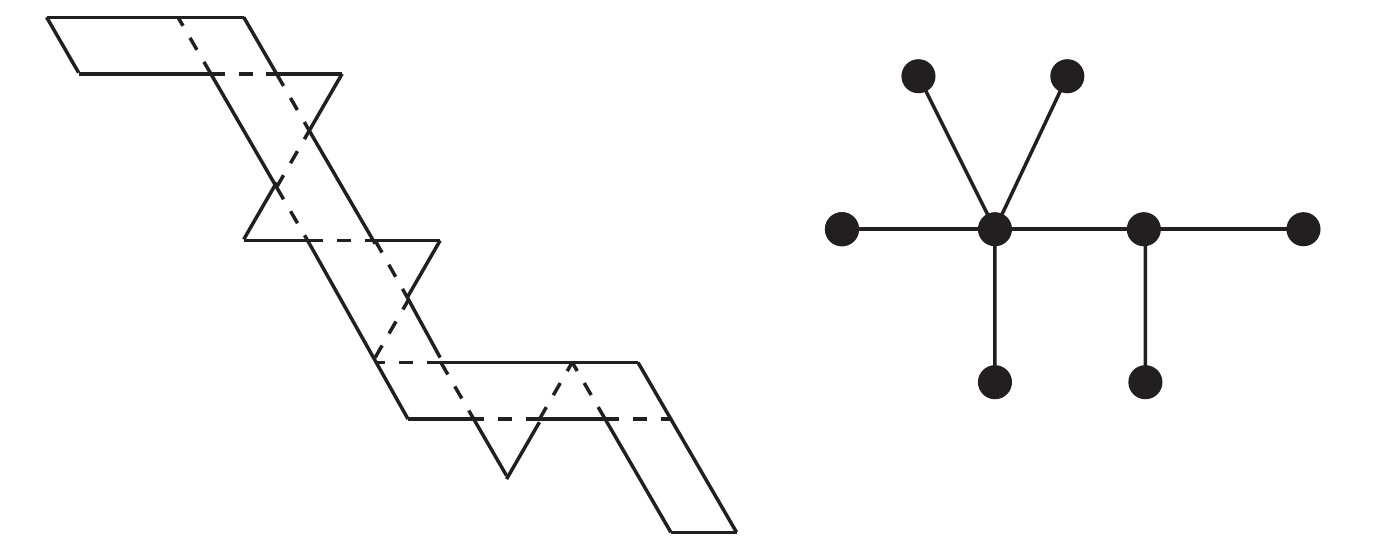} \hspace*{1cm}
\includegraphics[width=0.35\textwidth]{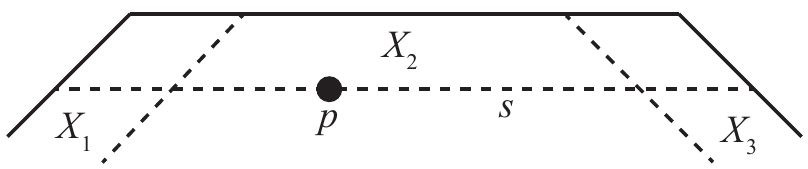}
\end{center}
\caption{On the left, a $3$-MSP representation of a caterpillar.  On the right, the point $p$ in the proof of Theorem~\ref{cycle-theorem}.}
\label{caterpillar} \label{cycle-proof-figure}
\end{figure}

Recall that a \textit{\textbf{caterpillar}} graph is a tree in which a single path (the \textit{\textbf{spine}}) is incident to or contains every edge of the tree \cite{west2001introduction}.

\begin{theorem}
All caterpillars are $k$-MSP graphs for all $k \geq 3$.  No non-caterpillar trees are $k$-MSP graphs for any $k \geq 3$. \label{caterpillar theorem}
\end{theorem}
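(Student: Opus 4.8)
The statement has two halves: a construction showing caterpillars are $k$-MSP graphs, and an obstruction showing nothing else among trees is. For the construction, the plan is to build an explicit $k$-lizard for a given caterpillar with spine $v_1v_2\cdots v_n$. I would place scales $V_1,\dots,V_n$ from left to right inside a long thin strip, with $V_i$ and $V_{i+1}$ overlapping in a small region near a common waist, and with a reflex vertex of the boundary inserted between the footprints of $V_i$ and $V_{i+2}$ so that $\interior(V_i)\cap\interior(V_j)=\emptyset$ whenever $|i-j|\ge 2$; this alone realizes the spine path. For each leaf attached to $v_i$, I would attach a thin ``sliver'' scale that crosses $V_i$ transversally --- entering and leaving through two opposite sides of $V_i$ --- at a location strictly between the overlaps $V_{i-1}\cap V_i$ and $V_i\cap V_{i+1}$, and in an allowed direction different from the direction of the long sides of $V_i$, which is possible precisely because $k\ge 3$. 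Distinct slivers at $v_i$ are placed at distinct locations along $V_i$, so they are pairwise interior-disjoint, and each meets $\interior(V_i)$ and no other $V_j$; note that Lemma~\ref{end regions lemma} is exactly what forces the slivers to cross $V_i$ rather than be ``tabs'' once $v_i$ has at least three leaves. The one thing that must be done carefully is to cut the boundary notches that make each sliver maximal narrowly enough that they create no spurious scales; granting that, the scales of $L$ are exactly the $V_i$ together with the slivers, with the given caterpillar as $k$-MSP graph. One can instead organize this as an induction that adds one leaf at a time, with an induction hypothesis asserting that each spine scale has unused ``room'' alongside it, built on a direct representation of the spine path as a triangulated strip.

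For the obstruction, suppose $T$ is a tree that is a $k$-MSP graph, realized by a $k$-lizard $L$, and suppose $T$ is not a caterpillar. Then $T$ contains an induced copy of the spider $S(2,2,2)$: a vertex $c$ joined to $a_1,a_2,a_3$, each $a_i$ joined to a private vertex $b_i$, with all seven vertices distinct. Let $C,A_i,B_i$ be the corresponding scales. Because $T$ is a tree there are no unexpected adjacencies among these vertices, and --- crucially --- no vertex of $T$ is adjacent to both $c$ and $a_i$ or to both $a_i$ and $b_i$, since a common neighbour of two adjacent vertices would produce a triangle. Hence every induced path among these vertices satisfies the hypotheses of Lemmas~\ref{parallel sides lemma} and~\ref{parallel sides corollary}. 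Applying those to the induced paths $a_i\,c\,a_j$ shows that $C$ has a single pair of parallel opposite sides $\rho$ and $\rho'$ such that, for each $i$, the two points of $\boundary(A_i)\cap\boundary(C)$ lie one on $\rho$ and one on $\rho'$. Since the $A_i$ are pairwise interior-disjoint and each meets $\interior(C)$, the regions $A_i\cap C$ are three disjoint convex slabs of $C$, each reaching from $\rho$ to $\rho'$, hence linearly ordered along $C$, say $A_1\cap C$, then $A_2\cap C$, then $A_3\cap C$.

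The finish is to force a scale meeting the interiors of all of $A_1$, $C$, and $A_3$: its vertex $d$ is then adjacent to all three of $a_1$, $c$, $a_3$, so $d\notin\{a_1,c,a_3\}$ (no vertex is adjacent to itself), and since $a_1$ and $a_3$ are already joined through $c$, the vertices $a_1,c,a_3,d$ contain a cycle, contradicting that $T$ is a tree. To build such a scale I would first use Lemmas~\ref{reflex lemma} and~\ref{proto-scales-lemma} to pin the geometry down: at least one of the two crossing points of $\boundary(A_i)$ with $\boundary(C)$ must be a reflex vertex of $L$ of angle exactly $\theta_{k+1}$, since otherwise the reflex ``pocket'' of $A_i\cup C$ there lies inside $L$ and Lemma~\ref{proto-scales-lemma} yields an extra scale meeting $\interior(A_i)$ and $\interior(C)$, a common neighbour of the adjacent vertices $a_i$ and $c$ and hence already impossible. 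Then, using that $A_1$ escapes $C$ on the side of $A_1\cap C$ carrying $B_1$ (so that $C\setminus A_1$ is reached from $\interior(A_1)$ by allowed-direction rays), I would apply Lemma~\ref{Ray Lemma} to the path $a_1\,c\,a_3$ to produce the required scale, with Lemma~\ref{two-proto-scales-lemma} invoked on the lines through $\rho$ and $\rho'$ to close off the remaining tight configurations.

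I expect the main obstacle to be precisely this last step: verifying the ray hypothesis of Lemma~\ref{Ray Lemma} in all configurations, and in particular eliminating the degenerate possibilities --- overlaps $A_i\cap C$ too thin for any allowed direction to point cleanly out of $C$, or vertices of $A_i$ of the sharpest angle $\theta_1$ sitting inside $C$ --- which requires the scale-counting of Lemmas~\ref{proto-scales-lemma} and~\ref{two-proto-scales-lemma} together with a careful case split. The conceptual skeleton (the parallel-side structure of $C$ forced by Lemmas~\ref{parallel sides lemma}--\ref{parallel sides corollary}, then a cycle manufactured by an unavoidable extra scale) is clean; the bookkeeping of those degenerate cases, and the no-spurious-scales check in the construction, are where the real work lies.
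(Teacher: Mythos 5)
There is a genuine gap in the construction half: your leaf gadget fails for exactly the values of $k$ the theorem concerns. If a thin scale $S$ enters and leaves $V_i$ through two opposite long sides, then $\boundary(S)$ crosses $\boundary(V_i)$ at four reflex vertices of $L$, and the two notches cut on one flank of $S$ (one above $V_i$, one below) have angles summing to $\pi$; if the long side of $S$ has direction $\theta_j$ relative to $V_i$, the interior angles of $L$ at those two vertices are $\theta_{k+j}$ and $\theta_{k+(k-j)}$. Both equal $\theta_{k+1}$ only when $j=1$ and $k-j=1$, i.e.\ $k=2$. For $k\ge 3$ one of them is at least $\theta_{k+2}$, so Lemma~\ref{proto-scales-lemma} produces a point in the interiors of at least three scales, hence a triangle in the MSP graph --- fatal for a tree. (This is why the transversal crossing is fine for polyominoes but not here; your parenthetical ``possible precisely because $k\ge 3$'' has it backwards.) Your reading of Lemma~\ref{end regions lemma} is correct that a third neighbour's scale must disconnect $V_i$, but disconnecting $V_i$ does not require exiting through both sides: the paper attaches a triangle $T$ to \emph{one} long side of $V_i$, meeting it at interior angles $\theta_{k+1}$, so that the leaf's scale is $T$ extended across $V_i$ to the opposite long side, which at that location lies on $\boundary(L)$. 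This still disconnects $V_i$ but creates only two reflex vertices, both of angle exactly $\theta_{k+1}$, so no spurious scales arise.

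For the obstruction half you use the same forbidden subtree (your $S(2,2,2)$ is the paper's $S_2$) and the same structural lemmas, and the slab picture of $C$ you derive from Lemmas~\ref{parallel sides lemma} and~\ref{parallel sides corollary} matches the paper. But the decisive step --- actually manufacturing the forbidden extra scale --- is precisely what you leave open, and the paper closes it by a different mechanism. Instead of forcing a scale through $A_1$, $C$, and $A_3$ via Lemma~\ref{Ray Lemma} (whose ray hypothesis, as you concede, is hard to verify across the length of $C$, and which can be satisfied by $C$ itself), the paper works at the \emph{middle} slab (Theorem~\ref{seagull-theorem}): since $A_2\cap C$ cannot be an end $2$-region, $C-A_2$ is disconnected, so the two parallel sides of $A_2$ incident with $C$ both meet the same parallel side of $C$, creating reflex angles $\theta_{k+i}$ and $\theta_{2k-i}$ at two points joined along that line; Lemma~\ref{two-proto-scales-lemma}, or the explicit triangle construction when those points are interior to $L$, then yields at least $i(k-i)\ge k-1\ge 2$ distinct scales meeting both $C$ and $A_2$, at least one of which is a new common neighbour of the adjacent vertices $c$ and $a_2$ --- a triangle. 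So the skeleton you describe is sound, but the step you defer as ``the real work'' is the entire proof, and the route you propose for it is not the one that succeeds.
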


\begin{proof}
To construct a caterpillar $G$ as a $k$-MSP graph, we first construct the spine using skinny parallelograms meeting at alternating $\theta_{k+1}$ and $\theta_{k-1}$ angles, as shown in Figure~\ref{caterpillar}.  Each parallelogram $A$ is a scale, representing a vertex $a$ on the spine of $G$.  We add triangles on each parallelogram $A$, one for each leaf of $G$ not on the spine incident to $a$, meeting the sides of $A$ at $\theta_{k+1}$ angles to ensure no additional scales are formed.  We choose $A$ long enough to fit these triangles so they don't overlap.

Conversely, we prove in Theorem~\ref{seagull-theorem} below that the graph $S_2$ shown in Figure~\ref{seagulls} is not a $k$-MSP graph for $k>2$.  Since every non-caterpillar tree contains $S_2$ \cite{west2001introduction}, no non-caterpillar trees are $k$-MSP graphs for any $k \geq 3$.
\end{proof}

\subsection{Cycles}

\begin{theorem}
No cycle $C_j$ is a $k$-MSP graph, for any $j \geq 4$ and for any $k \geq 2$. \label{cycle-theorem}
\end{theorem}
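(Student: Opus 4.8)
The plan is to suppose for contradiction that some $C_j$ with $j \ge 4$ is a $k$-MSP graph, realized by a $k$-lizard $L$ with scales $S_1,\dots,S_j$, where $S_i$ corresponds to the cycle vertex $v_i$ and all indices are read cyclically. First I would record the structural consequences of the earlier lemmas. Since $j\ge4$, $C_j$ is triangle-free and no two consecutive vertices have a common neighbor, so for each $i$ the triple $(S_{i-1},S_i,S_{i+1})$ satisfies the hypotheses of Lemma~\ref{parallel sides corollary}: $S_i$ has a pair of parallel sides $e_i,f_i$ with the property that $\boundary(S_i)$ meets $\boundary(S_{i-1})$ in one point of $e_i$ and one of $f_i$, and likewise for $\boundary(S_{i+1})$. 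Hence the arc $\boundary(S_{i\pm1})\cap S_i$ crosses the ``slab'' of $S_i$ from $e_i$ to $f_i$, so $S_i-S_{i\pm1}$ is connected; and since $j\ge4$, $\interior(S_i\cap S_{i\pm1})$ meets no scale other than $S_i$ and $S_{i\pm1}$. So each $S_i\cap S_{i+1}$ is an end $2$-region of both $S_i$ and $S_{i+1}$, and by Lemma~\ref{end regions lemma} these exhaust the end $2$-regions of $S_i$; since $\interior(S_{i-1})$ and $\interior(S_{i+1})$ are disjoint, $S_i\cap S_{i-1}$ and $S_i\cap S_{i+1}$ sit at the two opposite ends of $S_i$, each capping its end across the full width between $e_i$ and $f_i$.

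Next I would use the fact that $S_{i-1}$ reaches beyond $S_i$: $\interior(S_{i-1})\setminus S_i\ne\emptyset$, since it contains interior points of the other end $2$-region $S_{i-2}\cap S_{i-1}$ of $S_{i-1}$, and similarly $\interior(S_{i+1})\setminus S_i\ne\emptyset$. The goal is to produce a single scale $D$ distinct from $S_{i-1},S_i,S_{i+1}$ whose interior meets all three of $\interior(S_{i-1}),\interior(S_i),\interior(S_{i+1})$; such a $D$ would correspond to a vertex of $C_j$ with three distinct neighbors, which is impossible in a $2$-regular graph. To build $D$ I would pick a point $p$ on $\boundary(S_i)$ lying in $\interior(S_{i+1})$ near the end $2$-region $S_i\cap S_{i+1}$, so that $p\notin S_{i-1}$ and hence $p\in\boundary(S_i-S_{i-1})$, and then verify the hypothesis of Lemma~\ref{Ray Lemma} for $(A,B,C)=(S_{i-1},S_i,S_{i+1})$: from every point of $\boundary(S_i-S_{i-1})$ there is a ray in an allowed direction, not along that boundary, that hits $S_{i-1}$ --- the natural candidate being a ray aimed ``toward the $S_{i-1}$ end'' of the slab, which works precisely because $S_i\cap S_{i-1}$ caps the full width of that end. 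Lemma~\ref{Ray Lemma} then supplies a segment through $p$, in an allowed direction, inside $\interior(L)$, meeting $\interior(S_{i-1}),\interior(S_i),\interior(S_{i+1})$; choosing its far endpoint in the nonempty set $\interior(S_{i-1})\setminus S_i$ (which abuts the end region) and thickening slightly to a thin parallelogram in a second allowed direction inside $\interior(L)$, Lemma~\ref{Convex Subset Lemma} places that parallelogram --- and hence a point of $\interior(S_{i-1})$ outside $S_i$, a point of $\interior(S_i)$, and a point of $\interior(S_{i+1})$ --- inside one scale $D$, necessarily distinct from $S_{i-1},S_i,S_{i+1}$. That is the contradiction.

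The step I expect to be the main obstacle is verifying the ray hypothesis of Lemma~\ref{Ray Lemma} uniformly over all of $\boundary(S_i-S_{i-1})$, in particular at points lying on the sides $e_i$ or $f_i$ of $S_i$ themselves, where the obvious ``along the slab'' ray is disallowed and one must instead exhibit an allowed direction that still reaches the capping region $S_i\cap S_{i-1}$ without first exiting $S_i$ through the far parts of $e_i$ or $f_i$. This is where the precise angular geometry enters: one wants to use that $S_i\cup S_{i-1}$ and $S_i\cup S_{i+1}$ each have their reflex angle equal to exactly $\theta_{k+1}$ at the relevant vertex of $L$ --- by the argument in the proof of Lemma~\ref{parallel sides lemma}, since a larger reflex angle or an extra common neighbor would give, via Lemma~\ref{proto-scales-lemma}, three pairwise-intersecting scales and hence a triangle in $C_j$ --- which pins down the shape of the slab's ends tightly enough that a suitable allowed ray always exists. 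A secondary, routine point is the thickening/perturbation argument used to pass from ``the segment meets $\interior(S_\ell)$'' to ``the scale $D$ meets $\interior(S_\ell)$'' and to keep the far endpoint strictly outside $S_i$; these are straightforward once the main ray claim is established.
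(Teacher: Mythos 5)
Your overall strategy---manufacture a fourth scale $D$ whose interior meets $\interior(S_{i-1})$, $\interior(S_i)$, and $\interior(S_{i+1})$, hence a vertex of degree $3$ in $C_j$---is workable in outline, but the step you yourself flag as the main obstacle is a genuine gap, and in the form you propose it cannot be closed. Lemma~\ref{Ray Lemma} carries a \emph{universal} hypothesis: a suitable ray must exist from every point of $\boundary(S_i-S_{i-1})$, because the lemma's proof selects its own point $p$ there; you do not get to hand it your preferred $p$. At a point $x$ in the relative interior of one of the long parallel sides $e_i$, far from the $S_{i-1}$ end of the slab, the only allowed direction that can traverse the length of the slab is the slab direction itself, which is along $\boundary(S_i-S_{i-1})$ and therefore forbidden; every other allowed ray exits $S_i$ through the opposite side $f_i$ after a bounded distance, and even if, as a ray in the plane, it later passes through the region occupied by $S_{i-1}$, the proof of Lemma~\ref{Ray Lemma} needs the segment $pq$ to lie in $\interior(L)$, which then fails. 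The exact $\theta_{k+1}$ angles at the slab ends say nothing about these mid-slab boundary points, so your proposed rescue does not apply. The repair is to bypass the lemma and run its one-point argument at a point you control: take $p$ on the transverse end-cap side of $S_i$ at the $S_{i+1}$ end (not on $e_i$ or $f_i$), so $p\in\interior(S_{i+1})$ and $p\notin S_{i-1}$, and take the segment from a point just beyond $p$ in $\interior(S_{i+1})\setminus S_i$, parallel to $e_i$, across the slab into $\interior(S_{i-1})\setminus S_i$ (reachable because $S_i\cap S_{i-1}$ caps the full width of that end). Lemma~\ref{Convex Subset Lemma} then puts this segment in a scale $D$ distinct from $S_{i-1}$, $S_i$, $S_{i+1}$, giving your contradiction.

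Even once repaired, this is a far heavier route than the paper's, which uses none of Lemma~\ref{parallel sides corollary}, Lemma~\ref{end regions lemma}, or Lemma~\ref{Ray Lemma}. The paper takes a side $s$ of $X_2$ incident with $X_1$ that lies in $\interior(L)$, observes that since $\interior(X_1)$ and $\interior(X_3)$ are disjoint some point $p$ of $s$ lies in the interior of neither, and notes that such a $p$ is an interior point of $L$ interior to no scale at all (any scale whose interior contains $p\in\boundary(X_2)$ would meet $\interior(X_2)$ and hence be $X_1$ or $X_3$)---contradicting the fact, immediate from Lemma~\ref{Convex Subset Lemma} applied to a small parallelogram around $p$, that every interior point of $L$ is interior to some scale. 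In short, the paper's contradiction is ``an interior point of $L$ is uncovered,'' whereas yours is ``some vertex of $C_j$ has degree at least $3$''; the former needs only the disjointness of $\interior(X_1)$ and $\interior(X_3)$, while yours requires the full parallel-sides and end-region machinery plus the ray construction.
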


\begin{proof}
Suppose by way of contradiction that $L$ is a $k$-lizard with $k$-MSP graph $C_j$.  Label the vertices of $C_j$ by $x_1$, $\ldots$, $x_j$, and the corresponding scales of $L$ by $X_1$, $\ldots$, $X_j$.

Consider the sides of $X_2$ incident to $X_1$.  Since $X_2$ is not a cut vertex of $C_j$, one of these sides, say $s$, must be in the interior of $L$, as shown on the right in Figure~\ref{cycle-proof-figure}.  Since $\interior(X_1)$ and $\interior(X_3)$ are disjoint, there is a point $p$ on $s$ not in the interior of either $X_1$ or $X_3$.  Since $p \in \boundary(X_2)$ and $X_2$ intersects no scale other than $X_1$ and $X_3$, $p$ is not contained in the interior of any other scale of $L$.  But $p \in \interior(L)$, which is a contradiction.
\end{proof}

\subsection{Chordless cycles}

Recall that a \textit{\textbf{chordless $j$-cycle}} is an induced subgraph isomorphic to $C_j$ \cite{west2001introduction}. In this subsection we consider the question, for which positive integers $k$ and $j$ does there exist a $k$-MSP graph with a chordless $j$-cycle?  Shearer proved that no intersection graph of the maximal rectangles of a polyomino has a chordless $j$-cycle for $j>4$, and this proof is easily adapted to $2$-MSP graphs \cite{maire}.

\begin{theorem}[Shearer]
No $2$-MSP graph contains a chordless $j$-cycle for $j>4$.
\end{theorem}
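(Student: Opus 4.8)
The plan is to observe that, when $k=2$, this statement is --- up to the inessential hypothesis of integer side lengths --- exactly Shearer's theorem about the maximal rectangles of a polyomino \cite{shearer,maire}, and then to transcribe his proof into the language of scales.

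First I would unpack $k=2$: the allowed directions are $\theta_0$ (horizontal) and $\theta_1$ (vertical), so every $2$-lizard is an orthogonal polygon, and a convex $2$-lizard is exactly an axis-parallel rectangle. Hence the scales of a $2$-lizard $L$ are precisely the maximal axis-parallel rectangles contained in $L$, the $2$-MSP graph of $L$ is exactly the intersection graph $G(L)$ of the maximal rectangles of $L$ studied by Berge, Chen, Chv\'atal, and Seow and by Shearer, and Shearer's argument --- which nowhere uses integrality --- applies unchanged.

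To present that argument I would first record the two structural facts it rests on. Writing a scale as $I\times J$, with $I$ its horizontal and $J$ its vertical projection: (i) adjacency is coordinatewise, i.e.\ scales $A$ and $B$ are adjacent iff both $I_A\cap I_B$ and $J_A\cap J_B$ have nonempty interior; and (ii) by maximality together with Lemma~\ref{reflex lemma}, adjacent scales $A,B$ have $A\cup B$ non-convex, whence neither contains the other and $I_A\ne I_B$, $J_A\ne J_B$. Now assume for contradiction that scales $X_1,\dots,X_j$ with $j\ge 5$ induce a chordless cycle, with $X_i\sim X_{i+1}$ cyclically and no other adjacencies among them. I would choose an extremal scale --- say $X_1$ attains the largest bottom coordinate $b_i$ (ties broken, e.g., by smallest height) --- and study the cycle relative to the line $y=b_1$. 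Since $b_i\le b_1$ for all $i$, any non-neighbor $X_k$ of $X_1$ must, because $X_k\not\sim X_1$ and the ``$X_k$ lies above $y=b_1$'' alternative is excluded by $b_k\le b_1$, either lie at or below $y=b_1$ or have horizontal projection disjoint from $I_1$. Because $j\ge 4$, $X_2$ and $X_j$ are non-adjacent, and (generically) their vertical projections both straddle $y=b_1$, so $I_2\cap I_j$ has empty interior; say $I_2$ lies left of $I_j$. Then adjacency of $X_1$ to both forces $I_1$ to bridge the gap between $I_2$ and $I_j$ and poke past each, while maximality of $X_1$ downward forces its bottom edge to meet $\boundary(L)$ over that gap, i.e.\ produces a notch of $\boundary(L)$ immediately beneath $X_1$. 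Following the path $X_2,X_3,\dots,X_j$ under these constraints --- its intermediate scales confined below $y=b_1$ or outside the horizontal span of $X_1$, and the notch blocking passage beneath $X_1$ --- is where Shearer's argument yields the contradiction for $j\ge 5$: closing the cycle forces either an unwanted overlap among the $X_i$ (a chord) or a local configuration of $\boundary(L)$ incompatible with $L$ being a simple polygon, which is exactly the step that separates $j\ge 5$ from the surviving case $C_4$.

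The main obstacle is this final case analysis: since the abstract graph $C_j$ has boxicity $2$ and hence \emph{is} realizable by non-maximal rectangles, the entire contradiction must be squeezed out of maximality, so one has to track the horizontal and vertical projections of all $j$ scales simultaneously and rule out every way the cyclic overlap pattern could close up. A secondary difficulty is degeneracy --- ties in the extremal choice, and collinear edges belonging to distinct scales (as in Lemma~\ref{two-proto-scales-lemma}) --- which must be dealt with so that the ``$A\cup B$ non-convex'' dichotomy and the ``blocked bottom edge'' step remain valid.
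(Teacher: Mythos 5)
The paper offers no proof of this theorem: it is stated as Shearer's result, with the remark that his proof for polyominoes ``is easily adapted to $2$-MSP graphs,'' citing Shearer and Maire. Your opening reduction is exactly that adaptation and is correct as far as it goes: for $k=2$ the allowed directions are horizontal and vertical, a $2$-lizard is an orthogonal simple polygon, its scales are its maximal axis-parallel rectangles, and interior-intersection adjacency decomposes coordinatewise. You are also right that one cannot cite Shearer's theorem as a black box --- $2$-lizards are not polyominoes (real side lengths, and a polyomino may enclose holes while a lizard is a simple polygon), so neither class of graphs is literally contained in the other --- which means the proof itself, not just the statement, must be checked to survive the generalization; noting that integrality is never used is the substantive content behind the paper's one-line remark.

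However, as a self-contained argument your proposal has a genuine gap at exactly the decisive step. After the extremal choice of $X_1$ and the observation that $I_2$ and $I_j$ must be horizontally separated, the contradiction for $j\ge 5$ is asserted rather than derived: ``following the path $X_2,\dots,X_j$ \dots is where Shearer's argument yields the contradiction'' and ``closing the cycle forces either an unwanted overlap \dots or a local configuration \dots incompatible with $L$ being a simple polygon'' are pointers to a proof, not a proof, and you flag this case analysis yourself as the main obstacle. Since that analysis is precisely what separates $j\ge 5$ from the realizable $C_4$, and since the whole point of re-deriving rather than citing Shearer is to verify that this step goes through for non-integer orthogonal polygons, the argument cannot be accepted without it. Two smaller points also need attention: the claim that $J_2$ and $J_j$ ``straddle'' $y=b_1$ only follows after the tie-breaking is spelled out (adjacency to $X_1$ gives $b_2\le b_1<t_2$, with equality possible), and the adjacency convention must be checked against the source, since the paper requires interiors to intersect while the polyomino literature sometimes counts boundary contact.
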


Conversely, Figure~\ref{3MSP induced cycles figure} shows a 3-lizard whose 3-MSP graph has a chordless 5-cycle.  In fact, Figure~\ref{3MSP induced cycles figure} also shows a 3-lizard whose 3-MSP graph has a chordless 12-cycle.  By removing notches, we can construct a 3-MSP graph with any induced cycle between 4 and 12.  We don't know if a 12-cycle is the largest induced cycle in any 3-MSP graph.  We pose this as an open question at the end of this paper.

Figure~\ref{large induced cycles figure}  shows a 4-lizard whose 4-MSP graph has a chordless 16-cycle.  Again, we don't know whether this is the largest induced cycle in any 4-MSP graph. Figure~\ref{large induced cycles figure} also shows a 5-lizard whose 5-MSP graph has a chordless 10-cycle, and this construction can be extended to obtain 5-MSP graphs with arbitrarily large cycles.  This construction doesn't work for 3-MSP or 4-MSP graphs, but does for $k$-MSP graphs with $k>5$.  We state this formally in the following proposition, and summarize our findings on chordless cycles in $k$-MSP graphs in Table~\ref{chordless cycle table}.

\begin{proposition}
For any integers $j>3$ and $k>5$, there is a $k$-MSP graph with an induced $j$-cycle.
\end{proposition}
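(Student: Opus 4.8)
The plan is to generalize the $5$-lizard of Figure~\ref{large induced cycles figure}, whose $5$-MSP graph has a chordless $10$-cycle, to a ``gear-shaped'' $k$-lizard $L$, one for each pair $j > 3$, $k > 5$. The lizard $L$ will be a ring of $j$ thin teeth $X_1, \dots, X_j$, placed cyclically and each pointing outward, together with further scales that fill in the region the ring encloses; each $X_i$ extends from the outer boundary of $L$ inward to a collection of reflex ``valley'' vertices. Along each straight stretch of the ring the teeth are congruent and laid side by side, overlapping consecutively in their interiors while teeth two apart meet only in a point. To close the ring the teeth must turn through a total angle of $2\pi$, which happens at a bounded number of corners. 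Since a convex $k$-lizard has at most $2k$ sides but may be arbitrarily long, we can pack arbitrarily many teeth along the straight stretches between corners, so $j$ is unbounded for fixed $k$; the turning at the corners is the step that requires enough allowed directions, which is why it works for $k = 5$ (as in the figure) and for all $k > 5$, but fails for $k = 3, 4$.

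I would then check four things. (i) $L$ is a genuine simple $k$-lizard: the edges lie in allowed directions, the outline closes up after turning through $2\pi$, and the teeth neither overlap incorrectly nor pinch $\interior(L)$; this is a finite computation with angles and side lengths. (ii) The teeth are precisely the scales meeting the outer ring: each $X_i$ is pinned by $\boundary(L)$ on the outside and by the reflex valley vertices separating it from $X_{i-1}$ and $X_{i+1}$, whose interior angles we arrange to equal $\theta_{k+1}$ so that by Lemma~\ref{proto-scales-lemma} no stray scale appears there, and $X_i$ cannot grow past the notches bounding the central region. (iii) The remaining scales of $L$ are those filling the central region; since $\bigcup_i X_i$ is not simply connected, such scales are forced --- consistent with Theorem~\ref{cycle-theorem}, which says the full $k$-MSP graph of $L$ is never itself a $C_j$. (iv) Among $\{x_1, \dots, x_j\}$ the adjacent pairs are exactly the consecutive ones: $\interior(X_i) \cap \interior(X_{i+1}) \neq \emptyset$ by construction, while for $|i - i'| \geq 2$ the teeth $X_i$ and $X_{i'}$ sit over disjoint arcs and their inward extensions are separated by notches, so their interiors are disjoint; the central scales touch several teeth but create no edge between two teeth. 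Hence $\{x_1, \dots, x_j\}$ induces a $C_j$ in the $k$-MSP graph of $L$.

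The construction as stated most naturally yields $C_j$ only once $j$ is at least the number of teeth needed to turn all the corners. To reach every $j > 3$, I would start from an instance with a large induced cycle and repeatedly remove a notch: deleting a single valley vertex of $L$ merges two consecutive teeth into one scale and shortens the induced cycle by one, so iterating produces every length down to $4$ --- the same reduction used to pass from a chordless $12$-cycle to a $4$-cycle in the $3$-MSP case. The main obstacle is steps (ii)--(iv): one must pick the side lengths so that no unexpected scale appears --- in particular none spanning three consecutive teeth, and none straddling a corner and reaching a far-away tooth, which would create a chord --- and so that each scale in the central region stays maximal instead of fragmenting or merging with a tooth. Arranging all of this at once is what forces $k > 5$ and makes the corner regions the delicate part of the argument.
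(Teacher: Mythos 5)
Your proposal takes essentially the same approach as the paper: the paper offers no written proof of this proposition at all, simply asserting that the gear-like construction of the $5$-lizard in Figure~\ref{large induced cycles figure} ``can be extended to obtain 5-MSP graphs with arbitrarily large cycles'' and ``does [work] for $k$-MSP graphs with $k>5$,'' which is exactly the generalization you describe. Your sketch is in fact more explicit than the paper about what must be verified (no stray scales, no chords, the notch-removal step to reach every $j>3$), though like the paper it stops short of carrying those verifications out.
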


\begin{table} \begin{center}
\begin{tabular}{ c | c }
Graph family & Largest induced cycle \\
\hline \hline
2-MSP & 4 \cite{maire}\\
3-MSP & $\geq 12$ \\
4-MSP & $\geq 16$ \\
5-MSP & unbounded
\end{tabular}
\end{center}
\caption{The largest chordless cycle possible in families of $k$-MSP graphs.} \label{chordless cycle table}
\end{table}

\begin{figure}
\begin{center}
\includegraphics[width=0.7\textwidth]{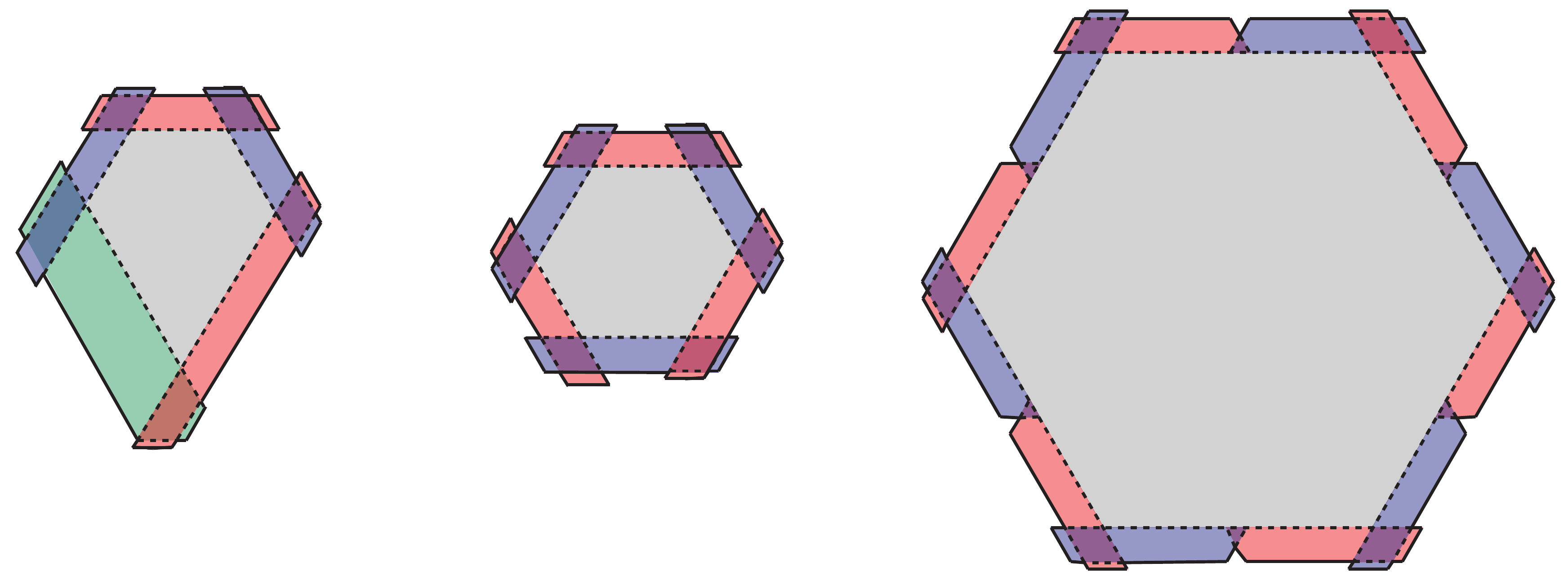}
\end{center}
\caption{Scales in 3-lizards that represent a 5-cycle, 6-cycle, and 12-cycle.}
\label{3MSP induced cycles figure}
\end{figure}

\begin{figure}
\begin{center}
\includegraphics[width=0.4\textwidth]{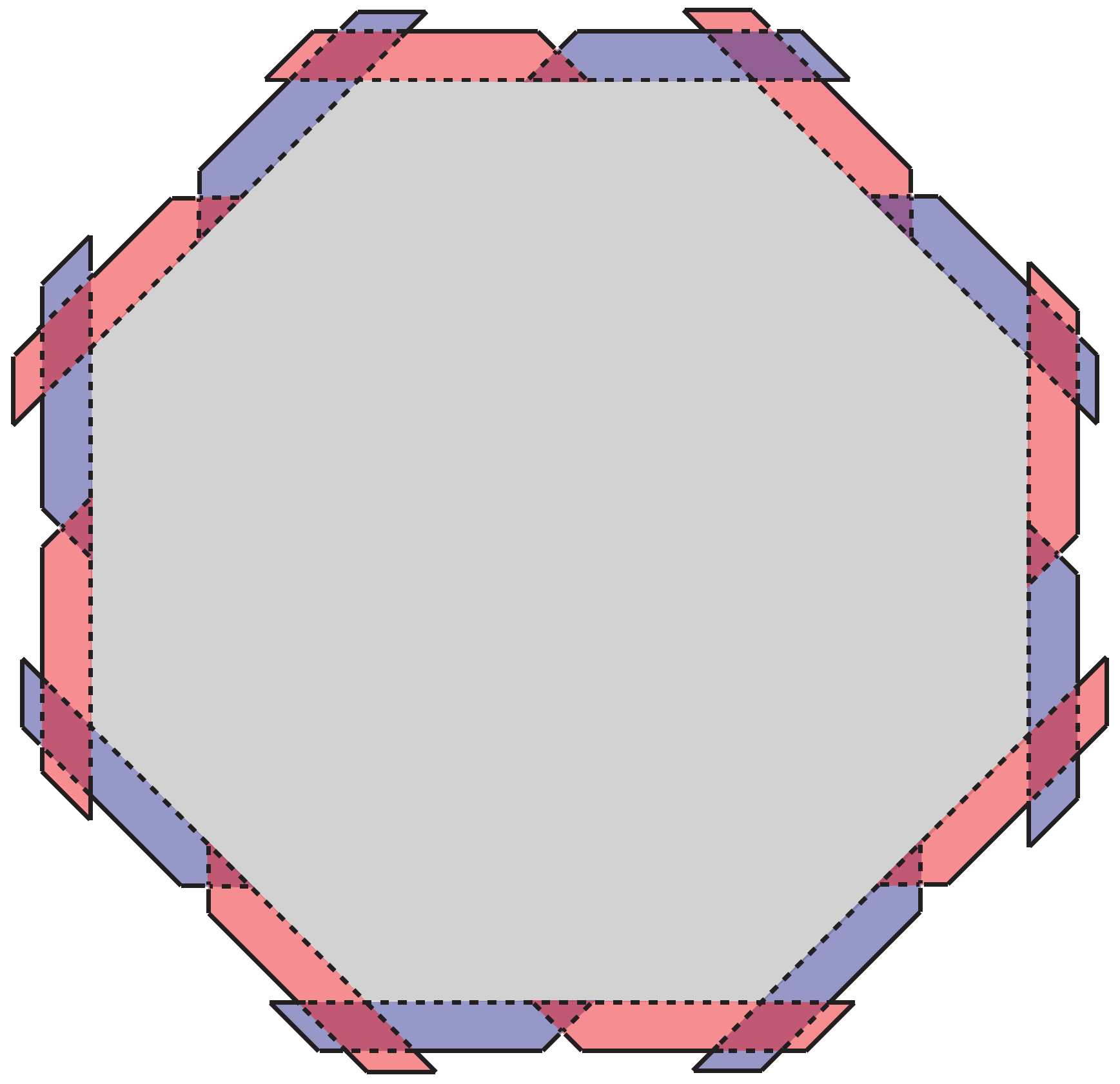} \hspace*{1cm} \includegraphics[width=0.4\textwidth]{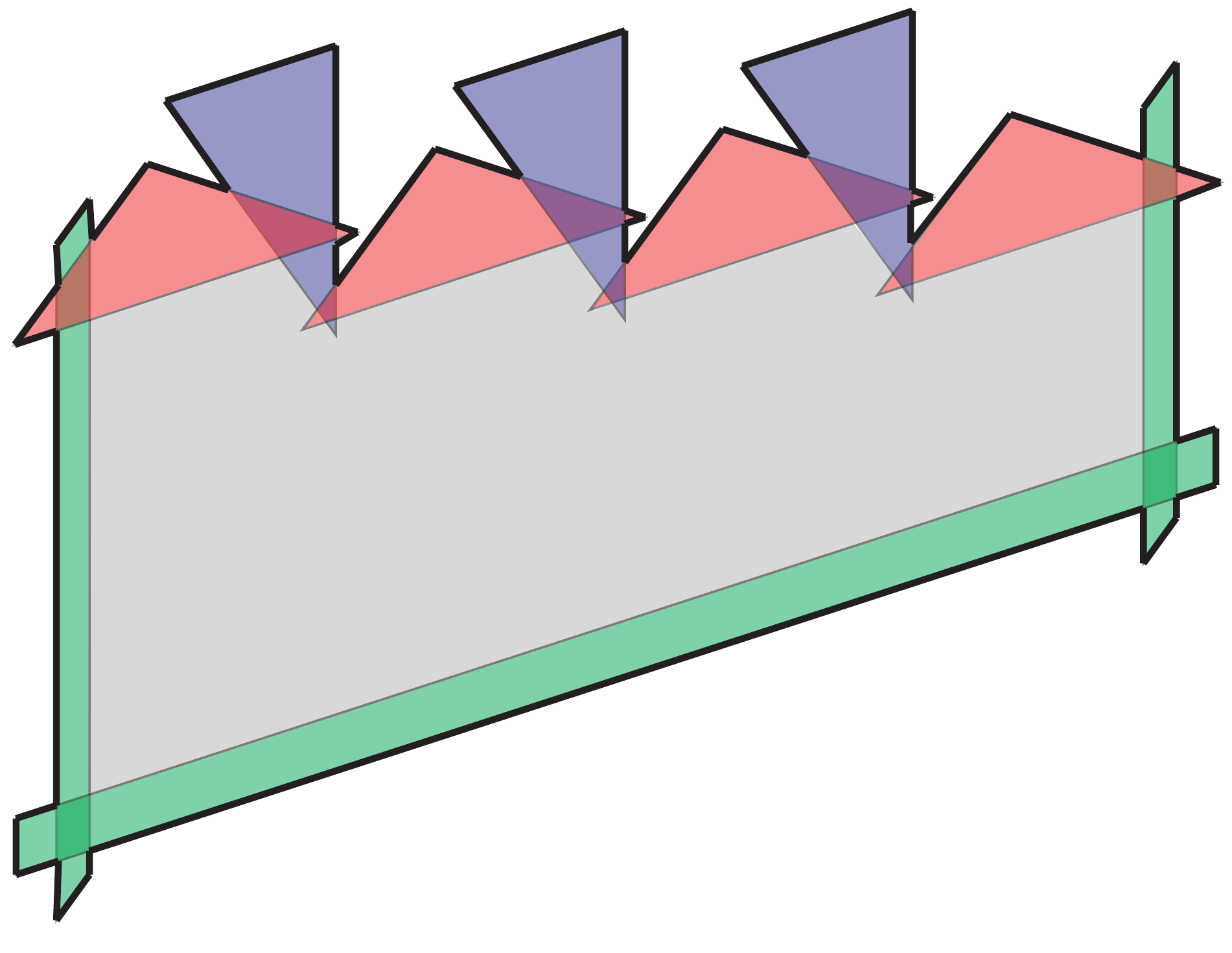}
\end{center}
\caption{On the left, scales in a 4-lizard that represent a 16-cycle.  On the right, scales in a 5-lizard that represent an arbitrarily large induced cycle.}
\label{large induced cycles figure}
\end{figure}

\section{Separating Examples}
\label{separating examples}

In this section we construct examples of graphs which are $k$-MSP graphs and not $j$-MSP graphs for all $k>1$, $j>1$, $k \neq j$.  We construct three families of graphs, the seagull graphs $S_k$, the turtle graphs $T_{n,k+1}$, and the seagull-turtle graphs $ST_{n,k}$.

We define the \textbf{\textit{seagull graph}} $S_n$ to be the complete graph $K_n$ with five additional vertices: two paths with two edges attached to one of the vertices of $K_n$, and one path with one edge attached to a different vertex of $K_n$.  We label the distinguished vertices of $S_n$ as $a$, $b$, $c$, $d$, $e$, $f$, and $g$, as shown in Figure~\ref{seagulls}.

\begin{figure}[ht]
\begin{center}
\includegraphics[width=0.6\textwidth]{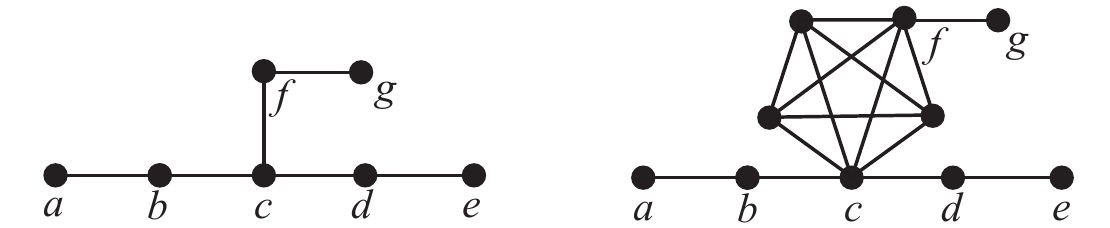}
\end{center}
\caption{The Seagull graphs $S_2$ (left) and $S_5$ (right).}
\label{seagulls}
\end{figure}

\begin{theorem} \label{seagull-theorem}
For positive integers $k$ and $j$ with $j>k>1$, the graph $S_k$ is not a $j$-MSP graph.
\end{theorem}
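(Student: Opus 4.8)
The plan is to suppose for contradiction that $S_k$ has a $j$-MSP representation $L$ with $j > k > 1$, and to derive a contradiction from the geometry forced by the pendant structure attached to the clique. The key observation is that the vertices $a, b, c, d, e$ of $S_k$ form two induced paths of length two hanging off a single clique vertex — call the clique vertex carrying the two paths $v$ with corresponding scale $V$, and let the two paths be $v, a, b$ and $v, c, d$ (adjusting to the labeling in Figure~\ref{seagulls}). The crucial point is that along each of these induced paths, the middle vertex has no common neighbor with its endpoints other than what's forced, so Lemma~\ref{parallel sides lemma} and Lemma~\ref{parallel sides corollary} apply: the two sides of the scale $A$ (for vertex $a$) incident with $V$ are parallel, and similarly the two sides of $A$ incident with $B$ are parallel and are the \emph{same} pair of parallel sides. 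Thus the scale $A$ is pinched between two pairs of parallel sides meeting at $\theta_{k+1}$-type reflex angles of $L$, which severely constrains its shape.

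First I would set up the local picture at $V$: since $V$ meets two disjoint pendant paths, Lemma~\ref{parallel sides lemma} forces the sides of $A$ incident with $V$ (and of $C$ incident with $V$) to be parallel, and the reflex angles of $L$ created where $A$ and $V$ overlap must be exactly $\theta_{k+1}$ by Lemma~\ref{proto-scales-lemma} (a larger reflex angle would spawn a third scale through $A$ and $V$, violating that $a$ and $v$ have no other common neighbor). Next I would show that the "free end" of $A$ — the part of $A$ not overlapping $V$ — terminates in an end $2$-region with $B$ (using Lemma~\ref{end regions lemma} to control how $A$'s overlaps are arranged: $A$ has at most two end $2$-regions, one with $V$ and one with $B$). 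The same analysis applies to $C$ and $D$. So both $A$ and $C$ are "doubly-pinched" parallelogram-like scales emanating from $V$ into disjoint regions, each pinched to width at most what a $\theta_{k+1}$ wedge allows.

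The heart of the argument is then a counting/angle argument at the clique: the scale $V$ must have, on its boundary, room for the two overlap regions $V \cap A$ and $V \cap C$ to be disjoint (since $a$ and $c$ are non-adjacent), \emph{plus} the overlap region with the third pendant path's scale (the path $f, g$ attached at a different clique vertex $w$ — here I'd use that $w \neq v$), \emph{plus} $V$ must overlap every other scale of the clique $K_k$ in its interior. I would argue that the parallel-sides constraint forces the sides of $V$ incident with $A$ to be a specific pair of directions, and likewise for $C$, and that because $L$ is a $j$-lizard with $j > k$, there simply aren't enough "wide-angle" reflex vertices available to simultaneously host $k$ mutually-overlapping clique scales \emph{and} the three pinched-off pendant scales with their forced $\theta_{k+1}$ reflex angles — in particular, the reflex angles forced by the pendants are $\theta_{j+1}$ in the $j$-lizard, which is a \emph{smaller} turning angle than $\theta_{k+1}$ would be, so the pendant scales cannot be pinched tightly enough to stay disjoint while the clique scales all mutually overlap near $V$. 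Concretely, I expect the contradiction to come from showing that $A$ (and $C$), being pinched between $\theta_{j+1}$ reflex angles, must extend far enough back into the clique region that it intersects the interior of some clique scale other than $V$, contradicting that $a$ is adjacent only to $v$ among clique vertices.

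The main obstacle will be making the final angle-counting step rigorous: translating the qualitative statement "the pendant parallelograms are too fat to avoid the clique scales when $j > k$" into a precise inequality on angles. I anticipate needing to carefully track, at the vertex $V$, the directions of the two incident sides of each of $A$, $C$, and the third pendant scale, use convexity of $V$ to bound the total angular "budget" of $\partial V$, and show that forcing $k$ clique scales to pairwise overlap in interiors near a common point (which the $K_k$ structure requires, by an argument like the one in Lemma~\ref{proto-scales-lemma} or the complete-graph construction) consumes angular room incompatible with three disjoint pendant attachments once the reflex angles are pinned to $\theta_{j+1}$ with $j > k$. A secondary technical point is verifying that the hypotheses of Lemma~\ref{parallel sides lemma} and Lemma~\ref{parallel sides corollary} genuinely hold — i.e. that in $S_k$ no vertex outside the relevant path is adjacent to both endpoints of a middle vertex — which follows directly from the definition of $S_k$ but should be stated explicitly for each path used.
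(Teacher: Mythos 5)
There is a genuine gap, and it sits exactly where you flag it: the ``final angle-counting step'' is not just unrigorous, it is aimed at the wrong obstruction. Your proposed contradiction is that the pinched pendant scales are too fat to avoid the clique scales once their reflex angles are pinned to $\theta_{j+1}$. But Theorem~\ref{seagull theorem 2} realizes $S_k$ as a $j$-MSP graph for every $2\le j\le k$ using precisely such pinched pendant parallelograms and trapezoids, so no contradiction can be extracted from the pendant geometry alone, and nothing in your sketch isolates a quantity that actually changes sign when $j$ crosses $k$. Relatedly, you apply Lemma~\ref{parallel sides corollary} to the middle path scales ($A$ for the path $v,a,b$), whereas the productive application is to the clique vertex's scale: the induced path $b$--$c$--$d$ forces $C-(B\cup D)$ to be a strip with one pair of parallel sides $s_1,s_2$, with $B\cap C$ and $D\cap C$ as its two end $2$-regions. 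Lemma~\ref{end regions lemma} then forbids $F\cap C$ from being a third end $2$-region, so $C-F$ is disconnected, i.e.\ the other labelled clique scale $F$ must cross the strip.

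The missing idea is what happens at that crossing. The two parallel sides of $F$ incident with $C$ both meet $s_1$, creating two reflex vertices of $C\cup F$ with angles $\theta_{j+i}$ and $\theta_{2j-i}$ for some $1\le i\le j-1$ (their sum is forced because the sides are parallel). The mechanism of Lemma~\ref{two-proto-scales-lemma} then produces at least $i(j-i)\ge j-1$ distinct scales meeting the interiors of both $C$ and $F$ near the crossing --- one triangle for each pair of proto-scale directions at the two reflex points, extended to maximality. At most one of these is $F$ itself, so at least $j-2$ vertices of $S_k$ are adjacent to both $c$ and $f$; but $S_k$ has exactly $k-2$ such vertices, and $j-2>k-2$. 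That clean count is where the hypothesis $j>k$ enters, rather than through any angular budget at the clique vertex. Your setup of the parallel-sides machinery is in the right spirit, but without redirecting it to the strip structure of $C$ and the forced multiplicity of scales where $F$ traverses it, the argument does not close.
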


\begin{proof}
Label $S_k$ as in Figure~\ref{seagulls}.  Suppose by way of contradiction that $L$ is a $j$-lizard with MSP-graph $S_k$.  Label the scales $A$, $B$, $C$, $D$, $E$, $F$, and $G$ in $L$ corresponding to the vertices $a$, $b$, $c$, $d$, $e$, $f$, and $g$ in $S_k$.

By Lemma~\ref{parallel sides corollary}, the region $C-(B\cup D)$ has parallel sides.  If $B \cap C$ is not an end region of $C$, then there is another scale intersecting both $B$ and $C$.  Since this would contradict the construction of $L$, $B\cap C$ is a 2-end region of $C$, and similarly $D \cap C$ is a 2-end region of $C$.  Since $F \cap C$ doesn't intersect the interiors of $B$ or $D$, and by Lemma~\ref{end regions lemma} $F \cap C$ is not a 2-end region, $C-F$ is disconnected.  By Lemma~\ref{parallel sides lemma}, the sides $s_3$ and $s_4$ of $F$ incident with $C$ are parallel, and they must intersect one of the parallel sides of $C$, $s_1$.   Call these two vertices of $C \cup F$ $p_1$ and $p_2$.  Since $s_3$ and $s_4$ are parallel, the sum of the internal angles of $C \cup F$ is $3\pi$, as shown in Figure~\ref{seagull-contradiction}.  In what follows, assume $s_1$ is horizontal.

The angles at $p_1$ and $p_2$ are both reflex.  Suppose the angle at $p_1$ is $\theta_{j+i}$, for some integer $i$ with $1 \leq i \leq j-1$.  Then the angle at $p_2$ is $\theta_{2j-i}$.  If $p_1$ and $p_2$ are vertices of $L$, by Lemma~\ref{two-proto-scales-lemma} there is a point $x$ near $p_1$ in the intersection of at least $i(j-i)$ scales.

Suppose at least one of $p_1$ and $p_2$ is internal to $L$.  We may still form proto-scales at these points in the polygon $C \cup F$, even though they're not proto-scales of $L$.  For each pair of a proto-scale $q_1$ at $p_1$ and $q_2$ at $p_2$, extend these line segments down until they end at $\boundary(L)$.  If they intersect $B$ or $D$, then there is a scale intersecting both $B$ and $F$, or both $D$ and $F$, contradicting the construction of $L$.  So they must end at the other parallel side $s_2$ of $C$.  Now we extend $q_1$ and $q_2$ up until they intersect each other, forming a triangle $T$ with a base along $s_2$, as shown in Figure~\ref{seagull-contradiction}.  Say the left and right sides of $T$ are $s_5$ and $s_6$, respectively.

\begin{figure}[H]
\begin{center}
\includegraphics[width=.45\textwidth]{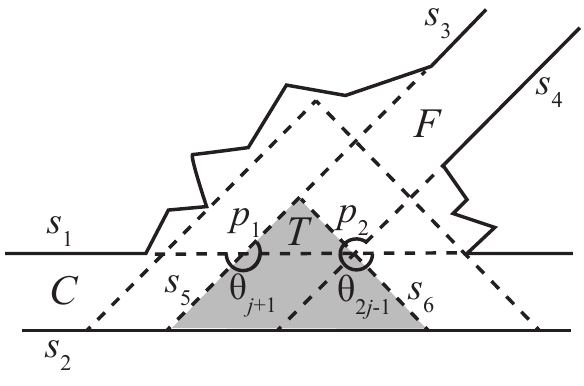}
\end{center}
\caption{The construction of the triangle $T$ in the proof of Theorem~\ref{seagull-theorem}.}
\label{seagull-contradiction}
\end{figure}

Now we enlarge $T$ by moving $s_5$ and $s_6$ left and right until they touch $\boundary(L)$, as shown in Figure~\ref{seagull-contradiction}.  This new triangle is a scale of $L$.  By the reasoning of the proof of Lemma~\ref{two-proto-scales-lemma}, there are $i(j-i)$ of these scales.  Since they are all triangles whose sides have different allowed directions, they are distinct.

Since $1 \leq i \leq j-1$, the expression $i(j-i)$ is minimized when $i=1$ or $i=j-1$, when $i(j-i)=j-1$.  So there are at least $j-1$ of these scales.  One of them may be $F$, but at least $j-2$ of these scales are distinct from $F$ and $C$, and intersect both $F$ and $C$.  Therefore there are at least $j-2>k-2$ vertices in $S_k$ adjacent to both $c$ and $f$.  Since $S_k$ has only $k-2$ such scales, this is a contradiction.

Thus $S_{k}$ is not a $j$-MSP graph.
\end{proof}

\begin{theorem} \label{seagull theorem 2}
$S_k$ is a $j$-MSP graph for $2\leq j \leq k$.
\end{theorem}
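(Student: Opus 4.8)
The plan is to build, for each $j$ with $2\le j\le k$, a $j$-lizard $L$ whose $j$-MSP graph is $S_k$, by realizing the clique $K_k$ as a $j$-MSP graph and then grafting the five pendant vertices on with local gadgets of the kind used to add leaves in the proofs of Theorems~\ref{2 tree theorem} and~\ref{caterpillar theorem}. The shape of the two scales carrying pendants is constrained by $S_k$: applying Lemma~\ref{parallel sides corollary} to the induced path $b,c,d$ with $c$ in the middle (no vertex other than $b,c$ is adjacent to both $b$ and $c$, and likewise for $c,d$), the regions $B\cap C$ and $D\cap C$ must be end $2$-regions occupying opposite ends of a sub-polygon of $C$ bounded by two parallel sides, and $C$ has at most two end $2$-regions by Lemma~\ref{end regions lemma}. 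Accordingly I would first arrange the $K_k$ realization so that $C$ is a long thin parallelogram whose central part is where all the pairwise clique overlaps occur and whose two short ends lie on portions of $\boundary(L)$ met by no other scale, and so that the scale $F$ likewise has two parallel sides separated by a free portion of $\boundary(L)$; the remaining $k-2$ clique scales carry no pendants, so they may have any convex shape, subject only to all $k$ clique scales sharing a common small region and no extra scale being created.

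Next I would attach the pendant paths one scale at a time. Along a free portion of $\boundary(S)$ of a scale $S$, I would modify $L$ by adjoining a thin protrusion meeting $\boundary(S)$ so as to create a reflex angle of measure exactly $\theta_{j+1}$, the smallest reflex angle (a larger one would, by Lemma~\ref{proto-scales-lemma}, force more than one new scale). As in the leaf gadgets of Theorems~\ref{2 tree theorem} and~\ref{caterpillar theorem}, this introduces exactly one new scale $S'$, which by Lemma~\ref{Convex Subset Lemma} contains the proto-scale through the new vertex; $S'$ meets $\interior(S)$ in a sliver near that vertex and, the protrusion being thin and the chosen boundary portion being far from every other scale, meets nothing else. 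One application at each short end of $C$ produces $B$ and $D$; a further application on a free portion of $\boundary(B)$ produces the leaf $A$, and similarly $E$ off $D$; one application off $F$ produces $G$. (For $j\ge 3$ one may finish the terminal scales $A$, $E$, $G$ with the triangular leaf gadget of Theorem~\ref{caterpillar theorem}; for $j=2$ one uses the $Y$-gadget of Theorem~\ref{2 tree theorem} throughout.) Since every gadget is supported in a small neighborhood of $\boundary(L)$ and leaves the common clique-overlap region untouched, the $k$ clique scales remain pairwise interior-intersecting and the new scales receive exactly the adjacencies prescribed by $S_k$, so the $j$-MSP graph of $L$ is $S_k$.

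The step I expect to be the main obstacle is the first one: producing a $K_k$ realization as a $j$-MSP graph in which the designated scales $C$ and $F$ simultaneously have the parallelogram shape described above and carry disjoint free portions of $\boundary(L)$ (two on $C$, each long enough to host a non-interfering gadget, and one on $F$), uniformly for every $j$ with $2\le j\le k$ even though only $j$ slope-directions are available for $k$ mutually overlapping scales. This is a modest strengthening of the fact that every complete graph is a $j$-MSP graph, and the work is to check that the construction behind that fact, suitably scaled up and with the relevant triangular scales replaced by long thin parallelograms, indeed furnishes scales of the required form. Two further routine checks remain: that no pair of collinear sides introduced by the gadgets triggers additional scales through Lemma~\ref{two-proto-scales-lemma}, and that each gadget really creates only its one intended new scale; both are secured by keeping every gadget sufficiently thin and localized near $\boundary(L)$.
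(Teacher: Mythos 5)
The gap is at the step you yourself flag as ``the main obstacle,'' and it is not a modest strengthening of the complete-graph construction --- it is the whole content of the theorem. Once $B\cap C$ and $D\cap C$ occupy the two end $2$-regions of $C$ (Lemma~\ref{end regions lemma}), every other clique scale, and in particular $F$, must meet $C$ in its central portion, entering and leaving through the long sides of the parallelogram $C$. Each such entry point is a reflex vertex, and Lemmas~\ref{proto-scales-lemma} and~\ref{two-proto-scales-lemma} then force additional scales there: if the reflex angles where $F$ crosses a long side of $C$ measure $\theta_{j+i}$ and $\theta_{2j-i}$, at least $i(j-i)\geq j-1$ scales meet both $C$ and $F$. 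So you cannot first fix a $K_k$ realization ``with no extra scale being created'' and independently insist that $C$ have two free ends: the very crossings that make the clique scales pairwise intersect necessarily generate scales you did not list, and the construction must arrange for these forced scales to be precisely the remaining clique vertices. Your outline never does this accounting, and with only $j$ available directions it is far from clear that $k$ mutually overlapping scales can all meet the central portion of a two-free-ended parallelogram $C$ without producing unintended vertices in the intersection graph.

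The paper resolves exactly this point, and in the opposite order from yours: it builds only the seven distinguished scales (four trapezoids and three parallelograms) with all meeting angles equal to $\theta_{j+1}$ or $\theta_{2j-1}$, so that the $\theta_{2j-1}$ reflex angle between $F$ and $C$ forces exactly $j-2$ further scales; these are taken to be clique vertices, and the remaining $k-j$ clique vertices are then added as ridges on $F$. This is where the hypothesis $j\leq k$ enters (the $j-2$ forced scales must fit among the $k-2$ undistinguished clique vertices). Your argument has no step at which $j\leq k$ is used, which is a symptom of the forced-scale phenomenon having been lost. The pendant gadgets themselves (thin protrusions meeting the boundary at $\theta_{j+1}$) are fine and match the paper's trapezoids; the unproved core is the clique.
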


\begin{proof}
Again label $S_k$ as in Figure~\ref{seagulls}.  We construct a $j$-lizard $L$ with $j$-MSP graph $S_k$.  We start with four trapezoids and three parallelograms, arranged as in Figure~\ref{genseagullconstruction} on the left, which will be the scales $A$, $B$, $C$, $D$, $E$, $F$, and $G$, corresponding to vertices $a$, $b$, $c$, $d$, $e$, $f$, and $g$ in $S_k$.  The internal angles where these scales meet are all either $\theta_{j+1}$ or $\theta_{2j-1}$.  This yields a lizard with $j+5$ scales with corresponding $j$-MSP graph $S_j$, including the original seven scales $A$, $B$, $C$, $D$, $E$, $F$, and $G$, and an additional $j-2$ scales formed at the $\theta_{2j-1}$ angle between $F$ and $C$.

We then add $k-j$ additional scales as ridges on $F$, as shown on the right in Figure~\ref{genseagullconstruction}, for $j+5$ scales total.  These new scales correspond to vertices adjacent to every vertex in the clique, and the new $j$-lizard has $S_k$ as its $j$-MSP graph.
\end{proof}

\begin{figure}[H]
\begin{center}
\includegraphics[width=1\textwidth]{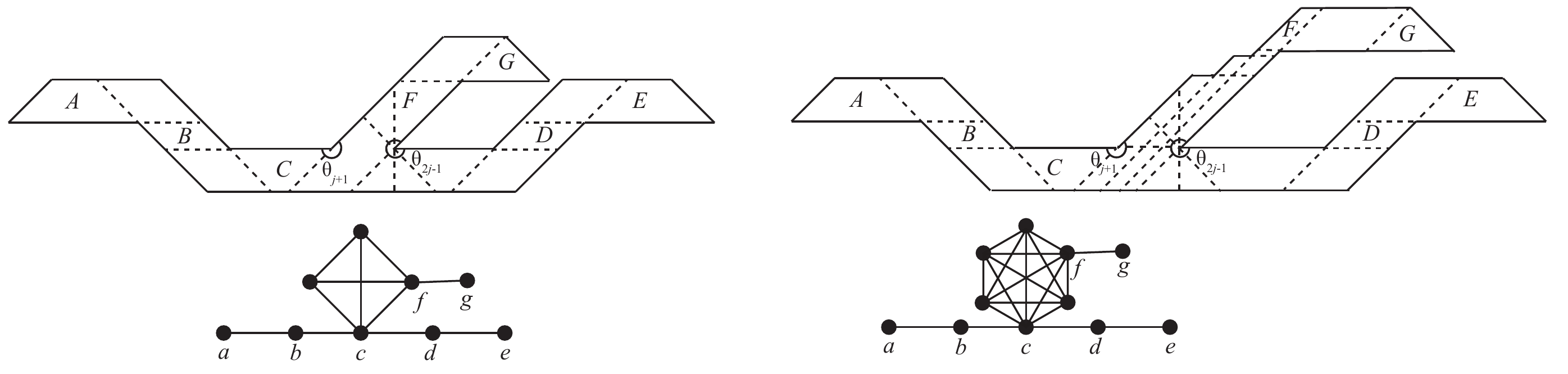}
\end{center}
\caption{The construction of $S_4$ and $S_6$ as 4-MSP graphs in the proof of Theorem~\ref{seagull theorem 2}.}
\label{genseagullconstruction}
\end{figure}

We define the family of \textbf{\textit{turtle graphs}}, $T_{n,k}$, to be the complete graph $K_n$ with $k$ additional pendant vertices, each adjacent to a distinct vertex of $K_n$.  The graph $T_{9,4}$ is shown on the left in Figure~\ref{turtle figure}.

\begin{theorem}
$T_{n,k}$ is a $k$-MSP graph for all $k>1$ and $n = \frac{1}{6}k^3 - \frac{7}{6}k + 3$.
\end{theorem}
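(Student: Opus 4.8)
The plan is to exhibit an explicit $k$-lizard $L$ — the one depicted in Figure~\ref{turtle figure} in the case $k=4$ — whose scales split into $n$ mutually overlapping \emph{clique scales} $A_1,\dots,A_n$ realizing $K_n$, together with $k$ \emph{pendant scales} $P_1,\dots,P_k$, where each $P_i$ meets only $A_i$. Since $n=\frac16k^3-\frac76k+3$ is a fixed polynomial in $k$ (with $n\ge k$, equality only at $k=2$, so the $k$ pendants can indeed attach to distinct clique vertices), the burden of the proof is not the existence of \emph{some} turtle but the construction of one producing \emph{exactly} this many clique scales, all pairwise intersecting, with at least $k$ of them exposed enough to carry a pendant.

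First I would construct the clique region. Following the spirit of the $K_n$, caterpillar, and seagull constructions, I would start from a convex base $k$-lizard (close to a regular $2k$-gon) and cut into its boundary a carefully chosen family of shallow notches whose reflex angles have measure $\theta_{k+1}$, so that by Lemma~\ref{proto-scales-lemma} no single notch corner spawns more than its intended two scales. The notches are positioned as a generalized staircase cycling through all $k$ allowed directions, arranged so that (i) no convex sub-polygon can dodge all of them, forcing every maximal convex sub-polygon to be a small region near the center, and (ii) all of these maximal sub-polygons contain one common small region, hence pairwise intersect and form $K_n$. The count of maximal convex sub-polygons is then governed by the local combinatorics at the $\Theta(k)$ reflex junctions of the staircase: applying Lemmas~\ref{proto-scales-lemma} and~\ref{two-proto-scales-lemma} junction by junction gives a contribution that grows quadratically with the "strength" of each junction, and tuning these strengths to decrease along the staircase makes the total telescope; I would check that the sum equals $\frac16k^3-\frac76k+3$ by setting up an explicit bijection between the scales and the combinatorial data of the construction (e.g.\ against triples of directions, up to a bounded number of degenerate exceptions).

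Second I would attach the pendants. The outermost notches of the staircase should leave at least $k$ of the clique scales meeting $\boundary(L)$ in an edge free of all other scales; on $k$ such scales $A_1,\dots,A_k$ I would cut a thin triangular flap meeting $A_i$ at two $\theta_{k+1}$ reflex corners, so that by Lemma~\ref{proto-scales-lemma} each flap yields exactly one new scale $P_i$ and $A_i\cup P_i$ contributes nothing further. Taking the flaps small and placing them on distinct sides guarantees that $P_i$ meets $A_i$ in their interiors and no other scale, so the resulting $k$-lizard has $n+k$ scales with $k$-MSP graph $T_{n,k}$.

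The step I expect to be the main obstacle is the exact enumeration: proving the construction produces \emph{precisely} $n=\frac16k^3-\frac76k+3$ maximal sub-polygons — no more and no fewer — and that no two of them coincide or nest. Equivalently, I must rule out all spurious scales and spurious adjacencies created by the interplay of nearby notches and by the pendant flaps. Here I would lean on the shape-control lemmas — Lemma~\ref{reflex lemma}, Lemma~\ref{parallel sides lemma}, Lemma~\ref{parallel sides corollary}, and Lemma~\ref{end regions lemma} — to pin down the boundary of each scale near each junction and to show that any extra convex sub-polygon would have to be contained in one already counted. By contrast, the $K_n$ adjacencies (the common central region) and the pendant adjacencies are routine once the clique region has been correctly analyzed.
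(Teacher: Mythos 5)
There is a genuine gap: your proposal never actually specifies a construction whose scale count can be verified to equal $n=\frac16k^3-\frac76k+3$, and the mechanism you sketch is internally inconsistent with producing that count. You propose cutting shallow notches whose reflex angles all have measure $\theta_{k+1}$, explicitly so that each notch corner "spawns no more than two scales" --- but a junction of strength $j=1$ contributes only $jm+1=2$ scales via Lemma~\ref{two-proto-scales-lemma}, so a staircase of $\Theta(k)$ such notches yields $\Theta(k)$ clique scales, not $\Theta(k^3)$. You then appeal to junction "strengths" that grow quadratically and "telescope" to the right total, but you never reconcile this with your choice of minimal reflex angles, nor do you exhibit the bijection you promise; you candidly defer the exact enumeration as "the main obstacle." Since the specific polynomial $n$ in the statement is not arbitrary but is forced by the construction, deferring the count is deferring the theorem.

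The paper's construction is quite different and makes the count immediate. It builds $k$ "legs" attached to a wide "body," with the internal angle between consecutive legs equal to $\theta_{2k-1}$ (maximal reflex, strength $k-1$), and with all of these reflex vertices $v_1,\dots,v_{k-1}$ lying on a single line. A scale reaching into the interior of the $i$-th intermediate leg must contain both $v_{i-1}$ and $v_i$, and Lemma~\ref{two-proto-scales-lemma} (really, the direction-pairing argument in its proof) gives exactly $i(k-i)$ such scales; summing, $2+\sum_{i=1}^{k-2}i(k-i)=\frac16k^3-\frac76k+3$, and all of these scales meet over the body. The $k$ pendants are then the "feet" at the ends of the legs, made long enough that each foot scale meets only one clique scale --- roughly the role your triangular flaps were meant to play. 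To repair your proposal you would need to replace the shallow-notch staircase with junctions of maximal reflex angle arranged on a common line (or some other configuration for which you can literally enumerate the maximal convex sub-polygons), at which point you would essentially be reproducing the paper's turtle.
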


\begin{figure}
\begin{center}
\includegraphics[height=3cm]{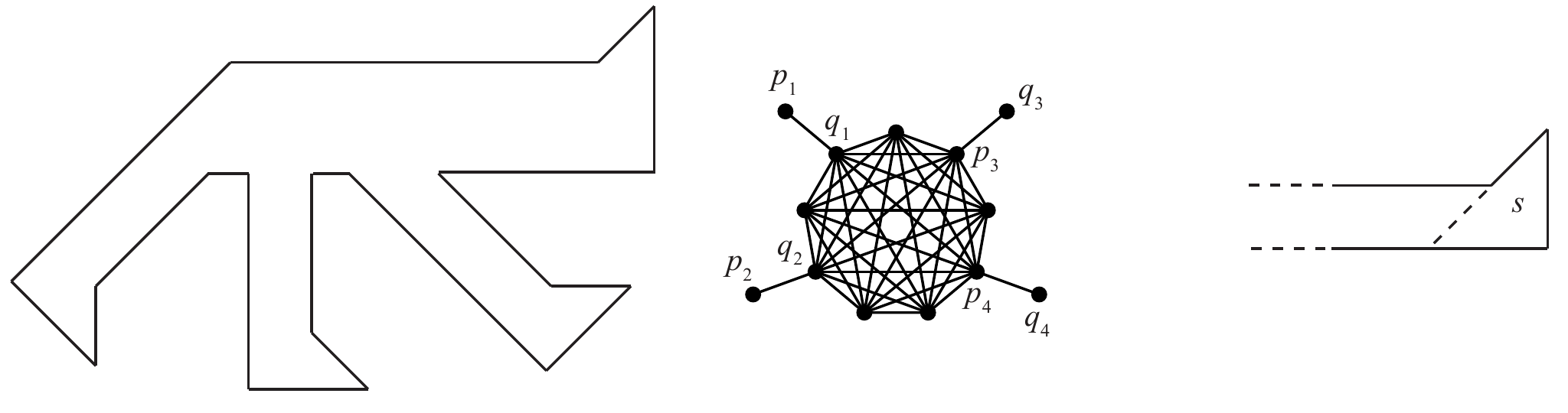}
\end{center}
\caption{On the left, a $4$-lizard with $4$-MSP graph $T_{9,4}$.  On the right, one leg of the turtle.}
\label{turtle figure}
\end{figure}

\begin{proof}
We construct a $k$-lizard $L$ with $k$-MSP graph $T_{n,k}$. We first construct a section of the $k$-lizard, called a leg, shown on the right in Figure~\ref{turtle figure}. We make each leg long enough so the scale $s$ at the end intersects only one other scale.  We use one wide leg as the body of the turtle, and attach $k-1$ additional legs to it so that the internal angle between each pair of adjacent legs is $\theta_{2k-1}$, and these angles fall on the same line, as shown in Figure~\ref{turtle figure}.  We also make the body leg wide enough so that the scales formed by the other legs all intersect.  Note that the internal angle between the first leg and the body is $\theta_{k+1}$. Then the first and last leg each create one scale in the clique. For an arbitrary leg $l_i$ between the first and last legs, any scale which contains part of the interior of $l_i$ disjoint from the body of the turtle must include the reflex vertices $v_{i-1}$ and $v_{i}$.  By Lemma~\ref{two-proto-scales-lemma}, there are $i(k-i)$ scales of this form. Hence, in total, there are
$$ 2+\sum_{i=1}^{k-2} i(k-i) = \frac{1}{6}k^3-\frac{7}{6}k+3=n$$ of these scales.  Since they all intersect, they form a clique of that size in the graph. The $k$ feet correspond to $k$ pendant vertices in the graph, so the lizard has $k$-MSP graph $T_{n,k}$ as claimed.
\end{proof}

\begin{theorem}
$T_{n,k+1}$ is not a $k$-MSP graph for any positive integers $n$ and $k$. \label{turtle-theorem}
\end{theorem}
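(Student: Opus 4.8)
The plan is to suppose for contradiction that some $k$-lizard $L$ has $k$-MSP graph $T_{n,k+1}$ — note that $n\geq k+1$, since the $k+1$ pendants attach to distinct clique vertices — and then to show that only $k$ allowed directions cannot accommodate $k+1$ pendants; morally, the turtle $T_{n,k}$ needed one direction per leg, so a $(k+1)$st ``leg'' has nowhere to point. Write $A_1,\dots,A_{k+1}$ for the clique scales that carry the pendant scales $P_1,\dots,P_{k+1}$, so that the unique neighbour of the degree-$1$ vertex $p_i$ is $a_i$.

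First I would pin down how each pendant sits on its clique scale. Since $a_i$ is the only neighbour of $p_i$, no other vertex is adjacent to both $p_i$ and $a_i$, so for any other clique vertex $a_j$ the path $p_i a_i a_j$ satisfies the hypothesis of Lemma~\ref{parallel sides lemma}; hence the two sides of $A_i$ through the points of $\boundary(A_i)\cap\boundary(P_i)$ are parallel, with some common allowed direction $d_i$. Because $\interior(A_i\cap P_i)\subseteq\interior(P_i)$ and $P_i$ meets no scale other than $A_i$, the region $A_i\cap P_i$ meets no other scale, and (as in the proof of Lemma~\ref{end regions lemma}) $A_i - P_i$ is connected, so $A_i\cap P_i$ is an end $2$-region of $A_i$. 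Combining these, $A_i$ lies in the strip $S_i$ bounded by its two parallel $d_i$-sides, reaches both walls of $S_i$, and $P_i$ protrudes beyond $A_i$ off one of the two ends of $A_i$ along $S_i$; in particular, along any line parallel to $d_i$ strictly between the walls of $S_i$, the endpoint of its intersection with $A_i$ on the $P_i$-side lies in $\interior(P_i)$.

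Next, since $d_1,\dots,d_{k+1}$ all lie in $\{\theta_0,\dots,\theta_{k-1}\}$, two of them coincide; say $d_1=d_2$. Pick $q\in\interior(A_1\cap A_2)$ (nonempty since $a_1a_2$ is a clique edge) and work on the line $\ell$ through $q$ parallel to $d_1$. On $\ell$ the scales $A_1,A_2,P_1,P_2$ cut out intervals, and by the previous paragraph the end of $A_i\cap\ell$ nearest $P_i$ lies in $\interior(P_i)$, hence — since $p_i$ is adjacent to neither $a_{3-i}$ nor $p_{3-i}$ — misses $\interior(A_{3-i})$ and $\interior(P_{3-i})$. If $P_1$ and $P_2$ protrude toward the same side of $\ell$, these interval conditions together with $\interior(A_1\cap A_2)\neq\emptyset$ are directly contradictory. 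If they protrude toward opposite sides, then the sub-segment of $\ell$ running from a point of $\interior(A_2)$, through $\interior(A_1)$, and into $\interior(P_1)$ stays inside $L$ and meets $\boundary(L)$, if at all, only in a single edge or vertex, so by Lemma~\ref{Convex Subset Lemma} it lies in a scale $D$. Then $D$ meets $\interior(A_1)$, $\interior(A_2)$, and $\interior(P_1)$, and since $D$ contains points outside both $A_1$ and $P_1$ we have $D\neq A_1$ and $D\neq P_1$; thus $d$ is a neighbour of $p_i$ other than $a_i$ in $T_{n,k+1}$, which is impossible. Hence no such $L$ exists.

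The step I expect to be the main obstacle is the one just sketched, in the degenerate case where the two points of $\boundary(A_i)\cap\boundary(P_i)$ happen to lie on a single side of $A_i$: there Lemma~\ref{parallel sides lemma} yields no strip structure for $A_i$, and one cannot freely choose a line that enters $\interior(P_i)$. (Tracing through the proof of Lemma~\ref{parallel sides lemma}, this collapse is forced only when $k=2$, so for $k>2$ the two incident sides are genuinely distinct and parallel.) In that case one should instead argue, via the reflex-angle scale counts of Lemmas~\ref{proto-scales-lemma} and~\ref{two-proto-scales-lemma} applied at the vertices of $L$ produced by the attachment, that an extra scale intersecting three of $A_i,A_{3-i},P_i,P_{3-i}$ is unavoidable. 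Choosing the correct allowed direction in which to shoot the segment so that it genuinely enters $\interior(P_i)$ while remaining in $L$ is the part requiring the most care, and will likely also use Lemma~\ref{parallel sides corollary} to control the shape of $A_i\cap P_i$.
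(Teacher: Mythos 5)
Your proposal is correct and takes essentially the same route as the paper's proof: apply Lemma~\ref{parallel sides lemma} to each pendant path to get a pair of parallel sides of the clique scale in an allowed direction, pigeonhole the $k+1$ resulting directions into the $k$ allowed ones, and then run a line in the repeated direction through a point of the intersection of the two offending clique scales to produce a scale giving a forbidden adjacency. The paper phrases the endgame slightly differently (via the private regions $R_i$ and a scale meeting both pendant scales $P_a$ and $P_b$, rather than your two protrusion sub-cases), but the decomposition and the key lemmas are the same.
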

\begin{proof}
Assume by way of contradiction that $L$ is a $k$-lizard whose $k$-MSP graph is $T_{n,k+1}$.  Label the pendant vertices of $T_{n,k+1}$ by $p_1,...,p_{k+1}$, and their respective adjacent vertices $q_1,...,q_{k+1}$.  Label the corresponding scales in $L$ by $P_1,...,P_{k+1}$, and $Q_1,...,Q_{k+1}$.  For all $1 \leq i \leq k+1$, let $R_i$ be the region of $Q_i$ disjoint from any other scale in $L$.  Since $p_1,...,p_{k+1}$ are pendant vertices, we may assume $R_1,...,R_{k+1}$ have non-empty interiors.

For all $1\leq i \leq k+1$ and some $j \neq i$, the vertices $p_i$, $q_i$, and $q_j$ satisfy the hypotheses of Lemma~\ref{parallel sides lemma}.  So by that lemma, the sides of $Q_i$ incident to $P_i$ are parallel.  By construction, these are sides of $R_i$.  Since $R_i$ is the region of $Q_i$ in no other scale of $L$, these parallel sides are also sides of $L$.

As this is the case for all $1 \leq i \leq k+1$, we have $k+1$ pairs of parallel sides of $L$. These pairs have directions in the set $\theta_1,...,\theta_k$, so by the pigeonhole principle, at least two pairs have the same direction. Let $Q_a$ and $Q_b$ be the scales containing these pairs, and let $s_1$ and $s_2$ be the parallel sides of scale $Q_a$ and $s_3$ and $s_4$ be the parallel sides of $Q_b$.  Assume without loss of generality that these sides are vertical, and consider their corresponding $x$-coordinates $x_1$, $x_2$, $x_3$, and $x_4$.  Again assume without loss of generality that $x_1<x_2$, $x_3<x_4$, and $x_1 \leq x_3$.

\noindent \textbf{Case 1. $x_2 \leq x_3$.}  In this case, the interiors of $Q_a$ and $Q_b$ don't intersect in $L$, which contradicts the construction of $T_{n,k+1}$.

\begin{figure}
\begin{center}
\includegraphics[height=4cm]{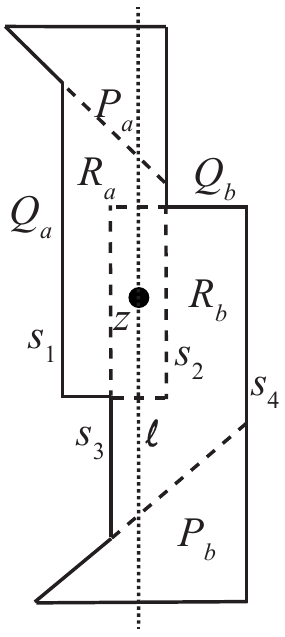}
\end{center}
\caption{The line $\ell$ in Case 2 of the proof of Theorem~\ref{turtle-theorem}.}
\label{turtle case}
\end{figure}

\noindent \textbf{Case 2. $x_3<x_2$.}  In this case, consider a point $z \in \interior(Q_a \cap Q_b)$.  Since $Q_a$ and $Q_b$ are convex, $z$ has $x$-coordinate between $x_3$ and $x_2$.  Again since $Q_a$ and $Q_b$ are convex, by Lemma~\ref{Ray Lemma} the vertical line $\ell$ through $z$ intersects the top and bottom of $R_a$ and $R_b$, as shown in Figure~\ref{turtle case}.  By Lemma~\ref{parallel sides lemma}, one of the top or bottom of $R_a$ borders $P_a$, and one of the top or bottom of $R_b$ borders $P_b$.  Then there is a scale containing a segment of $\ell$ that intersects $P_a$ and $P_b$, contradicting the construction of $L$.
\end{proof}

We can now combine $S_k$ and $T_{k,n}$ to construct a $k$-MSP graph which isn't a $j$-MSP graph for any $j\not=k$. We define the seagull-turtle graph $ST_{n,k}$ be the graph with vertex set $V(S_k) \cup V(T_{n,k}) \cup \{x\}$ for an additional vertex $x$, and with edge set $E(S_k) \cup E(T_{n,k}) \cup \{ex,xp_1\}$, using the labeling of Figure~\ref{genseagullconstruction} and Figure~\ref{turtle figure}.  The graph $ST_4$ is shown in Figure~\ref{seagull turtle figure}.

\begin{figure}
\begin{center}
\includegraphics[height=6cm]{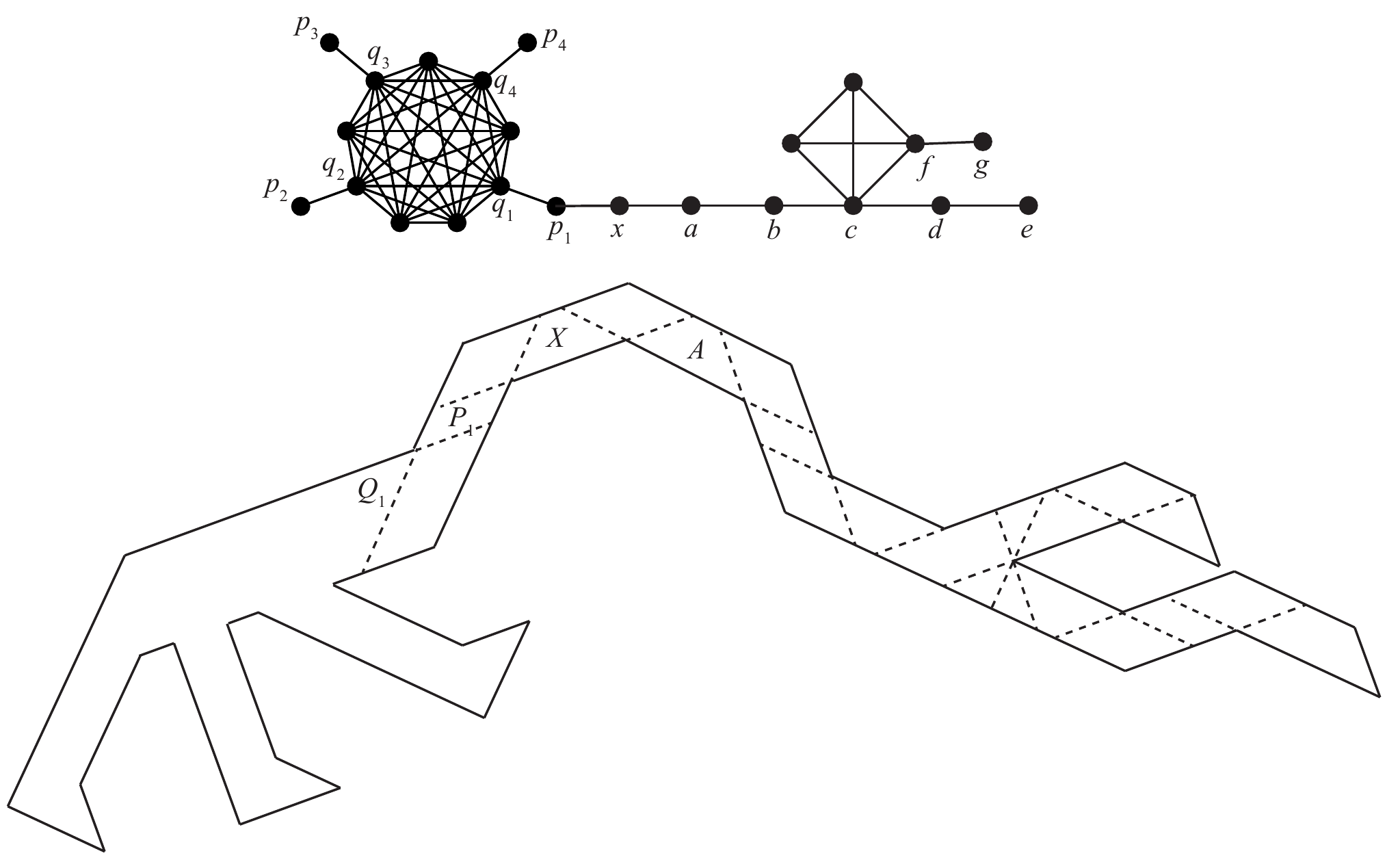}
\end{center}
\caption{The graph $ST_4$ and a $4$-lizard that has $ST_4$ as its $4$-MSP graph.}
\label{seagull turtle figure}
\end{figure}

\begin{lemma} \label{combining lemma}
For two graphs $G$ and $H$ and two vertices $x \in V(G)$ and $y \in V(H)$, let $J$ be the graph with vertex set $V(G) \cup V(H) \cup \{z\}$, and edge set $E(G) \cup E(H) \cup \{xz, zy\}$.  If $J$ is a $k$-MSP graph then $G$ and $H$ are $k$-MSP graphs.
\end{lemma}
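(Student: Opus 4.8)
The plan is to show that $G$ is a $k$-MSP graph; the case of $H$ is symmetric. Let $L$ be a $k$-lizard whose $k$-MSP graph is $J$, and let $X$, $Y$, $Z$ be the scales of $L$ corresponding to the vertices $x$, $y$, $z$. Since $z$ is adjacent in $J$ only to $x$ and to $y$, and $xy\notin E(J)$ (there are no edges between $V(G)$ and $V(H)$), the triple $xzy$ is an induced path in $J$ in which no other vertex is adjacent to both $x$ and $z$ or to both $z$ and $y$. Hence the hypotheses of Lemma~\ref{parallel sides lemma} and Lemma~\ref{parallel sides corollary} are met, and the sides of $Z$ incident with $X$ coincide with the sides of $Z$ incident with $Y$: a single pair of distinct parallel sides $s_1,s_2$ of $Z$.

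Next I would pin down the local geometry around $Z$. Because the two points of $\boundary(X)\cap\boundary(Z)$ lie on the two \emph{distinct} parallel sides $s_1$ and $s_2$, the scale $X$ must cross $Z$ from $s_1$ to $s_2$, and likewise $Y$; since $\interior(X)\cap\interior(Y)=\emptyset$, these two crossings are disjoint. I would then show $\boundary(X)\cap\interior(Z)=\boundary(Y)\cap\interior(Z)=:\gamma$, a polyline of allowed-direction segments running from $s_1$ to $s_2$ that partitions $Z$ into the pieces $X\cap Z$ and $Y\cap Z$. The one substantive point here is that $Z\setminus(X\cup Y)$ has empty interior: otherwise, thickening a short allowed-direction segment inside it (Lemma~\ref{Convex Subset Lemma}) produces a scale whose interior meets $\interior(Z)$, forcing that scale to be $X$ or $Y$, a contradiction. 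In particular $Z\subseteq X\cup Y$.

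With $\gamma$ in hand I would cut $L$. Extend $\gamma$ along $\boundary(X)$ — which coincides with $\boundary(Y)$ for as long as the two boundaries agree — to a maximal common polyline $\Gamma\subseteq\boundary(X)\cap\boundary(Y)$, and then continue to $\boundary(L)$ so as to produce a simple polyline $\widehat\Gamma$ built of allowed-direction segments that cuts $L$ into two $k$-lizards $L_G\ni X$ and $L_H\ni Y$ with the piece $X\cap Z$ of $Z$ on the $L_G$ side. I would then verify: (i) no $G$-scale crosses $\widehat\Gamma$, since crossing $\Gamma$ would push its interior into $\interior(Y)$, impossible because no vertex of $G$ is adjacent to $y$ — so every $G$-scale lies in $L_G$; (ii) each $G$-scale, being a maximal convex $k$-lizard in $L$, is also maximal in the smaller region $L_G$, hence is a scale of $L_G$; (iii) conversely every scale of $L_G$, being a convex $k$-lizard in $L$, lies in some scale $W$ of $L$, and if $W=Z$ then $W\cap L_G=X\cap Z\subseteq X$ by construction of the cut, while if $W$ is an $H$-scale then $W\cap L_G$ is nowhere dense (the interiors of all $G$-scales miss $\interior(W)$), so in either case that scale is already a $G$-scale — therefore the scales of $L_G$ are exactly the $G$-scales; (iv) interior intersections among $G$-scales are unchanged, and $G$ is the subgraph of $J$ induced on $V(G)$, so the $k$-MSP graph of $L_G$ is $G$. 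The symmetric argument gives $H$.

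The main obstacle is the cutting step: extending $\Gamma$ to a polyline $\widehat\Gamma$ that stays in allowed directions, reaches $\boundary(L)$ at both ends, and — most delicately — separates \emph{all} of the $G$-scales from \emph{all} of the $H$-scales rather than merely $X$ from $Y$. My approach would be to examine the point where $\boundary(X)$ and $\boundary(Y)$ first diverge: the wedge there opening away from $\Gamma$ lies outside both $X$ and $Y$, so it contains no scale adjacent to $X$ or to $Y$; iterating, one threads $\widehat\Gamma$ through the corridor between the $G$-scales and the $H$-scales (whose interiors are pairwise disjoint), and a connectivity argument — $H$ is joined to the rest of $J$ only through $z$ — confines every $H$-scale to the $L_H$ side. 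Once the cut is justified, the scale count in the previous paragraph is routine bookkeeping.
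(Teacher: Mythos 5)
There is a genuine gap, and it is located exactly where you place the weight of the argument. Your central structural claim --- that $Z\setminus(X\cup Y)$ has empty interior, hence $Z\subseteq X\cup Y$ and $\boundary(X)$, $\boundary(Y)$ share a common polyline $\gamma$ crossing $Z$ --- is false, and the justification offered for it does not work: the scale guaranteed by Lemma~\ref{Convex Subset Lemma} for a little parallelogram inside $Z\setminus(X\cup Y)$ can simply be $Z$ itself, so no contradiction arises. Indeed $Z$ typically \emph{does} have a region of its interior met by no other scale: picture a long parallelogram $Z$ with $X$ notched into one end and $Y$ notched into the other (this is precisely the shape of the connector in the paper's representation in Figure~\ref{seagull turtle figure}, and of the scales in the tree construction of Figure~\ref{tree figure}). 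In that situation $X$ and $Y$ are disjoint, $\boundary(X)\cap\boundary(Y)=\emptyset$, and the polyline $\Gamma$ you propose to extend into a cut $\widehat\Gamma$ does not exist. Since the entire remainder of your argument --- which you yourself identify as the main obstacle and leave as a sketch --- is built on threading a separating curve through that (possibly empty) common boundary, the proof does not go through.

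For comparison, the paper avoids any global separating curve. After the same opening move you make (applying Lemma~\ref{parallel sides lemma} to the path through $z$), it uses Lemma~\ref{two-proto-scales-lemma} to conclude that $X\cup Z$ has at most one reflex angle, so $X\cap Z$ is an \emph{end $2$-region} of $X$; it then forms $M$ by deleting from $L$ all scales corresponding to $V(H)\cup\{z\}$ and checks that $M$ is a $k$-lizard whose scales are exactly the $G$-scales, with $X$ replaced by $X\cap M$ (still a scale because only an end region was removed, and no other deleted scale meets $M$). If you want to salvage your write-up, the repair is to drop the claim $Z\subseteq X\cup Y$ and instead establish the end-$2$-region property of $X\cap Z$, then excise the $H$-side scales rather than cut along a shared boundary of $X$ and $Y$.
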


\begin{proof}
Assume $J$ is a $k$-MSP graph, and let $L$ be a $k$-lizard with $k$-MSP graph $J$.  Let $X$ and $Z$ be the scales of $L$ corresponding to the vertices $x$ and $z$ in $J$.  By Lemma~\ref{parallel sides lemma}, the sides of $X$ incident to $Z$ are parallel, and the sides of $Z$ incident to $X$ are parallel.  Since there are no other scales of $L$ intersecting both $X$ and $Z$, by Lemma~\ref{two-proto-scales-lemma} $X \cup Z$ has at most one reflex angle.  Therefore $Z \cap X$ must be an end 2-region of $X$.  We may assume $X$ has a region in its interior disjoint from all other scales of $L$.

Consider the polygon $M$ formed by deleting the scales corresponding to vertices of $H \cup \{z\}$ from $L$, and let $X'=X \cap M$.  Since $Z$ is an end 2-region of $X$, and no other deleted scale intersects $M$,
$M$ is a $k$-lizard and $X'$ is a scale of $M$.  Furthermore, all other scales corresponding to $G$ in $L$ remain the same in $M$, so the $k$-MSP graph of $M$ is $G$.  A similar argument shows that $H$ is also a $k$-MSP graph.
\end{proof}

\begin{theorem}
Let $n=\frac{1}{6}k^3 - \frac{7}{6}k + 3$.  The seagull-turtle graph $ST_{n,k}$ is a $k$-MSP graph but not a $j$-MSP graph for $j\not= k$.
\end{theorem}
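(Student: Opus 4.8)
The plan is to prove the two halves separately: the positive direction by an explicit construction that glues together the lizards already built for $S_k$ and $T_{n,k}$, and the negative direction by feeding $ST_{n,k}$ into Lemma~\ref{combining lemma} and then quoting the non-representability results for the two pieces.

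\medskip
\noindent\textbf{Positive direction.}
Start from a $k$-lizard $L_S$ with $k$-MSP graph $S_k$ (Theorem~\ref{seagull theorem 2} with $j=k$) and a $k$-lizard $L_T$ with $k$-MSP graph $T_{n,k}$, which exists for the stated value of $n$ by the turtle theorem. Since $e$ is a leaf of $S_k$, its scale $E$ in $L_S$ meets only $D$, so a positive-length portion of $\boundary(E)$ lies on $\boundary(L_S)$; similarly the scale $P_1$ in $L_T$ meets only $Q_1$, so part of $\boundary(P_1)$ lies on $\boundary(L_T)$. After rescaling and translating $L_S$ and rotating it by a multiple of $\pi/k$ (which permutes, hence preserves, the set of allowed directions), place $L_S$ near $L_T$ so that a free edge of $E$ is parallel to and faces a free edge of $P_1$, then join the two lizards by a thin parallelogram ``bridge'' whose long sides are parallel to those edges and whose short sides lie in another allowed direction, attached at interior angles $\theta_{k+1}$. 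Exactly as in the Y-shaped and notch constructions in the proofs of Theorems~\ref{2 tree theorem} and~\ref{caterpillar theorem}, this keeps $E$ and $P_1$ maximal, introduces a single new scale $X$ whose interior meets only the interiors of $E$ and $P_1$, and creates no other scales. The resulting $k$-lizard then has $k$-MSP graph exactly $ST_{n,k}$; see Figure~\ref{seagull turtle figure} for the case $k=4$.

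\medskip
\noindent\textbf{Negative direction.}
Suppose $j\neq k$ (necessarily $j\geq 2$) and, for contradiction, that $ST_{n,k}$ is a $j$-MSP graph. Since $ST_{n,k}$ is obtained from $S_k$ and $T_{n,k}$ by adding the vertex $x$ together with the edges $ex$ and $xp_1$, Lemma~\ref{combining lemma} forces both $S_k$ and $T_{n,k}$ to be $j$-MSP graphs. If $j>k$, this contradicts Theorem~\ref{seagull-theorem}. If $j<k$ (so $k\geq 3$), then $T_{n,k}$ cannot be a $j$-MSP graph: the proof of Theorem~\ref{turtle-theorem} invokes its hypothesis only at the pigeonhole step, where having more pendant vertices than allowed directions produces two distinct clique scales $Q_a,Q_b$ whose pairs of parallel exterior sides share a direction, after which Cases~1 and~2 of that proof derive a contradiction without using the exact count. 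Since $T_{n,k}$ has $k>j$ pendants attached to distinct vertices of a clique of size $n\geq k\geq 2$ and a $j$-lizard admits only $j$ directions, that argument applies verbatim. Either way $ST_{n,k}$ is not a $j$-MSP graph.

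\medskip
The step requiring the most care is the bridged construction: one must verify that the bridge scale $X$ does not enlarge $E$, $P_1$, or itself into a larger convex sub-polygon, does not create auxiliary scales, and can genuinely be realized as a $k$-lizard after repositioning $L_S$ relative to $L_T$ — though this is routine given the analogous constructions earlier in the paper. The only other point needing comment is the observation that Theorem~\ref{turtle-theorem}'s argument is really about the comparison between the number of pendants and the number of allowed directions, not about the specific value $k+1$.
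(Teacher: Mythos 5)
Your proof takes the same route as the paper: Lemma~\ref{combining lemma} reduces the negative direction to the non-representability of $S_k$ and $T_{n,k}$, and the positive direction is the glued construction of Figure~\ref{seagull turtle figure}. In fact your version is more careful on two points where the paper is loose: you correctly assign Theorem~\ref{seagull-theorem} to the case $j>k$ and the turtle argument to $j<k$ (the paper's proof states these the other way around, which as written would contradict Theorem~\ref{seagull theorem 2}), and you note explicitly that Theorem~\ref{turtle-theorem} as stated only rules out $j=k-1$, so its pigeonhole argument must be rerun with $k$ pendants against $j<k$ allowed directions to cover all smaller $j$.
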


\begin{proof}
By Lemma~\ref{combining lemma}, since $S_k$ is not a $j$-MSP graph for $j<k$, $ST_{k,n}$ is not a $j$-MSP graph for $j<k$.  Again by Lemma~\ref{combining lemma}, since $T_{n,k}$ is not a $j$-MSP graph for $j>k$, $ST_{n,k}$ is not a $j$-MSP graph for $j>k$.  Finally, we construct a $k$-lizard with $k$-MSP graph $ST_{n,k}$ by combining the $k$-MSP representations of $S_k$ and $T_{n,k}$, as shown in Figure~\ref{seagull turtle figure}.
\end{proof}

\begin{corollary}
For integers $k>1$, $j>1$, and $j \neq k$, there is a $j$-MSP graph that is not a $k$-MSP graph.
\end{corollary}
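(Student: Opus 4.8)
The plan is to read this off from the preceding theorem, which already does all the work; the corollary is just that theorem stated with the two indices interchanged and with the ``not a $k$-MSP graph'' half emphasized. So given integers $k>1$, $j>1$ with $j \neq k$, I want to produce a concrete graph that is a $j$-MSP graph but not a $k$-MSP graph.

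First I would invoke the theorem on the seagull-turtle graph with parameter $j$ rather than $k$: set $n=\frac{1}{6}j^3-\frac{7}{6}j+3$ and consider $ST_{n,j}$. The theorem (with $j$ playing the role its statement assigns to $k$) asserts that $ST_{n,j}$ is a $j$-MSP graph and is not an $\ell$-MSP graph for any $\ell\neq j$. Then I would simply specialize $\ell=k$: since $k\neq j$ by hypothesis, $ST_{n,j}$ is not a $k$-MSP graph, so $ST_{n,j}$ is exactly the graph the corollary demands. The hypotheses $j>1$ and $k>1$ are precisely what the theorem needs (it builds on the constructions of Theorem~\ref{seagull theorem 2} and the turtle construction, and on the non-representability results Theorems~\ref{seagull-theorem} and~\ref{turtle-theorem}, all of which require $k,j>1$), so nothing extra must be checked.

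I do not expect any obstacle: every geometric and combinatorial ingredient — Lemma~\ref{combining lemma}, Theorem~\ref{seagull-theorem}, Theorem~\ref{turtle-theorem}, and the $k$-MSP construction of $ST_{n,k}$ — is already in place, and the corollary is a one-line consequence. If desired I would add a closing remark that, taken together with the theorem itself, this shows that for every pair of distinct integers $k,j>1$ neither of the classes of $k$-MSP graphs and $j$-MSP graphs contains the other, i.e.\ these graph families are pairwise incomparable under inclusion.
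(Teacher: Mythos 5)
Your proposal is correct and matches the paper's intent exactly: the corollary is an immediate consequence of the preceding theorem applied with the roles of $j$ and $k$ interchanged, taking $ST_{n,j}$ with $n=\frac{1}{6}j^3-\frac{7}{6}j+3$ as the witness. The paper offers no separate proof, treating this as the same one-line specialization you describe.
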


The Venn diagram shown in Figure~\ref{venn results} summarizes the results of this section.  The one empty region, a graph that is $(k-1)$-MSP and $(k+1)$-MSP but not $k$-MSP, is still an open question.

\begin{figure}
\begin{center}
\includegraphics[height=5cm]{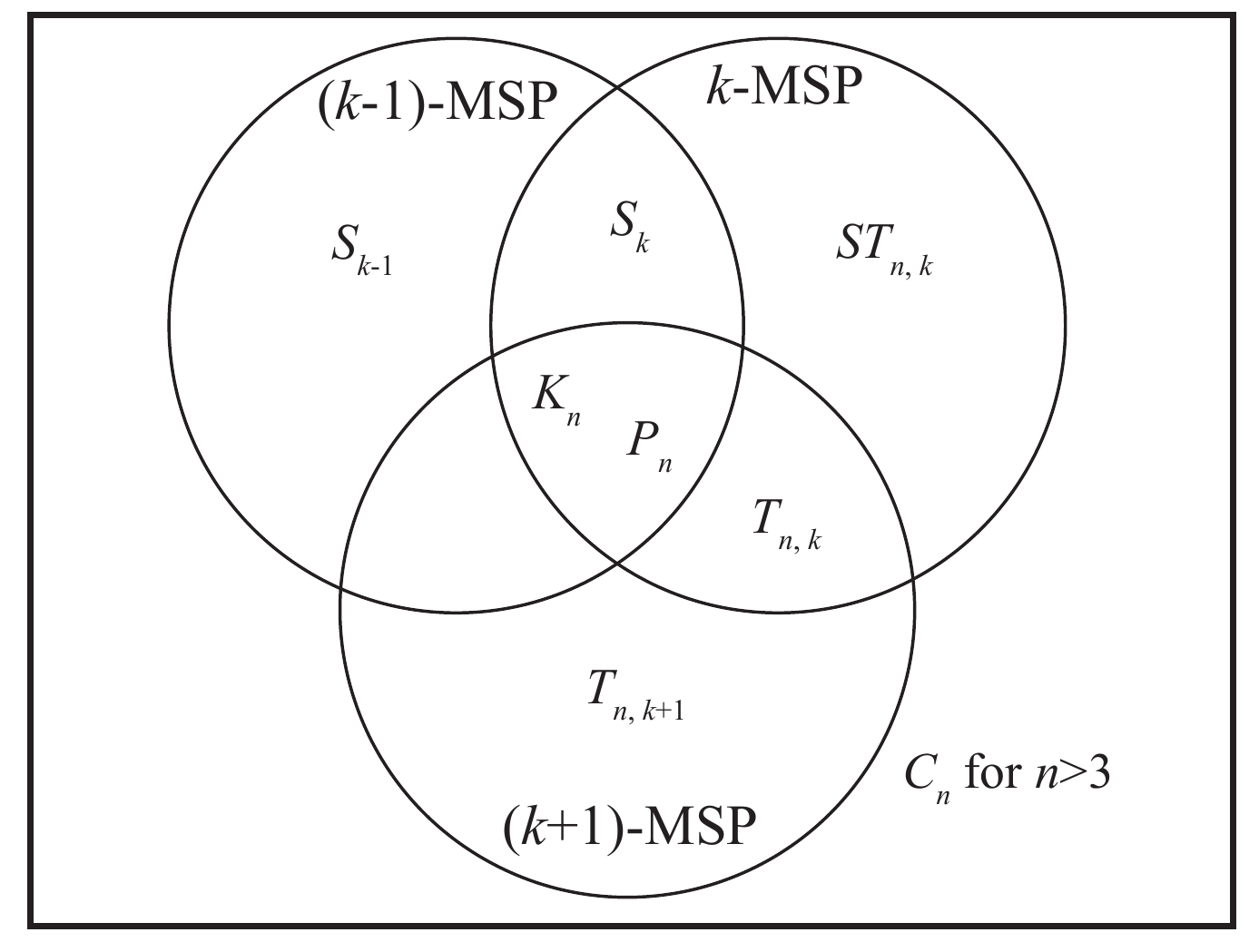}
\end{center}
\caption{A summary of the results of Section~\ref{separating examples}.}
\label{venn results}
\end{figure}

\section{Open Questions} \label{open questions section}

We conclude the paper with a list of open questions.

\begin{enumerate}
\item Are there 3-MSP graphs with induced cycles larger than 12, or 4-MSP graphs with induced cycles larger than 16?
\item Are there graphs which are $i$-MSP and $k$-MSP graphs but not $j$-MSP graphs for $i<j<k$?
\item In Lemma~\ref{combining lemma} we prove that in some instances induced subgraphs of $k$-MSP graphs are also $k$-MSP graphs, but this is not always the case, since for example large cycles are not $5$-MSP graphs but are induced subgraphs of $5$-MSP graphs by the construction in Figure~\ref{large induced cycles figure}.  Are there other conditions which determine when induced subgraphs of $k$-MSP graphs are also $k$-MSP graphs?
\item Our motivation for considering $k$-lizards was the $k$-snakes first defined in \cite{church2008snakes}, which have integer side lengths.  How would the results in this paper change with $k$-snake representations instead of $k$-lizard representations?  We might consider $k$-snakes with $k$-snake scales or $k$-snakes with $k$-lizard scales.  In particular, is there a $k$-MSP graph which is not representable with a $k$-snake? For a given $k$-lizard, it seems challenging to replace it with a $k$-snake representing the same graph.  Can this be done algorithmically?
\item If we allow $k$-lizards to have holes, most cycles are representable as $k$-MSP graphs.  What other graphs are representable in this way?
\item What if we allow intersections along boundaries of scales to count as edges in the graph?
\item What if we require additional restrictions on $k$-lizards?  For example, what graphs are representable with $k$-lizards with $j$ reflex angles for a given positive integer $j$?
\end{enumerate}

\section{Acknowledgements}

We thank Erin McNicholas and Richard Moy for their generous contributions of time and energy.

This project was funded by NSF grant DMS 1460982 as part of the Willamette Mathematics Consortium Research Experiences for Undergraduates program.

\bibliographystyle{plain}
\bibliography{1references}
\end{document}